\documentclass{amsart}

\usepackage{amssymb}
\usepackage{amsmath}
\usepackage{amscd}
\usepackage{ascmac}
\usepackage{graphicx}
\usepackage{pinlabel}

\theoremstyle{plain}
\newtheorem{thm}{Theorem}[section]

\newtheorem{lem}[thm]{Lemma}
\newtheorem{prop}[thm]{Proposition}

\theoremstyle{definition}
\newtheorem{defn}[thm]{Definition}
\newtheorem{exmp}[thm]{Example}

\theoremstyle{remark}
\newtheorem{rem}[thm]{Remark}

\def\co{\colon\thinspace}

\begin{document}

\title[Counting Dirac braid relators]
 {Counting Dirac braid relators and hyperelliptic Lefschetz fibrations} 
\author[H.~Endo]{Hisaaki Endo}
\address{Department of Mathematics\\Tokyo Institute of Technology\\
2-12-1 Oh-okayama\\Meguro-ku\\Tokyo 152-8551\\Japan}
\email{endo@math.titech.ac.jp}
\author[S.~Kamada]{Seiichi Kamada}
\address{Department of Mathematics\\Osaka City University\\
3-3-138 Sugimoto\\Sumiyoshi-ku\\Osaka 558-8585\\Japan}
\email{skamada@sci.osaka-cu.ac.jp}
\keywords{4-manifold, mapping class group, 
Lefschetz fibration, chart, Dirac braid, hyperelliptic fibration}
\date{August 3, 2015; MSC 2000: primary 57M15, secondary 57N13}

\maketitle

\begin{abstract}
We define an invariant $w$ for hyperelliptic Lefschetz fibrations over closed oriented surfaces, 
which counts the number of Dirac braids included intrinsically in the monodromy, 
by using chart description introduced by the second author. 
As an application, we prove that two hyperelliptic Lefschetz fibrations of genus $g$ 
over a given base space 
are stably isomorphic if and only if they have the same numbers of singular fibers of each type 
and they have the same value of $w$ if $g$ is odd. 
We also give examples of pair of hyperelliptic Lefschetz fibrations with the same numbers of 
singular fibers of each type which are not stably isomorphic. 
\end{abstract}


\section{Introduction}


Since the seminal works of Donaldson \cite{Donaldson1999} 
and Gompf \cite{GS1999} in 1998, 
Lefschetz fibrations have been investigated from various viewpoints. 
Several kinds of equivalence classes of a Lefschetz fibration 
such as isomorphism class, diffeomorphism type, homeomorphism type, 
and homotopy type, 
and their relations have been studied by many authors. 
The above work of Donaldson together with a theorem of Gompf  \cite{Gompf1995} 
implies that there exists a Lefschetz fibration with prescribed fundamental group, 
which means the classification of all homotopy types of Lefschetz fibrations is not possible 
in principle. 
Baykur \cite{Baykur2016} proved that any symplectic $4$-manifold which is not a rational 
or ruled surface, after sufficiently many blow-ups, admits an arbitrary number of 
nonisomorphic Lefschetz fibrations of the same genus 
(see also Park and Yun \cite{PY2009}, \cite{PY2015}).  
It clarified a significant difference between isomorphism classes and diffeomorphism types. 
Two Lefschetz fibrations of the same genus over a given base space 
are called stably isomorphic if they become isomorphic after fiber-summed with 
the same number of copies of a `universal' Lefschetz fibration. 
Auroux \cite{Auroux2005} obtained a sufficient condition for two Lefschetz fibrations 
over the $2$-sphere each of which admits a section to be stably isomorphic. 
Hasegawa, Tanaka, and the authors \cite{EHKT2014} relaxed Auroux's condition to 
obtain a necessary and sufficient condition 
for two Lefschetz fibrations over a closed surface to be stably isomorphic. 
Thus the stable isomorphism problem, in contrast to the isomorphism one, 
turned out to be within reach. 

Hyperelliptic Lefschetz fibrations are Lefschetz fibrations for which the image of the 
monodromy is included in the hyperelliptic mapping class group. 
They are considered to be a natural generalization of elliptic surfaces 
because several properties are common to these two kinds of fibrations. 
For instance, many of fibrations can be obtained by branched covering construction, 
the signature of a fibration localizes on the singular fibers, 
typical fibrations are used as building blocks for constructions of more complicated fibrations 
and $4$--manifolds, etc (see Siebert and Tian \cite{ST1999}, 
Fuller \cite{Fuller2000}, Endo \cite{Endo2000}, and Endo and Nagami \cite{EN2004}). 
Although the classification of isomorphism classes of irreducible hyperelliptic Lefschetz 
fibrations was partially established in genus two case by Siebert and Tian \cite{ST2003}, 
it seems there is little prospect for a complete classification of isomorphism classes 
of hyperelliptic Lefschetz fibrations. 
In fact, there are infinitely many distinct Lefschetz fibrations of genus two with 
the same numbers of singular fibers of each type (see Baykur and Korkmaz \cite{BK2015}, 
cf. Ozbagci and Stipsicz \cite{OS2000} and Korkmaz \cite{Korkmaz2001}). 


In the present paper, we define 
a $\mathbb{Z}_2$--valued invariant $w$ 
for hyperelliptic Lefschetz fibrations over closed oriented surfaces 
by using chart description introduced by the second author \cite{Kamada1992} 
(Definition \ref{w2} and Proposition \ref{inv}). This invariant 
counts the number of `Dirac braids' included intrinsically in the monodromy representation 
of a hyperelliptic Lefschetz fibration of genus $g$. 
The Dirac braid is a full twist on all strands in the $(2g+2)$-string braid group 
$B_{2g+2}(S^2)$ of a $2$-sphere, 
which corresponds to a Dehn twist around all marked points in the mapping class group 
$\mathcal{M}_{0,2g+2}$ of a $2$-sphere with $2g+2$ marked points, 
and to a maximal chain relator in the hyperelliptic mapping class group $\mathcal{H}_g$ of 
a connected closed oriented surface of genus $g$, 
under natural homomorphisms (see \S \ref{three} for details). 
The relator $r_4$ in $\mathcal{M}_{0,2g+2}$ corresponding to the Dirac braid 
is called the Dirac braid relator (see \S \ref{chartdes}). 
The proof of the invariance of $w$ under chart moves is the most technical part of this paper 
(Proposition \ref{typew} and Proposition \ref{transition}). 
We expect the invariant $w$ to coincide with a $\mathbb{Z}_2$-valued invariant 
for hyperelliptic Lefschetz fibrations of odd genus mentioned by Auroux and Smith 
\cite{AS2008} (Remark \ref{as}). 
Employing the invariant $w$, 
we prove that two hyperelliptic Lefschetz fibrations of genus $g$ 
over a given base space 
are stably isomorphic if and only if they have the same numbers of singular fibers of each type 
and they have the same value of $w$ if $g$ is odd (Theorem \ref{stable}). 
Two hyperelliptic Lefschetz fibrations of genus $g$ over a given base space 
are called stably isomorphic if they become isomorphic after fiber-summed with 
the same number of copies of a hyperelliptic Lefschetz fibration on 
$\mathbb{CP}^2\# (4g+5)\overline{\mathbb{CP}}^2$, 
which is a natural generalization of the rational elliptic surface $E(1)$ (Definition \ref{stiso}). 
We also give examples of pair of hyperelliptic Lefschetz fibrations with the same numbers of 
singular fibers of each type which are not stably isomorphic 
(Example \ref{not1} and Example \ref{not2}).


\section{Lefschetz fibrations and hyperelliptic structures}


In this section we review a precise definition and basic properties of Lefschetz fibrations 
and introduce a notion of hyperellipticity for Lefschetz fibrations. 
See also Matsumoto \cite{Matsumoto1996}, Gompf and Stipsicz \cite{GS1999}, 
and Endo and Kamada \cite{EK2013}. 

\subsection{Lefschetz fibrations and their monodromies}

We begin with a precise definition of Lefschetz fibration. 
Let $\Sigma_g$ be a connected closed oriented surface of genus $g$. 

\begin{defn}\label{LF}
Let $M$ and $B$ be connected closed oriented smooth $4$--manifold and $2$--manifold, 
respectively. 
A smooth map $f\co M\rightarrow B$ is called a(n achiral) {\it Lefschetz fibration} of 
genus $g$ if it satisfies the following conditions: 

(i) the set $\Delta\subset B$ of critical values of $f$ is finite 
and $f$ is a smooth fiber bundle 
over $B-\Delta$ with fiber $\Sigma_g$; 

(ii) for each $b\in\Delta$, there exists a unique 
critical point $p$ in the {\it singular fiber} $F_b:=f^{-1}(b)$ 
such that $f$ is locally written as 
$f(z_1,z_2)=z_1z_2$ or $\bar{z}_1z_2$ with respect to some local complex 
coordinates around $p$ and $b$ which are compatible with 
orientations of $M$ and $B$; 

(iii) no fiber contains a $(\pm 1)$--sphere. 

We call $M$ the {\it total space}, $B$ the {\it base space}, and $f$ the {\it projection}. 
We call $p$ a critical point of {\it positive type} (resp. of {\it negative type}) 
and $F_b$ a singular fiber of {\it positive type} (resp. of {\it negative type}) if $f$ is locally 
written as $f(z_1,z_2)=z_1z_2$ (resp. $f(z_1,z_2)=\bar{z}_1z_2$) in (ii). 
For a regular value $b\in B$ of $f$, $f^{-1}(b)$ is often called a {\it general fiber}. 
\end{defn}

\begin{defn} 
Let $f\co M\rightarrow B$ and $f'\co M'\rightarrow B$ be Lefschetz fibrations of genus $g$ 
over the same base space $B$. We say that $f$ is {\it isomorphic} to $f'$ 
if there exist orientation preserving diffeomorphisms $H\co M\rightarrow M'$ 
and $h\co B\rightarrow B$ which satisfy $f'\circ H=h\circ f$. 
If we can choose such an $h$ isotopic to the identity relative to a given base point 
$b_0\in B$, we say that $f$ is {\it strictly isomorphic} to $f'$. 
\end{defn}

Let $\mathcal{M}_g$ be the mapping class group of $\Sigma_g$, 
namely the group of all isotopy classes of orientation preserving 
diffeomorphisms of $\Sigma_g$. 
We assume that $\mathcal{M}_g$ acts on the {\it right} : 
the symbol $\varphi\psi$ means that we apply $\varphi$ first 
and then $\psi$ for $\varphi, \psi \in \mathcal{M}_g$. 

Let $f\co M\rightarrow B$ be a Lefschetz fibration of genus $g$ 
as in Definition \ref{LF}. Take a base point $b_0\in B-\Delta$ 
and an orientation preserving diffeomorphism $\Phi\co\Sigma_g\rightarrow F_0:=f^{-1}(b_0)$. 
Since $f$ restricted over $B-\Delta$ 
is a smooth fiber bundle with fiber $\Sigma_g$, 
we can define a homomorphism 
\[
\rho\co\pi_1(B-\Delta,b_0)\rightarrow \mathcal{M}_g 
\]
called the {\it monodromy representation} of $f$ with respect to $\Phi$ 
(see Matsumoto \cite[\S 2]{Matsumoto1996}). 
Let $\gamma$ be the loop based at $b_0$ 
consisting of the boundary circle of a small disk neighborhood 
of $b\in\Delta$ oriented counterclockwise and 
a simple path connecting a point on the circle to $b_0$ in $B-\Delta$. 
It is known that $\rho([\gamma])$ is a 
Dehn twist along some essential simple closed curve $c$ on $\Sigma_g$. 
The curve $c$ is called the {\it vanishing cycle} of 
the critical point $p$ on $f^{-1}(b)$. 
If $p$ is of positive type (resp. of negative type), then 
the Dehn twist is right-handed (resp. left-handed). 

A singular fiber is said to be of  {\it type {\rm I}} if the vanishing cycle is non-separating 
and of {\it type ${\rm II}_h$} for $h=1,\ldots , [g/2]$ if the vanishing cycle is 
separating and it bounds a genus--$h$ subsurface of $\Sigma_g$.  
A singular fiber is said to be of {\it type ${\rm I}^+$} (resp. 
{\it type ${\rm I}^-$}, {\it type ${\rm II}_h^+$},  {\it type ${\rm II}_h^-$}) 
if it is of type I and of positive type (resp. of type I and of negative type, 
of type ${\rm II}_h$ and of positive type, of type ${\rm II}_h$ and of negative type). 
We denote by 
$n_0^{+}(f)$, $n_0^{-}(f)$, $n_h^{+}(f)$, and $n_h^{-}(f)$, 
the numbers of singular fibers of $f$ of type 
${\rm I}^+$, ${\rm I}^-$, ${\rm II}_h^+$, and ${\rm II}_h^-$, respectively.  
A Lefschetz fibration is called  {\it irreducible} if 
every singular fiber is of type I.   
A Lefschetz fibration is called {\it chiral} if every singular fiber is of positive type. 

Suppose that the cardinality of $\Delta$ is equal to $n$. 
A system $\mathcal{A}= (A_1, \dots, A_n)$ of arcs on $B$ is called 
a {\it Hurwitz arc system} for $\Delta$ with base point $b_0$ if 
each $A_i$ is an embedded arc connecting $b_0$ 
with a point of $\Delta$ in $B$ such that $A_i \cap A_j= \{b_0\}$ for $i \ne j$, 
and they appear in this order around $b_0$ (see Kamada \cite{Kamada2002}).  
When $B$ is a $2$-sphere, 
the system $\mathcal{A}$ determines a system of generators of 
$\pi_1(B- \Delta, b_0)$, say $(a_1, \dots, a_n)$.  
We call $( \rho(a_1), \dots, \rho(a_n) )$ a {\it Hurwitz system} of $f$.  
It is easy to see that $\rho$ is determined by $( \rho(a_1), \dots, \rho(a_n) )$. 

\subsection{Hyperelliptic structures}

We next introduce a notion of hyperellipticity for Lefschetz fibrations (cf. 
Endo \cite[Definition 4.2]{Endo2000}). 

Let $\iota$ be the isotopy class of a {\it hyperelliptic involution} $I$, 
an involution on $\Sigma_g$ with $2g+2$ fixed points, 
and $\mathcal{H}_g$ the centralizer of $\iota$ in $\mathcal{M}_g$, 
which is called the {\it hyperelliptic mapping class group} of $\Sigma_g$ with respect to $\iota$. 
Let $\zeta_1,\ldots ,\zeta_{2g+1},\sigma_1,\ldots ,\sigma_{[g/2]}$ be 
right-handed Dehn twists along simple closed curves 
$c_1,\ldots ,c_{2g+1}, s_1,\ldots ,s_{[g/2]}$ on $\Sigma_g$ 
depicted in Figure \ref{curves1}, respectively. 
We suppose that $c_1,\ldots ,c_{2g+1}, s_1,\ldots ,s_{[g/2]}$ are invariant 
under $I$ and thus 
$\zeta_1,\ldots ,\zeta_{2g+1},\sigma_1,\ldots ,\sigma_{[g/2]}$ belong to 
$\mathcal{H}_g$. 

\begin{figure}[ht!]
\labellist
\small \hair 2pt
\pinlabel $c_1$ [r] at 0 155
\pinlabel $c_2$ [t] at 45 139
\pinlabel $c_3$ [t] at 70 146
\pinlabel $c_4$ [t] at 98 139
\pinlabel $c_{2g}$ [t] at 242 139
\pinlabel $c_{2g+1}$ [l] at 285 155
\pinlabel $c_2$ [t] at 44 26
\pinlabel $c_{2h}$ [t] at 105 26
\pinlabel $c_{2h+2}$ [t] at 182 26
\pinlabel $c_{2g}$ [t] at 243 26
\pinlabel $s_{h}$ [r] at 135 72
\endlabellist
\centering
\includegraphics[scale=0.65]{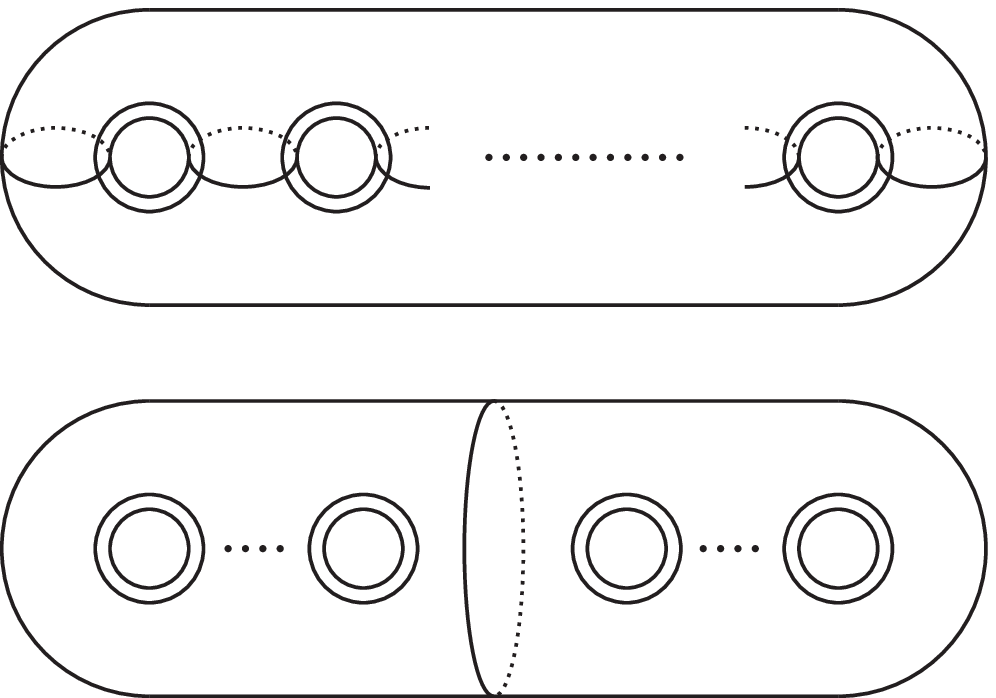}
\caption{Simple closed curves on $\Sigma_g$}
\label{curves1}
\end{figure}

\begin{defn}\label{HLF} 
Let $f\co M\rightarrow B$ be a Lefschetz fibration of genus $g$, 
$\Delta$ the set of critical values of $f$, 
and $b_0\in B-\Delta$ a base point. 
Take an orientation preserving diffeomorphism $\Phi\co\Sigma_g\rightarrow f^{-1}(b_0)$ 
and consider the monodromy representation 
$\rho\co\pi_1(B-\Delta,b_0)\rightarrow \mathcal{M}_g$ of $f$ with respect to $\Phi$. 
The pair $(f,\Phi)$ is called a {\it hyperelliptic Lefschetz fibration} 
(and $\Phi$ is called a {\it hyperelliptic structure} on $f$) 
if the image of $\rho$ is included in $\mathcal{H}_g$. 
Let $\Phi, \Phi'\co\Sigma_g\rightarrow f^{-1}(b_0)$ be hyperelliptic structures on $f$. 
We say that $\Phi$ is {\it equivalent} to $\Phi'$ 
if the isotopy class of $(\Phi')^{-1}\circ \Phi$ belongs to $\mathcal{H}_g$. 
\end{defn}

\begin{defn}\label{H-isom} 
Let $f\co M\rightarrow B$ and $f'\co M'\rightarrow B$ be Lefschetz fibrations of genus $g$ 
over the same base space $B$, 
$\Delta$ and $\Delta'$ the sets of critical values of $f$ and $f'$, 
and $b_0\in B-\Delta$ and $b'_0\in B-\Delta'$ base points for $f$ and $f'$, respectively. 
Take orientation preserving diffeomorphisms $\Phi\co\Sigma_g\rightarrow f^{-1}(b_0)$ and 
$\Phi'\co\Sigma_g\rightarrow f'^{-1}(b'_0)$ and 
suppose that the pairs $(f,\Phi)$ and $(f',\Phi')$ are hyperelliptic Lefschetz fibrations. 
We say that $(f,\Phi)$ is $\mathcal{H}$--{\it isomorphic} to $(f',\Phi')$ 
if there exist orientation preserving diffeomorphisms $H\co M\rightarrow M'$ 
and $h\co B\rightarrow B$ which satisfy the following conditions: 
(i) $f'\circ H=h\circ f$; (ii) $h(b_0)=b'_0$; 
(iii) $H|_{f^{-1}(b_0)}\circ \Phi$ is a hyperelliptic structure on $f'$ equivalent to $\Phi'$. 
If we can choose such an $h$ isotopic to the identity relative to a given base point 
$b_0$, we say that $(f,\Phi)$ is {\it strictly} $\mathcal{H}$--{\it isomorphic} to $(f',\Phi')$. 
If $(f,\Phi)$ is $\mathcal{H}$--isomorphic (resp. strictly $\mathcal{H}$--isomorphic) 
to $(f',\Phi')$, then $f$ is isomorphic (resp. strictly isomorphic) to $f'$. 
\end{defn}

The next lemmas easily follow from theorems of Matsumoto 
\cite[Theorem 2.4, Theorem 2.6]{Matsumoto1996} 
(see also Kas \cite[Theorem 2.4]{Kas1980}). 

\begin{lem}\label{monod} 
Suppose that $g$ is greater than one. 
Let $(f,\Phi)$ and $(f',\Phi')$ be hyperelliptic Lefschetz fibrations of genus $g$ 
as in Definition \ref{H-isom}, 
and $\rho\co\pi_1(B-\Delta,b_0)\rightarrow \mathcal{H}_g$ and 
$\rho'\co\pi_1(B-\Delta',b'_0)\rightarrow \mathcal{H}_g$ the 
monodromy representations of $f$ and $f'$ with respect to $\Phi$ and $\Phi'$, 
respectively. 

{\rm (1)} $(f,\Phi)$ is $\mathcal{H}$--isomorphic to $(f',\Phi')$ 
if and only if there exist an orientation preserving diffeomorphism 
$h\co B\rightarrow B$ and an element $\alpha$ of $\mathcal{H}_g$ 
which satisfies the following conditions: 
{\rm (i)} $h(\Delta)=\Delta';$ {\rm (ii)} $h(b_0)=b'_0;$ 
{\rm (iii)} $\rho'(h_{\#}(\gamma))=\alpha^{-1}\rho(\gamma)\alpha$ for every 
$\gamma\in\pi_1(B-\Delta,b_0)$. 

{\rm (2)} $(f,\Phi)$ is strictly $\mathcal{H}$--isomorphic to $(f',\Phi')$ 
if and only if there exist an orientation preserving diffeomorphism 
$h\co B\rightarrow B$ and an element $\alpha$ of $\mathcal{H}_g$ 
which satisfies the conditions {\rm (i), (ii), (iii)}, and 
{\rm (iv)} $h$ is isotopic to the identity relative to a given base point $b_0$. 
\end{lem}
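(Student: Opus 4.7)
The plan is to reduce both parts to Matsumoto's monodromy classification of Lefschetz fibrations (\cite[Theorem 2.4, Theorem 2.6]{Matsumoto1996}), which for $g>1$ says that two genus-$g$ Lefschetz fibrations over a common base space are (strictly) isomorphic exactly when there is a base diffeomorphism $h$ (isotopic to the identity rel $b_0$ in the strict case) carrying $\Delta$ to $\Delta'$ and $b_0$ to $b_0'$, together with some $\alpha\in\mathcal{M}_g$ such that $\rho'(h_{\#}(\gamma))=\alpha^{-1}\rho(\gamma)\alpha$ for every $\gamma$. Granting this, the content of the lemma is just to verify, in both directions, that the conjugating element $\alpha$ lies in $\mathcal{H}_g$ exactly when the ambient isomorphism is an $\mathcal{H}$-isomorphism.

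For the ``only if'' half of (1), suppose $(H,h)$ realises an $\mathcal{H}$-isomorphism from $(f,\Phi)$ to $(f',\Phi')$. Conditions (i) and (ii) of Definition \ref{H-isom} immediately give $h(\Delta)=\Delta'$ and $h(b_0)=b_0'$. Define
\[
\alpha:=\bigl[(\Phi')^{-1}\circ H|_{f^{-1}(b_0)}\circ\Phi\bigr]\in\mathcal{M}_g.
\]
Condition (iii) of Definition \ref{H-isom}, combined with the definition of equivalence of hyperelliptic structures, means precisely that $\alpha\in\mathcal{H}_g$. Since $H$ restricts to a fibre-bundle isomorphism $f^{-1}(B-\Delta)\to (f')^{-1}(B-\Delta')$ covering $h$, comparing parallel transport along $\gamma$ with parallel transport along $h_\#(\gamma)$, after reidentifying the base fibres through the trivialisations $\Phi$ and $\Phi'$, yields the standard identity $\rho'(h_\#(\gamma))=\alpha^{-1}\rho(\gamma)\alpha$.

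For the ``if'' half, given $h$ and $\alpha\in\mathcal{H}_g\subset\mathcal{M}_g$ satisfying (i)--(iii), Matsumoto's theorem produces a diffeomorphism $H\co M\to M'$ with $f'\circ H=h\circ f$ implementing the given conjugation; by composing $H$ with an isotopy of $M'$ supported near $(f')^{-1}(b_0')$, one can arrange $H|_{f^{-1}(b_0)}$ so that the isotopy class of $(\Phi')^{-1}\circ H|_{f^{-1}(b_0)}\circ\Phi$ is exactly $\alpha$. Since $\alpha\in\mathcal{H}_g$, the hyperelliptic structure $H|_{f^{-1}(b_0)}\circ\Phi$ is then equivalent to $\Phi'$, which is condition (iii) of Definition \ref{H-isom}, so $(f,\Phi)$ is $\mathcal{H}$-isomorphic to $(f',\Phi')$. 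Part (2) is obtained by running the same arguments with the strict version of Matsumoto's theorem and appending the isotopy condition on $h$ at each step.

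The main obstacle I anticipate is the fine-tuning in the ``if'' direction: Matsumoto's theorem supplies \emph{some} total-space diffeomorphism $H$ realising the prescribed conjugation relation on monodromies, but one must check that $H|_{f^{-1}(b_0)}$ can be chosen within its isotopy class so that the resulting element of $\mathcal{M}_g$ is precisely $\alpha$ (not merely a representative of the same outer class). Tracking Matsumoto's correspondence between bundle isomorphisms and conjugating elements, and absorbing the residual ambiguity into a fibre-preserving isotopy near $F_0$, is the one point that requires genuine verification; everything else is a direct translation between the two formulations.
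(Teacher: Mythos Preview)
Your approach is exactly the one the paper intends: the paper does not give a proof at all but simply declares that the lemma ``easily follow[s] from theorems of Matsumoto \cite[Theorem~2.4, Theorem~2.6]{Matsumoto1996}'', and your argument is the natural unpacking of that citation. The only comment is that the ``obstacle'' you worry about in the last paragraph is not really an obstacle: in Matsumoto's construction the total-space diffeomorphism $H$ is built from the given data $(h,\alpha)$, so the restriction $(\Phi')^{-1}\circ H|_{f^{-1}(b_0)}\circ\Phi$ represents the prescribed $\alpha$ by design, not merely some element in the same outer class; no after-the-fact isotopy adjustment is needed.
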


\begin{lem}\label{exist1}
Suppose that $g$ is greater than one. 
Let $\rho\co\pi_1(B-\Delta,b_0)\rightarrow \mathcal{H}_g$ be a homomorphism and 
$\mathcal{A}= (A_1, \dots, A_n)$ a Hurwitz arc system for $\Delta$ with base point $b_0$. 
We assume that $\rho(a_1), \dots, \rho(a_n)$ are Dehn twists 
along simple closed curves on $\Sigma_g$ for the system of generators 
$(a_1, \dots, a_n)$ of $\pi_1(B- \Delta, b_0)$ determined by $\mathcal{A}$. 
Then there exists a hyperelliptic Lefschetz fibration $(f,\Phi)$ of genus $g$ 
as in Definition \ref{HLF} 
with monodromy representation $\rho$. 
\end{lem}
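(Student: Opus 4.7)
The plan is to reduce directly to the classical realization theorem of Matsumoto \cite{Matsumoto1996} (see also Kas \cite{Kas1980}), which states that, for $g \geq 2$, any homomorphism $\pi_1(B-\Delta,b_0) \to \mathcal{M}_g$ whose value on some Hurwitz system is a tuple of Dehn twists (each either right- or left-handed) is realized as the monodromy representation of an achiral Lefschetz fibration over $B$ of genus $g$, together with an identification of the reference fiber with $\Sigma_g$. The lemma then reduces to the observation that hyperellipticity of the resulting fibration is automatic from the hypothesis on $\rho$.

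Concretely, I would first form the flat $\Sigma_g$-bundle over $B - \Delta$ associated with $\rho$ via an orientation preserving diffeomorphism $\Phi\co \Sigma_g \to F_0$ identifying $\Sigma_g$ with the reference fiber $F_0 = f^{-1}(b_0)$. At each critical value $b_i \in \Delta$, the generator $a_i$ of $\pi_1(B-\Delta, b_0)$ determined by the Hurwitz arc $A_i$ is sent by $\rho$ to a Dehn twist along an essential simple closed curve $c_i$ on $\Sigma_g$; I then fill in a neighborhood of $b_i$ by the standard local Lefschetz model $f(z_1,z_2) = z_1 z_2$ or $\bar z_1 z_2$ attached along the vanishing cycle $\Phi(c_i)$, with the choice of model dictated by the handedness of $\rho(a_i)$. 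This yields the smooth $4$-manifold $M$ and the projection $f\co M \to B$ whose monodromy representation with respect to $\Phi$ agrees with $\rho$ by construction.

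The remaining verifications are immediate. Condition (iii) of Definition \ref{LF} holds because $g \geq 2$ and the vanishing cycles $c_i$ are essential simple closed curves (being the cores of the Dehn twists $\rho(a_i)$), so no fiber contains a $(\pm 1)$-sphere. Moreover, the pair $(f, \Phi)$ is a hyperelliptic Lefschetz fibration in the sense of Definition \ref{HLF} because the image of the monodromy $\rho$ is, by hypothesis, contained in $\mathcal{H}_g$, which is precisely what that definition demands.

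The only real subtlety is ensuring that the local Lefschetz models glue consistently to the flat $\Sigma_g$-bundle over $B - \Delta$: the handedness of each Dehn twist $\rho(a_i)$ must match the orientation of the corresponding local complex model, and the attaching map along the boundary circle of a small disk neighborhood of $b_i$ must agree, up to isotopy, with the boundary monodromy of that model. This compatibility is exactly what the cited theorems of Matsumoto and Kas establish (using essentially that, for $g \geq 2$, the centerless mapping class group $\mathcal{M}_g$ rigidly detects vanishing cycle data up to isotopy), so once they are invoked, the only additional observation needed is the formal one that $\mathrm{Im}\,\rho \subseteq \mathcal{H}_g$ makes $\Phi$ a hyperelliptic structure by definition.
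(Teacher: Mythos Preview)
Your proposal is correct and follows essentially the same approach as the paper, which simply states that the lemma ``easily follows from theorems of Matsumoto \cite[Theorem 2.4, Theorem 2.6]{Matsumoto1996} (see also Kas \cite[Theorem 2.4]{Kas1980})'' without giving further details. You have spelled out the construction and the verification that $\Phi$ is a hyperelliptic structure, but the underlying idea---invoke the Matsumoto/Kas realization theorem and then observe that ${\rm Im}\,\rho\subset\mathcal{H}_g$ makes the hyperellipticity automatic---is exactly the paper's intended argument.
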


For any $(\alpha_1,\ldots ,\alpha_n)\in (\mathcal{H}_g)^n$ such that each $\alpha_i$ is 
a Dehn twist, there exists a hyperelliptic Lefschetz fibration $(f,\Phi)$ of genus $g$ 
over $S^2$ with Hurwitz system $(\alpha_1,\ldots ,\alpha_n)$ by Lemma \ref{exist1}. 
We call such $(f,\Phi)$ a hyperelliptic Lefschetz fibration {\it determined} by 
$(\alpha_1,\ldots ,\alpha_n)$. 

Two isomorphic hyperelliptic Lefschetz fibrations need not be $\mathcal{H}$--isomorphic. 
We give a sufficient condition for isomorphic hyperelliptic Lefschetz fibrations to be 
$\mathcal{H}$--isomorphic. 

Let $(f,\Phi)$ and $(f',\Phi')$ be hyperelliptic Lefschetz fibrations of genus $g$ 
as in Definition \ref{H-isom}, and $\rho$ and $\rho'$ their 
monodromy representations as in Lemma \ref{monod}. 

\begin{prop}\label{image} 
Suppose that the image of $\rho$ is $\mathcal{H}_g$. 
Then $f$ is isomorphic to $f'$ if and only if 
$(f,\Phi)$ is $\mathcal{H}$--isomorphic to $(f',\Phi')$. 
\end{prop}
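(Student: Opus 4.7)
The ``if'' direction is immediate from Definition \ref{H-isom}, so the content is in the ``only if'' direction. My plan is to reduce the assertion to Lemma \ref{monod}\,(1): given an isomorphism between $f$ and $f'$, I want to produce a base-space diffeomorphism $h$ with $h(\Delta)=\Delta'$, $h(b_0)=b_0'$, and an element $\alpha\in\mathcal{H}_g$ that intertwines $\rho$ and $\rho'\circ h_\#$ by conjugation.

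First I would start from orientation preserving diffeomorphisms $H\co M\to M'$ and $h_0\co B\to B$ with $f'\circ H=h_0\circ f$, which automatically give $h_0(\Delta)=\Delta'$. Since $B-\Delta'$ is connected, I isotope $h_0$ through diffeomorphisms of $B$ fixing $\Delta'$ setwise to arrange $h(b_0)=b_0'$, and use the isotopy extension property on the bundle $f'|_{f'^{-1}(B-\Delta')}$ to modify $H$ accordingly; this retains the identity $f'\circ H=h\circ f$. At this stage the composition $\Phi'':=H|_{f^{-1}(b_0)}\circ\Phi\co\Sigma_g\to f'^{-1}(b_0')$ is some orientation preserving diffeomorphism (not necessarily a hyperelliptic structure equivalent to $\Phi'$), and the standard naturality of monodromy under isomorphism yields an element $\alpha'\in\mathcal{M}_g$, represented by $(\Phi')^{-1}\circ\Phi''$, such that
\[
\rho'(h_\#(\gamma))=(\alpha')^{-1}\rho(\gamma)\alpha'\qquad\text{for every }\gamma\in\pi_1(B-\Delta,b_0).
\]

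The crux, and the step I expect to be the main obstacle, is promoting $\alpha'$ from $\mathcal{M}_g$ into $\mathcal{H}_g$; this is where the hypothesis $\operatorname{Im}\rho=\mathcal{H}_g$ enters decisively. The conjugation formula combined with $\operatorname{Im}\rho'\subseteq\mathcal{H}_g$ gives the inclusion $(\alpha')^{-1}\mathcal{H}_g\alpha'\subseteq\mathcal{H}_g$. For any $\beta\in\mathcal{H}_g$, the element $(\alpha')^{-1}\beta\alpha'$ commutes with $\iota$, which rearranges to the statement that $\alpha'\iota(\alpha')^{-1}$ commutes with every $\beta\in\mathcal{H}_g$. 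Specializing $\beta=\iota$ shows $\alpha'\iota(\alpha')^{-1}\in\mathcal{H}_g$, so $\alpha'\iota(\alpha')^{-1}$ lies in the center $Z(\mathcal{H}_g)$.

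Finally I would invoke the standard fact that $Z(\mathcal{H}_g)=\{1,\iota\}$ for $g\geq 2$. Since $\alpha'\iota(\alpha')^{-1}$ is conjugate to $\iota$ and in particular is not the identity, we obtain $\alpha'\iota(\alpha')^{-1}=\iota$, i.e.\ $\alpha'\in C_{\mathcal{M}_g}(\iota)=\mathcal{H}_g$. Setting $\alpha=\alpha'$, the pair $(h,\alpha)$ satisfies conditions (i)--(iii) of Lemma \ref{monod}\,(1), so $(f,\Phi)$ is $\mathcal{H}$--isomorphic to $(f',\Phi')$, completing the proof. (The low-genus cases are either covered by the same reasoning or trivial: for $g=1$ one has $\mathcal{H}_1=\mathcal{M}_1$.)
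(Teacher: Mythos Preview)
Your argument is correct, but it follows a different route from the paper's proof. Both proofs begin identically: obtain $h$ and $\alpha\in\mathcal{M}_g$ satisfying (i)--(iii), then show $\alpha\in\mathcal{H}_g$. From there the paper argues geometrically: since each $\zeta_i$ lies in $\operatorname{Im}\rho$, one has $\alpha^{-1}\zeta_i\alpha\in\mathcal{H}_g$, which translates to $(c_i)A$ being isotopic to $((c_i)A)I$; hence $AIA^{-1}I^{-1}$ fixes each curve $c_i$ up to isotopy, and an Alexander-method argument on the curve system $\{c_1,\dots,c_{2g+1}\}$ (whose complement is two disks) forces $\alpha\iota\alpha^{-1}\iota^{-1}\in\{1,\iota\}$, with $\iota$ excluded by a parity check. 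Your approach is purely group-theoretic: from $(\alpha')^{-1}\mathcal{H}_g\alpha'\subseteq\mathcal{H}_g$ you deduce that $\alpha'\iota(\alpha')^{-1}$ centralizes $\mathcal{H}_g$, land it in $Z(\mathcal{H}_g)=\{1,\iota\}$, and rule out $1$ since it is conjugate to $\iota\neq 1$. Your version is shorter and cleaner provided one is willing to quote the standard fact $Z(\mathcal{H}_g)=\langle\iota\rangle$ for $g\geq 2$; the paper's version is more self-contained, reproving precisely the piece of that fact it needs via the explicit curve system already set up in Figure~\ref{curves1}. Both are perfectly valid.
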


\noindent
{\it Proof}. 
The `if' part is obvious. We show the `only if' part. 
Since $f$ is isomorphic to $f'$, 
there exist an orientation preserving diffeomorphism 
$h\co B\rightarrow B$ and an element $\alpha$ of $\mathcal{M}_g$ 
which satisfies the following conditions (see Matsumoto 
\cite[Theorem 2.4]{Matsumoto1996}): 
{\rm (i)} $h(\Delta)=\Delta';$ {\rm (ii)} $h(b_0)=b'_0;$ 
{\rm (iii)} $\rho'(h_{\#}(\gamma))=\alpha^{-1}\rho(\gamma)\alpha$ for every 
$\gamma\in\pi_1(B-\Delta,b_0)$. 
We will show that $\alpha$ belongs to $\mathcal{H}_g$, 
which implies that $(f,\Phi)$ is $\mathcal{H}$--isomorphic to $(f',\Phi')$ by Lemma \ref{monod}. 

For any $i=1,\ldots ,2g+1$, there exists an element $\gamma_i$ of $\pi_1(B-\Delta, b_0)$ 
such that $\rho(\gamma_i)=\zeta_i$ because the image of $\rho$ is 
$\mathcal{H}_g$. Then we have 
\[
\alpha^{-1}\zeta_i\alpha=\alpha^{-1}\rho(\gamma_i)\alpha
=\rho'(h_{\#}(\gamma_i))\in\mathcal{H}_g
\]
from (iii). This implies that $\alpha^{-1}\zeta_i\alpha=\iota^{-1}\alpha^{-1}\zeta_i\alpha\iota$ 
and thus $(c_i)A$ is isotopic to $((c_i)A)I$ by \cite[Fact 3.6]{FM2011}, 
where $A$ is an orientation preserving diffeomorphism on $\Sigma_g$
representing $\alpha$. Since $c_i$ is invariant under $I$, 
$(c_i)(A I A^{-1} I^{-1})$ is isotopic to $c_i$. 
Hence there exists a diffeomorphism $F$ on $\Sigma_g$ isotopic to the identity 
such that $(c_i)(A I A^{-1} I^{-1} F)=c_i$ for every $i$ by virtue of 
\cite[Lemma 2.9]{FM2011}. 
Since $\Sigma_g-(c_1\cup\cdots\cup c_{2g+1})$ is a disjoint union of two open disks, 
$A I A^{-1} I^{-1} F$ is isotopic to either the identity or $I$ 
by \cite[Proposition 2.8]{FM2011}. 
If $A I A^{-1} I^{-1} F$ is isotopic to $I$, then we obtain $\iota=1$, 
which is a contradiction. Therefore $A I A^{-1} I^{-1} F$ is isotopic to 
the identity and we have $\alpha\in\mathcal{H}_g$. 
This completes the proof. 
$\square$


\section{Chart descriptions}


In this section we introduce chart descriptions for hyperelliptic Lefschetz fibrations 
by employing finite presentations of hyperelliptic mapping class groups 
and two other groups. 
General theories of charts for presentations of groups were developed independently by 
Kamada \cite{Kamada2007} and Hasegawa \cite{Hasegawa2006}. 
We use the terminology of chart description in Kamada \cite{Kamada2007}. 

\subsection{Three finite presentations}\label{three}

We first review finite presentations of three groups 
related with hyperelliptic Lefschetz fibrations. 

Fadell and Van Buskirk \cite{FV1962} proved that the braid group of a $2$--sphere 
is just the braid group of a $2$--disk with a single additional relation. 

\begin{thm}[(Fadell--Van Buskirk \cite{FV1962})]
Suppose that $g$ is positive. 
The $(2g+2)$--string braid group $B_{2g+2}(S^2)$ of a $2$--sphere 
is generated by elements 
$x_1,x_2,\ldots ,x_{2g+1}$ and has defining relations: 

$(1) \quad x_ix_j =x_jx_i \;\; (1\leq i<j-1\leq 2g);$ 

$(2) \quad x_ix_{i+1}x_i =x_{i+1}x_ix_{i+1} \;\; (i=1,\ldots ,2g);$ 

$(3) \quad x_1x_2\cdots x_{2g+1}x_{2g+1}\cdots x_2x_1=1.$ 
\end{thm}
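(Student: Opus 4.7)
The plan is to compare $B_n(S^2)$, with $n := 2g+2$, against the classical Artin braid group $B_n := \pi_1(F_n(\mathbb{R}^2)/S_n)$ via the inclusion $D^2 \hookrightarrow S^2$. Let $G$ denote the group abstractly presented by generators $x_1,\ldots,x_{2g+1}$ and relations $(1)$, $(2)$, $(3)$; the goal is to construct mutually inverse homomorphisms between $G$ and $B_n(S^2)$.

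First I would construct a surjection $\Psi \co G \to B_n(S^2)$. Choose a disk $D \subset S^2$ containing $n$ marked points and send $x_i$ to the positive half-twist of the $i$-th and $(i+1)$-st points in $D$, viewed inside $B_n(S^2)$. Relations $(1)$ and $(2)$ hold already inside $D$ by Artin's theorem. For relation $(3)$, the braid $x_1 x_2 \cdots x_{2g+1} x_{2g+1} \cdots x_2 x_1$ geometrically represents strand~$1$ travelling once around all the other strands and returning to its starting position; this loop contracts on $S^2$ by sliding strand~$1$ across the complementary disk $S^2\setminus D$. The map $\Psi$ is surjective because any loop in $F_n(S^2)/S_n$ can be perturbed to miss a chosen point of $S^2$ and hence lies in $F_n(\mathbb{R}^2)/S_n$, whose fundamental group is generated by the $x_i$.

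For injectivity of $\Psi$, equivalently to show that $(1)$, $(2)$, $(3)$ suffice, I would invoke the Fadell--Neuwirth fibration
\[
 S^2\setminus\{p_1,\ldots,p_{n-1}\}\longrightarrow F_n(S^2)\longrightarrow F_{n-1}(S^2)
\]
obtained by forgetting the last coordinate. The corresponding long exact sequence of homotopy groups, together with its quotient by the symmetric group action, identifies the kernel of the induced surjection $B_n \to B_n(S^2)$ with the normal closure of the image, under the connecting map $\partial$, of a generator of $\pi_2(F_{n-1}(S^2))$. Iterating the fibration, one reduces this to understanding $\partial$ on a generator of $\pi_2(S^2) \cong \mathbb{Z}$.

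The hardest step, and the crux of the argument, will be the explicit geometric identification of this $\partial$. A generator of $\pi_2(S^2)$ is realised by sweeping a small disk across the sphere; lifting this $2$-parameter family to $F_n(S^2)$ traces out a loop in the fiber $S^2 \setminus \{n-1 \text{ pts}\}$ that encircles all the punctures, and a careful diagrammatic analysis should show that this loop, re-expressed in Artin generators, is precisely the word $x_1 x_2 \cdots x_{2g+1} x_{2g+1} \cdots x_2 x_1$. Once this identification is in hand, the kernel of $B_n \to B_n(S^2)$ is exactly the normal closure of relation $(3)$, yielding the inverse homomorphism $B_n(S^2) \to G$ and finishing the proof.
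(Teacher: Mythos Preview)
The paper does not prove this theorem; it is quoted from Fadell and Van Buskirk \cite{FV1962} as background, with no argument supplied. There is therefore nothing in the paper to compare your proposal against.

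That said, your outline is essentially the original Fadell--Van Buskirk strategy: surjectivity of $B_n \to B_n(S^2)$ via general position, and identification of the kernel via Fadell--Neuwirth fibrations and their connecting maps. One technical point is worth tightening. The fibration you write, $S^2\setminus\{p_1,\ldots,p_{n-1}\}\to F_n(S^2)\to F_{n-1}(S^2)$, relates $P_n(S^2)$ to $P_{n-1}(S^2)$; it does not by itself exhibit the kernel of $B_n(\mathbb{R}^2)\to B_n(S^2)$, and taking the quotient by the symmetric-group action does not interact cleanly with this fibration. The direct route uses instead the fibration $F_n(\mathbb{R}^2)\hookrightarrow F_{n+1}(S^2)\to S^2$ obtained by forgetting a fixed point at infinity, whose long exact sequence presents $P_{n+1}(S^2)$ as the quotient of $P_n$ by the image of $\partial\co\pi_2(S^2)\to P_n$; one then passes to the full braid groups via the extensions $1\to P_n\to B_n\to S_n\to 1$, or proceeds inductively as in \cite{FV1962} after checking that $\pi_2(F_k(S^2))=0$ for $k\geq 3$. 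Your geometric description of $\partial(\text{generator})$ as the loop of one strand encircling the others, hence the word $x_1\cdots x_{2g+1}x_{2g+1}\cdots x_1$, is the correct heart of the matter.
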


Magnus \cite{Magnus1934} obtained a presentation of the mapping class group of 
a $2$--sphere with marked points, which is essentially the same as the following one. 

\begin{thm}[(Magnus \cite{Magnus1934}, cf. Birman--Hilden \cite{BH1971})]
Suppose that $g$ is positive. 
The mapping class group $\mathcal{M}_{0,2g+2}$ of a $2$--sphere 
with $2g+2$ marked points is generated by elements 
$\xi_1,\xi_2,\ldots ,\xi_{2g+1}$ and has defining relations: 

$(1) \quad \xi_i\xi_j =\xi_j\xi_i \;\; (1\leq i<j-1\leq 2g);$ 

$(2) \quad \xi_i\xi_{i+1}\xi_i =\xi_{i+1}\xi_i\xi_{i+1} \;\; (i=1,\ldots ,2g);$ 

$(3) \quad \xi_1\xi_2\cdots\xi_{2g+1}\xi_{2g+1}\cdots\xi_2\xi_1=1;$ 

$(4) \quad (\xi_1\xi_2\cdots\xi_{2g+1})^{2g+2}=1.$ 
\end{thm}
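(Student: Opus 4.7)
The plan is to derive the presentation of $\mathcal{M}_{0,2g+2}$ from the Fadell--Van Buskirk presentation of $B_{2g+2}(S^2)$ stated just above, by realizing the mapping class group as an explicit quotient of the spherical braid group. The main ingredient is the short exact sequence
\[
1\longrightarrow \mathbb{Z}/2\longrightarrow B_{2g+2}(S^2)\overset{\pi}{\longrightarrow} \mathcal{M}_{0,2g+2}\longrightarrow 1,
\]
which I would establish from the evaluation fibration whose total space is $\mathrm{Diff}^+(S^2)$ and whose base is the unordered configuration space of $2g+2$ points on $S^2$, with fiber the subgroup of diffeomorphisms stabilizing the chosen point set. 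Identifying $\pi_1$ of the base with $B_{2g+2}(S^2)$ and $\pi_0$ of the fiber with $\mathcal{M}_{0,2g+2}$, and using the homotopy equivalence $\mathrm{Diff}^+(S^2)\simeq SO(3)$ (so that $\pi_1\cong\mathbb{Z}/2$ and $\pi_0$ is trivial), the long exact sequence of the fibration collapses to the displayed short exact sequence.

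The second step is to identify the kernel explicitly as the Dirac braid subgroup. A generator of $\pi_1(SO(3))$ is represented by a $2\pi$ rotation of $S^2$, and dragging the $2g+2$ marked points along such a rotation traces out precisely the full twist $\Delta^{2}=(x_1x_2\cdots x_{2g+1})^{2g+2}$ in $B_{2g+2}(S^2)$. This element is Garside's $\Delta^{2}$, known to be central in the spherical braid group and of order exactly $2$ in $B_{n}(S^2)$ for $n\geq 3$. Consequently $\ker(\pi)$ is generated by $\Delta^{2}$, and imposing the single additional relation $\Delta^{2}=1$ on the Fadell--Van Buskirk presentation produces a presentation of $\mathcal{M}_{0,2g+2}$ with generators $\xi_i=\pi(x_i)$ and defining relations (1)--(4).

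The main obstacle is verifying that $\Delta^{2}$ is genuinely nontrivial in $B_{2g+2}(S^2)$. The fibration argument alone only forces $\ker(\pi)$ to be cyclic of order dividing $2$, so to show that relation (4) is not redundant modulo (1)--(3) one must produce an independent obstruction. This is typically accomplished by constructing an explicit representation of the spherical braid group---for instance, the symplectic action on the first homology of the hyperelliptic double cover $\Sigma_g\to S^2$ branched at the marked points, or a Burau-type representation---on which $\Delta^{2}$ acts nontrivially, thereby pinning down the kernel as exactly $\mathbb{Z}/2$ and completing the derivation of the presentation.
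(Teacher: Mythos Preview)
The paper does not supply its own proof of this theorem; it is quoted as a classical result of Magnus (with a cross-reference to Birman--Hilden), and the short exact sequence $1\to\mathbb{Z}_2\to B_{2g+2}(S^2)\to\mathcal{M}_{0,2g+2}\to 1$ is then recorded as a known fact with a citation to Gillette and Van Buskirk. So there is no in-paper argument to compare against; your sketch can only be assessed on its own terms.

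Your overall strategy---the evaluation fibration together with Smale's $\mathrm{Diff}^+(S^2)\simeq SO(3)$, collapsing the long exact sequence to identify $\mathcal{M}_{0,2g+2}$ with $B_{2g+2}(S^2)$ modulo the Dirac braid---is standard and correct. The genuine gap is in your proposed verification that $\Delta^2\neq 1$. Your first suggestion, the symplectic action on $H_1(\Sigma_g)$ via the hyperelliptic cover, does not work: the assignment $x_i\mapsto\zeta_i$ does \emph{not} define a homomorphism $B_{2g+2}(S^2)\to\mathcal{H}_g$, because relation~(3) of the sphere braid group would force $\zeta_1\cdots\zeta_{2g+1}\zeta_{2g+1}\cdots\zeta_1=1$, whereas this element equals $\iota\neq 1$ in $\mathcal{H}_g$. (Indeed $\mathcal{H}_g$ and $B_{2g+2}(S^2)$ are \emph{distinct} $\mathbb{Z}_2$-extensions of $\mathcal{M}_{0,2g+2}$.) Even setting that aside, relation $\hat r_4$ in $\mathcal{H}_g$ already gives $(\zeta_1\cdots\zeta_{2g+1})^{2g+2}=1$, so any representation factoring through $\mathcal{H}_g$ kills the Dirac braid. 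Your Burau fallback is also delicate, since the unreduced Burau representation is of the \emph{disk} braid group and does not in general satisfy the spherical relation~(3). The classical route, which the paper is implicitly invoking via its citations, is the Fadell--Van Buskirk/Gillette--Van Buskirk computation of torsion in $B_n(S^2)$ using the fibration tower of ordered configuration spaces (showing, e.g., that $x_1\cdots x_{n-1}$ has order exactly $2n$); that is the argument you should point to here.
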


Birman and Hilden \cite{BH1971} considered the mapping class group of the orbit space 
of the involution $I$ on $\Sigma_g$, which is a $2$--sphere with $2g+2$ marked points, 
to obtain a presentation of the hyperelliptic 
mapping class group of $\Sigma_g$. 

\begin{thm}[(Birman--Hilden \cite{BH1971})]\label{pres} 
Suppose that $g$ is positive. 
The hyperelliptic mapping class group $\mathcal{H}_g$ of $\Sigma_g$ 
is generated by elements 
$\zeta_1,\zeta_2,\ldots ,\zeta_{2g+1}$ and has defining relations: 

$(1) \quad \zeta_i\zeta_j =\zeta_j\zeta_i \;\; (1\leq i<j-1\leq 2g);$ 

$(2) \quad \zeta_i\zeta_{i+1}\zeta_i =\zeta_{i+1}\zeta_i\zeta_{i+1} \;\; (i=1,\ldots ,2g);$ 

$(3) \quad (\zeta_1\zeta_2\cdots\zeta_{2g+1}\zeta_{2g+1}\cdots\zeta_2\zeta_1)^2=1;$ 

$(4) \quad (\zeta_1\zeta_2\cdots\zeta_{2g+1})^{2g+2}=1;$ 

$(5) \quad [\zeta_1,\zeta_1\zeta_2\cdots\zeta_{2g+1}\zeta_{2g+1}\cdots\zeta_2\zeta_1]=1.$
\end{thm}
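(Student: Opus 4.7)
The plan is to exploit the Birman--Hilden short exact sequence
\[
1 \longrightarrow \langle \iota \rangle \longrightarrow \mathcal{H}_g \longrightarrow \mathcal{M}_{0,2g+2} \longrightarrow 1,
\]
a central extension with $\langle \iota \rangle \cong \mathbb{Z}/2$ for $g \geq 2$ (the case $g=1$ is dispatched separately, since $\mathcal{H}_1 = \mathcal{M}_1$), and to derive the desired presentation from the Magnus presentation of $\mathcal{M}_{0,2g+2}$ quoted just above. The quotient map sends each Dehn twist $\zeta_i$ along $c_i$ to the corresponding half-twist $\xi_i$, so the Magnus generators lift to the $\zeta_i$.

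First I would verify that relations (1)--(5) hold in $\mathcal{H}_g$. The commutation relation (1) and the braid relation (2) are immediate from the intersection pattern of the curves $c_i$ in Figure \ref{curves1}. For (3) and (5) the key classical identity is the chain relation
\[
W := \zeta_1 \zeta_2 \cdots \zeta_{2g+1} \zeta_{2g+1} \cdots \zeta_2 \zeta_1 = \iota
\]
in $\mathcal{H}_g$; then (3) is $\iota^2 = 1$ and (5) is the restriction of the centrality of $\iota$ to the generator $\zeta_1$. Relation (4) lifts Magnus's fourth relation; since $(\xi_1 \cdots \xi_{2g+1})^{2g+2} = 1$ in $\mathcal{M}_{0,2g+2}$, the lift $(\zeta_1 \cdots \zeta_{2g+1})^{2g+2}$ lies in $\langle \iota \rangle$, and I would pin it down as the identity by a direct chain-relation computation: the regular neighborhood of $c_1 \cup \cdots \cup c_{2g+1}$ has two boundary components, each bounding a disk in $\Sigma_g$, so the product of the corresponding boundary Dehn twists is trivial.

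With (1)--(5) verified, let $\tilde{\mathcal{H}}$ be the abstract group they present and $\phi \colon \tilde{\mathcal{H}} \twoheadrightarrow \mathcal{H}_g$ the induced surjection. The heart of the argument is to show that $W \in \tilde{\mathcal{H}}$ is central and satisfies $W^2 = 1$; granting this, the quotient $\tilde{\mathcal{H}}/\langle W \rangle$ inherits a presentation in which (3) and (5) become trivial, while (1), (2), (4) together with the added relator $W = 1$ are exactly Magnus's (1), (2), (4), (3). Hence $\tilde{\mathcal{H}}/\langle W \rangle \cong \mathcal{M}_{0,2g+2}$, so $\tilde{\mathcal{H}}$ is a central extension of $\mathcal{M}_{0,2g+2}$ by a cyclic group of order at most two. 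Comparing with the Birman--Hilden sequence via the Five Lemma and noting that $\phi(W) = \iota \neq 1$ forces $\phi$ to be an isomorphism.

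The main technical obstacle is to show that $W$ is central in $\tilde{\mathcal{H}}$. Relation (5) directly supplies $[\zeta_1, W] = 1$, but the commutations $[\zeta_i, W] = 1$ for $i \geq 2$ must be derived from (1), (2), (5) alone. The plan is to perform Hurwitz-type rewrites inside $\tilde{\mathcal{H}}$: the braid relation (2) permits cyclic reshuffling of the chain-product $W$, and together with the far-commutations (1) one proves by induction on $i$ that $\zeta_i^{-1} W \zeta_i = W$. Alternatively, one writes $\zeta_i = w_i \zeta_1 w_i^{-1}$ for an explicit braid word $w_i$ and checks that $w_i$ commutes with $W$ modulo previously established identities. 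Either route reduces everything to relation (5) but demands careful word-calculus in the hyperelliptic mapping class group; once centrality is in hand, the rest of the argument is formal.
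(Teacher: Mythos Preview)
The paper does not supply a proof of this theorem; it is quoted from Birman--Hilden \cite{BH1971} as background for the chart machinery, so there is no in-paper argument to compare against.

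Your outline is the standard derivation via the central extension and is essentially correct. One sharpening is worth recording: the commutations $[\zeta_i,W]=1$ for $i\ge 2$ that you flag as the ``main technical obstacle'' in fact follow from relations (1) and (2) alone, without (5). A direct word calculation in the Artin braid group $B_{2g+2}$ shows that $W=\sigma_1\cdots\sigma_{2g+1}\sigma_{2g+1}\cdots\sigma_1$ already commutes with each $\sigma_i$ for $i\ge 2$: push the incoming $\sigma_i$ through $W$ using one braid relation at the left end, far--commutations across the middle, and one braid relation at the right end. Geometrically this is clear, since $W$ is the pure braid in which strand~$1$ encircles strands~$2,\ldots,2g+2$, and any crossing among the encircled strands slides freely under that loop; only the crossing $\sigma_1$ between the encircling strand and the bundle fails to commute with $W$ in $B_{2g+2}$. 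Relation (5) therefore supplies exactly the one missing commutation, making $W$ central in $\tilde{\mathcal{H}}$, and your Five-Lemma comparison of the two central $\mathbb{Z}/2$-extensions of $\mathcal{M}_{0,2g+2}$ then goes through as written. Both of your proposed inductive schemes would work once reorganised around this observation, but neither induction nor the conjugation trick $\zeta_i=w_i\zeta_1 w_i^{-1}$ is actually needed.
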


Let $g$ be a positive integer. 
Since the homomorphism from $B_{2g+2}(S^2)$ to $\mathcal{M}_{0,2g+2}$ which sends 
$x_i$ to $\xi_i$ is surjective, we have a central extension 
\[
1\longrightarrow \mathbb{Z}_2\longrightarrow B_{2g+2}(S^2)
\longrightarrow \mathcal{M}_{0,2g+2}\longrightarrow 1
\]
of $\mathcal{M}_{0,2g+2}$ with kernel generated by the {\it Dirac braid} 
$\Delta_{2g+2}=(x_1x_2\cdots x_{2g+1})^{2g+2}$ 
(see Gillette and Van Buskirk \cite{GV1968}). 
We have another central extension 
\[
1\longrightarrow \mathbb{Z}_2\longrightarrow \mathcal{H}_g
\overset{\pi}{\longrightarrow} \mathcal{M}_{0,2g+2}\longrightarrow 1
\]
of $\mathcal{M}_{0,2g+2}$ with kernel generated by the isotopy class 
$\iota=\zeta_1\zeta_2\cdots\zeta_{2g+1}\zeta_{2g+1}\cdots\zeta_2\zeta_1$ of 
the hyperelliptic involution $I$, 
where the homomorphism $\pi:\mathcal{H}_g\rightarrow \mathcal{M}_{0,2g+2}$ sends 
$\zeta_i$ to $\xi_i$.

\subsection{Chart descriptions}\label{chartdes} 

We make use of the above presentations to introduce notions of chart 
which give graphic description of monodromy representations of Lefschetz fibrations. 
We first recall a general definition of chart given by Kamada \cite{Kamada2007}. 

Let $\mathcal{X}$ be a set and $\mathcal{R}$ and $\mathcal{S}$ sets of words in 
$\mathcal{X}\cup\mathcal{X}^{-1}$. 
Let $\mathcal{C}:=(\mathcal{X},\mathcal{R},\mathcal{S})$ be 
the triple consisting of $\mathcal{X}$, $\mathcal{R}$ and $\mathcal{S}$, 
and $G$ the group with presentation $\langle \mathcal{X}\, |\, \mathcal{R}\rangle$. 
Let $B$ be a connected closed oriented surface 
and $\Gamma$ a finite graph in $B$ such that each edge of $\Gamma$ is oriented 
and labeled an element of $\mathcal{X}$. 
Choose a simple path $\gamma$ which intersects with edges of $\Gamma$ 
transversely and does not intersect with vertices of $\Gamma$. 
For such a path $\gamma$, we obtain a word 
$w_{\Gamma}(\gamma)$ in $\mathcal{X}\cup\mathcal{X}^{-1}$ 
by reading off the labels of intersecting edges along $\gamma$ with exponents 
as in Figure \ref{intersection} (a). 
We call the word $w_{\Gamma}(\gamma)$ the {\it intersection word} of $\gamma$ 
with respect to $\Gamma$. 
Conversely, we can specify the number, orientations, and labels of consecutive edges in $\Gamma$ 
by indicating a (dashed) arrow intersecting the edges transversely 
together with the intersection 
word of the arrow with respect to $\Gamma$ (see Figure \ref{intersection} (b) and (c)). 

\begin{figure}[ht!]
\labellist
\small \hair 2pt
\pinlabel $\gamma$ [r] at 0 18
\pinlabel $x$ [t] at 30 0
\pinlabel $y$ [t] at 59 0
\pinlabel $x$ [t] at 87 0
\pinlabel $z$ [t] at 116 0
\pinlabel $y$ [t] at 143 0
\pinlabel {(a)} [t] at 85 -15
\pinlabel $w$ [l] at 255 18
\pinlabel {(b)} [t] at 235 -15
\pinlabel $w$ [b] at 313 33
\pinlabel {(c)} [t] at 313 -15
\endlabellist
\centering
\includegraphics[scale=0.8]{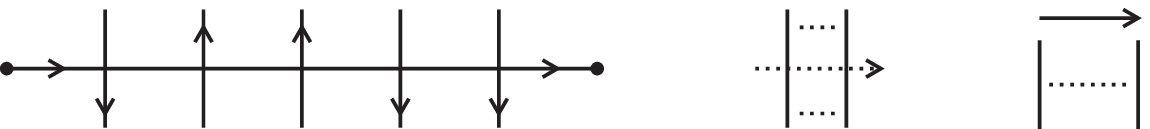}
\vspace{8mm}
\caption{Intersection word 
$w_{\Gamma}(\gamma)=w=xy^{-1}x^{-1}zy$}
\label{intersection}
\end{figure}

For a vertex $v$ of $\Gamma$, a small simple closed curve surrounding $v$ in the 
counterclockwise direction is called a {\it meridian loop} of $v$ and denoted by $m_v$. 
The vertex $v$ is said to be {\it marked} if one of the regions around 
$v$ is specified by an asterisk. 
If $v$ is marked, the intersection word $w_{\Gamma}(m_v)$ of $m_v$ 
with respect to $\Gamma$ is well-defined. 
If not, it is determined up to cyclic permutation. See Kamada \cite{Kamada2007} 
for details. 

\begin{defn}\label{def:chart} 
A $\mathcal{C}$--{\it chart} in $B$ is a finite graph $\Gamma$ in $B$ 
(possibly being empty or having {\it hoops} that are closed edges without vertices) 
whose edges are labeled an element of $\mathcal{X}$, 
and oriented so that the following conditions are satisfied:   
\begin{itemize}
\item[(1)] the vertices of $\Gamma$ are classified into two families: 
{\it white vertices} and {\it black vertices};  
\item[(2)] if $v$ is a white vertex (resp. a black vertex), 
the word $w_{\Gamma}(m_v)$ is a cyclic permutation of an element of 
$\mathcal{R}\cup\mathcal{R}^{-1}$ (resp. of $\mathcal{S}$). 
\end{itemize}
A white vertex $v$ is said to be of 
{\it type} $r$ (resp. of {\it type} $r^{-1}$) if 
$w_{\Gamma}(m_v)^{-1}$ is a cyclic permutation of $r\in\mathcal{R}$ 
(resp. of $r^{-1}\in\mathcal{R}^{-1}$). 
A black vertex $v$ is said to be of {\it type} $s$ if 
$w_{\Gamma}(m_v)$ is a cyclic permutation of $s\in\mathcal{S}$. 
A $\mathcal{C}$--chart $\Gamma$ is said to be {\it marked} 
if each white vertex (resp. black vertex) $v$ is 
marked and $w_{\Gamma}(m_v)$ is exactly an element of 
$\mathcal{R}\cup\mathcal{R}^{-1}$ (resp. of $\mathcal{S}$). 
If a base point $b_0$ of $B$ is specified, we always assume that a chart $\Gamma$ is 
disjoint from $b_0$. 
A chart consisting of two black vertices and one edge connecting them is called 
a {\it free edge}. 
A {\it subchart} of a $\mathcal{C}$--chart $\Gamma$ is the intersection of $\Gamma$ 
with a compact $2$--dimensional submanifold of $B$. 
\end{defn}

\begin{rem} 
It would be worth noting that the intersection word of a `clockwise' meridian of 
a white vertex of type $r$ is equal to $r$, while that of a `counterclockwise' meridian 
of a black vertex of type $s$ is equal to $s$ in this paper (see also \cite{EHKT2014}). 
\end{rem}

Let $\Gamma$ be a $\mathcal{C}$--chart in $B$ with base point $b_0$ 
and $\Delta_{\Gamma}$ the set of black vertices of $\Gamma$. 

\begin{defn} 
For a loop $\eta$ in $B-\Delta_{\Gamma}$ based at $b_0$, 
the element of $G$ determined by 
the intersection word $w_{\Gamma}(\eta)$ of $\eta$ with respect to $\Gamma$ does not 
depend on a choice of representative of the homotopy class of $\eta$. 
Thus we obtain a homomorphism 
$\rho_{\Gamma}\co\pi_1(B-\Delta_{\Gamma},b_0)\rightarrow G$, 
which is called the {\it homomorphism determined by $\Gamma$}. 
\end{defn}

Let $\Delta$ be a finite subset of $B$ and $b_0$ a base point of $B-\Delta$. 

\begin{defn}
A homomorphism from $\pi_1(B-\Delta,b_0)$ to $G$ is called 
a $G$--{\it monodromy representation}. 
A loop $\ell$ in $B-\Delta$ based at $b_0$ is called a {\it meridional loop} 
if it is obtained from a meridian loop $m_v$ of a point $v$ of $\Delta$ 
by connecting with $b_0$ along a path $n$ in $B-\Delta$, 
namely: $\ell=n^{-1}\cdot m_v\cdot n$. 
We denote by $\mathcal{M}(B,\Delta, b_0;\mathcal{C})$ the set of $G$--monodromy 
representations $\rho:\pi_1(B-\Delta, b_0)\rightarrow G$ such that 
$\rho([\ell])$ is conjugate to the element of $G$ determined by an element of 
$\mathcal{S}$ for every meridional loop $\ell$. 
For a $\mathcal{C}$--chart $\Gamma$ in $B$ with base point $b_0$ and the black vertices 
$\Delta_{\Gamma}$, the homomorphism $\rho_{\Gamma}$ determined by $\Gamma$ 
belongs to $\mathcal{M}(B,\Delta_{\Gamma}, b_0;\mathcal{C})$. 
\end{defn}

\begin{thm}[(Kamada \cite{Kamada2007}, Hasegawa \cite{Hasegawa2006})]\label{exist} 
For any $G$--monodromy representation $\rho:\pi_1(B-\Delta, b_0)\rightarrow G$ 
belonging to $\mathcal{M}(B,\Delta, b_0;\mathcal{C})$, 
there exists a $\mathcal{C}$--chart $\Gamma$ such that $\rho_{\Gamma}=\rho$. 
\end{thm}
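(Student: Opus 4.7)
The plan is to build the chart $\Gamma$ from the monodromy data by first constructing a ``tail-and-vertex'' graph along a Hurwitz arc system, then closing up the resulting boundary word using white vertices corresponding to relations in $\mathcal{R}$, and finally handling the genus of $B$ by a cut-and-reglue argument.

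First, I would pick a Hurwitz arc system $\mathcal{A}=(A_1,\dots,A_n)$ for $\Delta$ with base point $b_0$, together with standard surface generators $\alpha_1,\beta_1,\dots,\alpha_{g'},\beta_{g'}$ of $\pi_1(B-\Delta,b_0)$ (where $g'$ is the genus of $B$). Let $a_1,\dots,a_n$ be the meridional generators coming from $\mathcal{A}$. By the hypothesis $\rho\in\mathcal{M}(B,\Delta,b_0;\mathcal{C})$, for each $i$ there exist a word $u_i$ in $\mathcal{X}\cup\mathcal{X}^{-1}$ and an element $s_i\in\mathcal{S}$ with $\rho(a_i)=u_i s_i u_i^{-1}$ in $G$. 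Choose also words $v_j, w_j$ representing $\rho(\alpha_j),\rho(\beta_j)$.

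Next, near each $b_i\in\Delta$ I would place a marked black vertex of type $s_i$: a little star of edges whose counterclockwise meridian word reads precisely $s_i$. Along the Hurwitz arc $A_i$ (pushed slightly off) I would then draw parallel edges realizing the conjugating word $u_i$, so that a meridional loop around $b_i$ based at $b_0$ has intersection word exactly $u_i s_i u_i^{-1}$. Similarly, across a neighborhood of each surface-generator loop $\alpha_j,\beta_j$ I would draw families of parallel edges with intersection words $v_j,w_j$. The result is a labeled oriented graph $\Gamma_0\subset B$ that (by construction) induces the homomorphism $\rho$ on every chosen generator, but whose white-vertex condition is not yet satisfied: many edges accumulate near $b_0$ without forming relators.

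The core step is to repair $\Gamma_0$ near $b_0$. By the single surface relation $\prod_i a_i\cdot\prod_j[\alpha_j,\beta_j]=1$ in $\pi_1(B-\Delta,b_0)$ and the fact that $\rho$ is a homomorphism, the boundary word $W=\prod_i(u_i s_i u_i^{-1})\cdot\prod_j[v_j,w_j]$ read around a small circle about $b_0$ equals $1$ in $G$, hence in the free group on $\mathcal{X}$ it is a product of conjugates of elements of $\mathcal{R}\cup\mathcal{R}^{-1}$. A van~Kampen diagram $D$ for this identity is a planar $2$-complex whose $2$-cells are labeled by relators of $\mathcal{R}$; its dual graph is exactly a planar chart whose white vertices correspond to the $2$-cells of $D$ (each satisfying the required meridian-word condition) and whose outer boundary word is $W$. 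I would glue this dual chart into a disk neighborhood of $b_0$ so that its outer edges match those of $\Gamma_0$ emerging from the Hurwitz arcs and surface-generator strips.

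Finally, for $g'>0$ I would cut $B$ open along the standard $\alpha_j,\beta_j$ loops to obtain a $4g'$-gon with $\Delta$ in its interior, carry out the construction above on this disk, and verify that opposite sides of the polygon carry matching parallel-edge patterns (because they are labeled by the same words $v_j,w_j$), so that regluing yields a globally well-defined $\mathcal{C}$-chart $\Gamma$ on $B$. The homomorphism $\rho_\Gamma$ agrees with $\rho$ on each generator by construction, hence on all of $\pi_1(B-\Delta,b_0)$. The hardest step is the middle one: producing the van Kampen diagram and embedding it as a planar chart with the correct outer boundary, without introducing spurious intersections with the tails or with each other; this is the technical content of the Kamada--Hasegawa chart-existence theorem and is where the compatibility between the abstract presentation $\mathcal{C}$ and its planar realization is essentially used.
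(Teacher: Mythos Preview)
The paper does not give a proof of this theorem; it is quoted as a result of Kamada (2007) and Hasegawa (2006) and used as a black box. So there is no in-paper argument to compare against.

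That said, your outline is essentially the standard construction that underlies those references: place a black vertex of type $s_i$ at each point of $\Delta$, run parallel edges realizing the conjugating word $u_i$ along the Hurwitz arc $A_i$, run parallel edges realizing $v_j,w_j$ across the handle curves, and then observe that the single surface relation in $\pi_1(B-\Delta,b_0)$ forces the concatenated boundary word $W$ to be trivial in $G=\langle\mathcal{X}\mid\mathcal{R}\rangle$, so that a van~Kampen diagram for $W$ (dualized) supplies exactly the white vertices needed to close the graph into a $\mathcal{C}$--chart. The genus reduction by cutting $B$ to a $4g'$-gon and regluing along matching edge-patterns is also the right manoeuvre.

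Two minor remarks. First, be careful with the placement of the van~Kampen piece: $b_0$ must remain disjoint from $\Gamma$, so the filling disk should sit to one side of $b_0$ inside the small neighborhood, not literally cover it; your phrasing ``into a disk neighborhood of $b_0$'' is fine once read this way. Second, your closing caveat slightly overstates the difficulty: van~Kampen diagrams are planar by definition, so the embedding of the dual chart into a disk is automatic; the only genuine bookkeeping is matching its boundary edges, in cyclic order and with the correct orientations, to the free ends of the tails. With those small clarifications your plan is a faithful reconstruction of the chart-existence argument.
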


We next introduce several moves for charts. 
Let $\Gamma$ and $\Gamma'$ be two 
$\mathcal{C}$--charts on $B$ and $b_0$ a base point of $B$. 

Let $D$ be a disk embedded in $B-\{b_0\}$. 
Suppose that the boundary $\partial D$ of $D$ intersects $\Gamma$ and $\Gamma'$ 
transversely. 

\begin{defn} 
We say that $\Gamma'$ is obtained from $\Gamma$ by a {\it chart move of type W} 
if $\Gamma\cap (B-{\rm Int}\, D)=\Gamma'\cap (B-{\rm Int}\, D)$ and 
that both $\Gamma\cap D$ and $\Gamma'\cap D$ have no black vertices. 
We call chart moves of type W shown in Figure \ref{movesA} (a), (b), and (c), 
a {\it channel change}, a {\it birth/death of a hoop}, and 
a {\it birth/death of a pair of white vertices}, respectively. 
\end{defn}

\begin{figure}[ht!]
\labellist
\footnotesize \hair 2pt
\pinlabel $x$ [tr] at 0 66
\pinlabel $x$ [bl] at 41 106
\pinlabel $x$ [tr] at 80 66
\pinlabel $x$ [bl] at 121 106
\pinlabel (a) [t] at 61 84
\pinlabel $x$ [tl] at 184 76
\pinlabel empty at 258 86
\pinlabel (b) [t] at 209 84
\pinlabel $r$ at 57 20
\pinlabel $r^{\negthinspace -\negthinspace 1}$ at 87 23
\pinlabel (c) [t] at 132 19
\endlabellist
\centering
\includegraphics[scale=0.8]{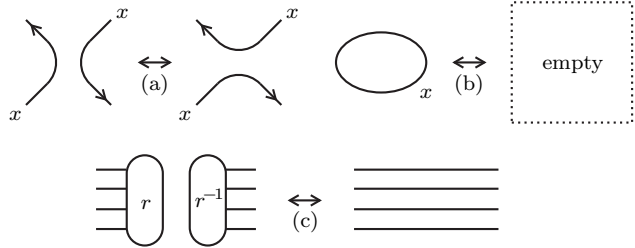}
\caption{Chart moves of type W}
\label{movesA}
\end{figure}

Let $s$ and $s'$ be elements of $\mathcal{S}$. 
Suppose that there exists a word $w$ in $\mathcal{X}\cup\mathcal{X}^{-1}$ such that 
two words $s'$ and $wsw^{-1}$ determine the same element of $G$. 

\begin{defn} 
If a $\mathcal{C}$--chart $\Gamma$ contains a black vertex of type $s$, 
then we can change a part of $\Gamma$ near the vertex by using a local replacement 
depicted in Figure \ref{movesB} to obtain another $\mathcal{C}$--chart $\Gamma'$. 
We say that $\Gamma'$ is obtained from $\Gamma$ by a {\it chart move of transition}. 
Note that the box labeled $T$ can be filled only with edges and white vertices. 
\end{defn}

\begin{figure}[ht!]
\labellist
\footnotesize \hair 2pt
\pinlabel $s$ [b] at 35 81
\pinlabel $s'$ [b] at 205 81
\pinlabel $w$ [bl] at 245 86
\pinlabel $w$ [tl] at 245 40
\pinlabel $s$ [bl] at 257 81
\pinlabel $T$ at 232 64
\endlabellist
\centering
\includegraphics[scale=0.8]{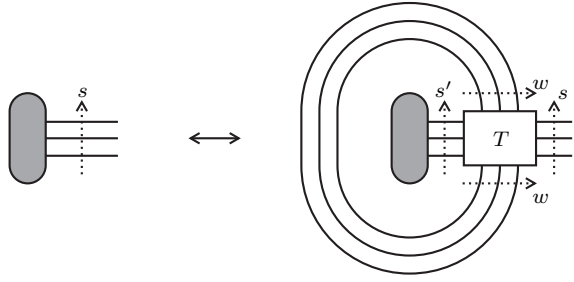}
\caption{Chart move of transition}
\label{movesB}
\end{figure}

\begin{defn} 
We say that $\Gamma'$ is obtained from $\Gamma$ by a {\it chart move of conjugacy type} 
if $\Gamma'$ is obtained from $\Gamma$ 
by a local replacement depicted in Figure \ref{movesC}. 
\end{defn}

\begin{figure}[ht!]
\labellist
\footnotesize \hair 2pt
\pinlabel $b_0$ [l] at 7 22
\pinlabel $b_0$ [l] at 117 22
\pinlabel $x$ [l] at 135 22
\pinlabel $b_0$ [l] at 197 22
\pinlabel $b_0$ [l] at 304 22
\pinlabel $x$ [l] at 323 22
\endlabellist
\centering
\includegraphics[scale=0.7]{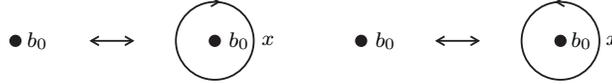}
\caption{Chart moves of conjugacy type}
\label{movesC}
\end{figure}

Let $\Delta$ and $\Delta'$ be finite subsets of $B$, and $b_0$ and $b'_0$ base points of 
$B-\Delta$ and $B-\Delta'$, respectively. 

\begin{defn} 
Let $\rho:\pi_1(B-\Delta, b_0)\rightarrow G$ and $\rho':\pi_1(B-\Delta',b'_0)\rightarrow G$ 
be $G$--monodromy representations. 
We say that $\rho$ is {\it equivalent} to $\rho'$ 
if there exist an orientation preserving diffeomorphism 
$h\co B\rightarrow B$ and an element $\alpha$ of $G$ 
which satisfies the following conditions: 
{\rm (i)} $h(\Delta)=\Delta';$ {\rm (ii)} $h(b_0)=b'_0;$ 
{\rm (iii)} $\rho'(h_{\#}(\gamma))=\alpha^{-1}\rho(\gamma)\alpha$ for every 
$\gamma\in\pi_1(B-\Delta,b_0)$. 
If we can choose such an $h$ isotopic to the identity relative to a given base point 
$b_0$, we say that $\rho$ is {\it strictly equivalent} to $\rho'$. 
\end{defn}

We state a classification theorem for $G$--monodromy representations in terms of charts and 
chart moves. 

\begin{thm}[(Kamada \cite{Kamada2007}, Hasegawa \cite{Hasegawa2006})]\label{classif} 
Let $\Gamma$ and $\Gamma'$ be $\mathcal{C}$--charts in $B$, 
and $\rho_{\Gamma}:\pi_1(B-\Delta_{\Gamma}, b_0)\rightarrow G$ 
and $\rho_{\Gamma'}:\pi_1(B-\Delta_{\Gamma'},b'_0)\rightarrow G$ 
the homomorphisms determined by $\Gamma$ and $\Gamma'$, respectively. 

{\rm (1)} $\rho_{\Gamma}$ is equivalent to $\rho_{\Gamma'}$ 
if and only if $\Gamma$ is transformed into $\Gamma'$ by a finite sequence of 
the following moves: 
{\rm (i)} chart moves of type W; 
{\rm (ii)} chart moves of transition; 
{\rm (iii)} chart moves of conjugacy type; 
and 
{\rm (iv)} sending $\Gamma$ to $\Gamma'$ by 
orientation preserving diffeomorphisms $h:B\rightarrow B$ which 
satisfies $h(b_0)=b'_0$. 

{\rm (2)} $\rho_{\Gamma}$ is strictly equivalent to $\rho_{\Gamma'}$ 
if and only if $\Gamma$ is transformed into $\Gamma'$ by a finite sequence of 
the moves {\rm (i), (ii), (iii),} and {\rm (iv)} provided that 
$h$ is isotopic to the identity relative to a given base point $b_0$. 
\end{thm}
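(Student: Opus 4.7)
The plan is to prove both directions separately, and in each case to prove parts (1) and (2) simultaneously (keeping track of whether $h$ is isotopic to the identity).

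For the \emph{sufficiency} direction I would check that each listed operation preserves equivalence. A chart move of type W takes place inside a disk $D \subset B - \{b_0\}$ that contains no black vertex, so $\Delta_\Gamma = \Delta_{\Gamma'}$ and, for any loop $\eta$ in $B - \Delta_\Gamma$ based at $b_0$, one can homotope $\eta$ across $D$ so that the intersection words with $\Gamma$ and $\Gamma'$ differ only by inserting or deleting relators in $\mathcal{R} \cup \mathcal{R}^{-1}$; hence $\rho_\Gamma = \rho_{\Gamma'}$ on the nose. The three pictured instances (channel change, birth/death of a hoop, birth/death of a pair of white vertices) each correspond directly to one such relator insertion. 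For a chart move of transition, the move replaces a black vertex of type $s$ by one of type $s'$ where $s'$ and $wsw^{-1}$ represent the same element of $G$; this changes the element associated to the meridional loop around that vertex by conjugation by $w$, which is exactly the freedom allowed by the set $\mathcal{S}$ and is still the same $G$-monodromy representation in $\mathcal{M}(B,\Delta,b_0;\mathcal{C})$. A chart move of conjugacy type drags an edge labeled $x$ past $b_0$, which conjugates every $\rho_\Gamma([\ell])$ by the corresponding generator of $G$, giving a (strict) equivalence with $\alpha = x^{\pm 1}$. Finally, transport by a diffeomorphism $h$ is sufficiency of (iv) by definition.

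For the \emph{necessity} direction I would reduce the comparison of $\Gamma$ and $\Gamma'$ to a local problem in stages. First, use the diffeomorphism $h$ supplied by the equivalence to replace $\Gamma'$ by $h^{-1}(\Gamma')$; this is a move of type (iv) and, in part (2), preserves the isotopy class relative to $b_0$. After this reduction we may assume $\Delta_\Gamma = \Delta_{\Gamma'}$ and $b_0 = b_0'$, with $\rho_{\Gamma'} = \alpha^{-1}\rho_\Gamma \alpha$. Next, write $\alpha$ as a word in $\mathcal{X} \cup \mathcal{X}^{-1}$ and insert the corresponding chain of edges through $b_0$ into $\Gamma'$ using chart moves of conjugacy type; this reduces to the case $\rho_\Gamma = \rho_{\Gamma'}$ as homomorphisms on $\pi_1(B - \Delta, b_0)$.

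The heart of the argument, and the step I expect to be the main obstacle, is the following claim: two $\mathcal{C}$-charts with the same black vertex set and the same associated homomorphism $\rho$ are related by a finite sequence of chart moves of type W and of transition. I would prove this by choosing a Hurwitz arc system $\mathcal{A}$ for $\Delta$ together with a cut system on $B$ yielding a standard polygonal presentation of $\pi_1(B-\Delta,b_0)$, and then using type W moves to push all white vertices and edges of both $\Gamma$ and $\Gamma'$ off a regular neighborhood of the dual spine, so that each chart is concentrated in a disk neighborhood of each arc of $\mathcal{A}$ and each dual $1$-cell. Transition moves let us arrange that the black vertices of $\Gamma$ and $\Gamma'$ carry identical labels in $\mathcal{S}$. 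The problem then localizes to the disk setting, where the chart along each arc encodes a word in the generators representing a fixed element of $G$, and the classical fact that any two such words are connected by relator insertions translates precisely into a finite sequence of channel changes and births/deaths of hoops and white-vertex pairs, following the inductive normal-form argument of Kamada \cite{Kamada2007}. Part (2) differs only in that every diffeomorphism used in the reduction must be kept isotopic to the identity rel $b_0$, which is automatic throughout because all further moves used after step (iv) are supported in the interior of $B - \{b_0\}$.
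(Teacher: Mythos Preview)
The paper does not supply its own proof of this theorem: it is stated with attribution to Kamada \cite{Kamada2007} and Hasegawa \cite{Hasegawa2006} and then used as a black box, so there is no in-paper argument to compare against. Your sketch follows the expected shape of Kamada's original argument---checking that each move preserves the (strict) equivalence class of the monodromy, then normalizing both charts along a Hurwitz arc system and cut system so that the residual comparison reduces to the word problem in $G$ on each cell, which is exactly what births/deaths of white-vertex pairs and channel changes implement.

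Two small points worth tightening. First, your description of the transition move is slightly off: the black vertex itself keeps type $s$, and the box $T$ (containing only white vertices) realizes the equality $s' = wsw^{-1}$ in $G$, so the monodromy representation is literally unchanged rather than ``changed by a conjugation allowed by $\mathcal{S}$''. Second, in the necessity direction the genuinely delicate step is not only the disk-local word problem but also arranging, via transition moves, that corresponding black vertices of $\Gamma$ and $\Gamma'$ carry the \emph{same} element of $\mathcal{S}$ (not merely conjugate ones); you mention this but it deserves a sentence explaining why a single transition move suffices for each vertex once the meridional images agree in $G$. With those clarifications your outline matches the standard proof.
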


We now define three explicit $\mathcal{C}$s corresponding to three groups 
$\mathcal{M}_{0,2g+2}$, $B_{2g+2}(S^2)$, and $\mathcal{H}_g$. 

For the group $\mathcal{M}_{0,2g+2}$, we set 
\begin{align*}
\mathcal{C}_0 & :=(\mathcal{X}_0,\mathcal{R}_0,\mathcal{S}_0), \\
\mathcal{X}_0 & :=\{\xi_1,\xi_2,\ldots ,\xi_{2g+1}\}, \\
\mathcal{R}_0 & :=\{r_1(i,j)\, |\, 1\leq i<j-1\leq 2g\}
\cup \{r_2(i)\, |\, i=1,\ldots ,2g\}\cup \{r_3,r_4 \}, \\
\mathcal{S}_0 & :=\{\ell_0(i)^{\pm 1}\, |\, i=1,\ldots ,2g+1\} 
\cup \{\ell_h^{\pm 1}\, |\, h=1,\ldots ,[g/2]\}
\end{align*}
for $g\geq 1$, where 
\begin{align*}
& r_1(i,j) :=\xi_i\xi_j\xi_i^{-1}\xi_j^{-1} \;\; (1\leq i<j-1\leq 2g), \\
& r_2(i) :=\xi_i\xi_{i+1}\xi_i\xi_{i+1}^{-1}\xi_i^{-1}\xi_{i+1}^{-1} \;\; (i=1,\ldots ,2g), \\
& r_3:=\xi_1\xi_2\cdots\xi_{2g+1}\xi_{2g+1}\cdots\xi_2\xi_1, 
\quad r_4:=(\xi_1\xi_2\cdots\xi_{2g+1})^{2g+2}, \\
& \ell_0(i):=\xi_i \;\; (i=1,\ldots ,2g+1), 
\quad \ell_h:=(\xi_1\xi_2\cdots\xi_{2h})^{4h+2} \;\; (h=1,\ldots ,[g/2]). 
\end{align*}
The relator $r_4$ corresponds to the Dirac braid 
and it is called the {\it Dirac braid relator}. 
Vertices of types $\ell_0(i)^{\pm 1}$, $r_1(i,j)$, $r_2(i)$, $r_3$, $r_4$, $\ell_h$ 
in $\mathcal{C}_0$--charts 
are depicted in Figure  \ref{verticesA} and Figure \ref{verticesB}, 
where the label $\xi_i$ is denoted by $i$ for short. 

\begin{figure}[ht!]
\labellist
\footnotesize \hair 2pt
\pinlabel $i$ [l] at 46 29
\pinlabel $i$ [l] at 46 5
\pinlabel $i$ [tr] at 105 3
\pinlabel $j$ [br] at 105 33
\pinlabel $i$ [bl] at 136 33
\pinlabel $j$ [tl] at 136 3
\pinlabel $i$ [tr] at 174 3
\pinlabel $j$ [br] at 174 33
\pinlabel $i$ [bl] at 204 33
\pinlabel $j$ [tl] at 204 3
\pinlabel $=$ [l] at 150 17
\pinlabel $i$ [tr] at 268 0
\pinlabel $i\negthinspace +\negthinspace 1$ [r] at 261 17
\pinlabel $i$ [br] at 270 34
\pinlabel $i\negthinspace +\negthinspace 1$ [bl] at 306 34
\pinlabel $i$ [l] at 313 17
\pinlabel $i\negthinspace +\negthinspace 1$ [tl] at 306 0
\endlabellist
\centering
\includegraphics[scale=0.8]{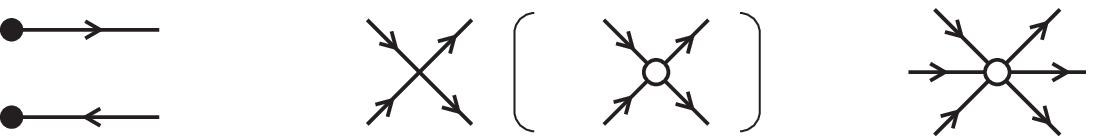}
\vspace{3mm}
\caption{Vertices of type $\ell_0(i)^{\pm 1}$, $r_1(i,j)$, $r_2(i)$}
\label{verticesA}
\end{figure}

\begin{figure}[ht!]
\labellist
\footnotesize \hair 2pt
\pinlabel $1$ [r] at 0 86
\pinlabel $2g+1$ [r] at 0 65
\pinlabel $2g+1$ [r] at 0 55
\pinlabel $1$ [r] at 0 36
\pinlabel $2g+1$ [r] at 133 112
\pinlabel $1$ [r] at 133 91
\pinlabel $2g+1$ [r] at 133 56
\pinlabel $1$ [r] at 133 37
\pinlabel $2g+1$ [r] at 133 27
\pinlabel $1$ [r] at 133 7
\pinlabel $1$ [r] at 257 114
\pinlabel $2h$ [r] at 257 93
\pinlabel $1$ [r] at 257 83
\pinlabel $2h$ [r] at 257 65
\pinlabel $1$ [r] at 257 30
\pinlabel $2h$ [r] at 257 8
\endlabellist
\centering
\includegraphics[scale=0.75]{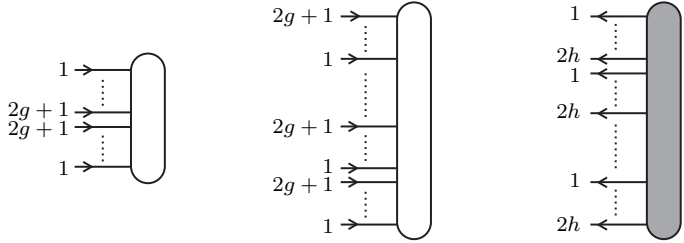}
\caption{Vertices of type $r_3$, $r_4$, $\ell_h$}
\label{verticesB}
\end{figure}
For the group $B_{2g+2}(S^2)$, we set 
\begin{align*}
\tilde{\mathcal{C}} & :=(\tilde{\mathcal{X}},\tilde{\mathcal{R}},\tilde{\mathcal{S}}), \\
\tilde{\mathcal{X}} & :=\{x_1,x_2,\ldots ,x_{2g+1}\}, \\
\tilde{\mathcal{R}} & :=\{\tilde{r}_1(i,j)\, |\, 1\leq i<j-1\leq 2g\}
\cup \{\tilde{r}_2(i)\, |\, i=1,\ldots ,2g\}\cup \{\tilde{r}_3 \}, \\
\tilde{\mathcal{S}} & :=\{\tilde{\ell}_0(i)^{\pm 1}\, |\, i=1,\ldots ,2g+1\} 
\cup \{\tilde{\ell}_h^{\pm 1}\, |\, h=1,\ldots ,[g/2]\}
\end{align*}
for $g\geq 1$, where 
\begin{align*}
& \tilde{r}_1(i,j) :=x_ix_jx_i^{-1}x_j^{-1} \;\; (1\leq i<j-1\leq 2g), \\
& \tilde{r}_2(i) :=x_ix_{i+1}x_ix_{i+1}^{-1}x_i^{-1}x_{i+1}^{-1} \;\; (i=1,\ldots ,2g), \\
& \tilde{r}_3:=x_1x_2\cdots x_{2g+1}x_{2g+1}\cdots x_2x_1, \\
& \tilde{\ell}_0(i):=x_i \;\; (i=1,\ldots ,2g+1), 
\quad \tilde{\ell}_h:=(x_1x_2\cdots x_{2h})^{4h+2} \;\; (h=1,\ldots ,[g/2]). 
\end{align*}
Vertices of types $\tilde{\ell}_0(i)^{\pm 1}$, $\tilde{r}_1(i,j)$, 
$\tilde{r}_2(i)$, $\tilde{r}_3$, $\tilde{\ell}_h$ 
in $\tilde{\mathcal{C}}$--charts 
are similar to those of types 
$\ell_0(i)^{\pm 1}$, $r_1(i,j)$, $r_2(i)$, $r_3$, $\ell_h$ 
in $\mathcal{C}_0$--charts 
(cf. Figure  \ref{verticesA} and Figure \ref{verticesB}), respectively. 

For the group $\mathcal{H}_g$, we set 
\begin{align*}
\hat{\mathcal{C}} & :=(\hat{\mathcal{X}},\hat{\mathcal{R}},\hat{\mathcal{S}}), \\
\hat{\mathcal{X}} & :=\{\zeta_1,\zeta_2,\ldots ,\zeta_{2g+1}\}, \\
\hat{\mathcal{R}} & :=\{\hat{r}_1(i,j)\, |\, 1\leq i<j-1\leq 2g\}
\cup \{\hat{r}_2(i)\, |\, i=1,\ldots ,2g\}\cup \{\hat{r}_3,\hat{r}_4,\hat{r}_5 \}, \\
\hat{\mathcal{S}} & :=\{\hat{\ell}_0(i)^{\pm 1}\, |\, i=1,\ldots ,2g+1\} 
\cup \{\hat{\ell}_h^{\pm 1}\, |\, h=1,\ldots ,[g/2]\}
\end{align*}
for $g\geq 1$, where 
\begin{align*}
& \hat{r}_1(i,j) :=\zeta_i\zeta_j\zeta_i^{-1}\zeta_j^{-1} \;\; (1\leq i<j-1\leq 2g), \\
& \hat{r}_2(i) :=\zeta_i\zeta_{i+1}\zeta_i
\zeta_{i+1}^{-1}\zeta_i^{-1}\zeta_{i+1}^{-1} \;\; (i=1,\ldots ,2g), \\
& \hat{r}_3:=(\zeta_1\zeta_2\cdots\zeta_{2g+1}\zeta_{2g+1}\cdots\zeta_2\zeta_1)^2, 
\quad \hat{r}_4:=(\zeta_1\zeta_2\cdots\zeta_{2g+1})^{2g+2}, \\
& \hat{r}_5:=[\zeta_1,\zeta_1\zeta_2\cdots\zeta_{2g+1}\zeta_{2g+1}\cdots\zeta_2\zeta_1], \\
& \hat{\ell}_0(i):=\zeta_i \;\; (i=1,\ldots ,2g+1), 
\quad \hat{\ell}_h:=(\zeta_1\zeta_2\cdots\zeta_{2h})^{4h+2} \;\; (h=1,\ldots ,[g/2]). 
\end{align*}
Vertices of types $\hat{\ell}_0(i)^{\pm 1}$, $\hat{r}_1(i,j)$, 
$\hat{r}_2(i)$, $\hat{r}_4$, $\hat{\ell}_h$ 
in $\hat{\mathcal{C}}$--charts 
are similar to those of types 
$\ell_0(i)^{\pm 1}$, $r_1(i,j)$, $r_2(i)$, $r_4$, $\ell_h$ 
in $\mathcal{C}_0$--charts 
(cf. Figure  \ref{verticesA} and Figure \ref{verticesB}), respectively. 
Vertices of types $\hat{r}_3$ and $\hat{r}_5$ 
in $\hat{\mathcal{C}}$--charts 
are depicted in Figure  \ref{verticesC}, 
where the label $\zeta_i$ is denoted by $i$ for short. 

\begin{figure}[ht!]
\labellist
\footnotesize \hair 2pt
\pinlabel $1$ [r] at 0 115
\pinlabel $2g+1$ [r] at 0 94
\pinlabel $2g+1$ [r] at 0 84
\pinlabel $1$ [r] at 0 67
\pinlabel $1$ [r] at 0 57
\pinlabel $2g+1$ [r] at 0 38
\pinlabel $2g+1$ [r] at 0 28
\pinlabel $1$ [r] at 0 10
\pinlabel $1$ [r] at 122 86
\pinlabel $2g+1$ [r] at 122 65
\pinlabel $2g+1$ [r] at 122 55
\pinlabel $1$ [r] at 122 37
\pinlabel $1$ [l] at 196 86
\pinlabel $2g+1$ [l] at 196 65
\pinlabel $2g+1$ [l] at 196 55
\pinlabel $1$ [l] at 196 37
\pinlabel $i$ [l] at 162 113
\pinlabel $i$ [l] at 162 7
\endlabellist
\centering
\includegraphics[scale=0.75]{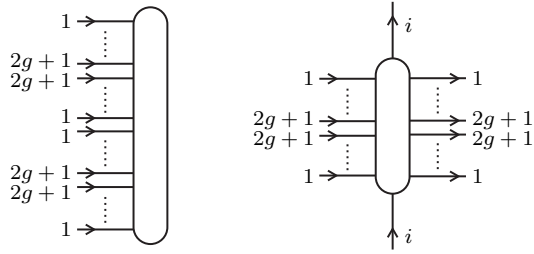}
\caption{Vertices of type $\hat{r}_3$ and $\hat{r}_5$}
\label{verticesC}
\end{figure}

Let $B$ be a connected closed oriented surface and $\Gamma$ a chart in $B$. 
We denote the number of white vertices of type $\hat{r}_1(i,j)$ 
(resp. $\hat{r}_2(i),\, \hat{r}_3,\, \hat{r}_4,\, \hat{r}_5$) 
minus the number of white vertices 
of type $\hat{r}_1(i,j)^{-1}$ 
(resp. $\hat{r}_2(i)^{-1},\, \hat{r}_3^{-1},\, \hat{r}_4^{-1},\, \hat{r}_5^{-1}$)
included in $\Gamma$ by $m_1(i,j)(\Gamma)$ 
(resp. $m_2(i)(\Gamma),\, m_3(\Gamma),\, m_4(\Gamma),\, m_5(\Gamma)$). 
Similarly, 
we denote the number of black vertices of type $\hat{\ell}_0(i)^{\pm 1}$ 
(resp. $\hat{\ell}_h^{\pm 1}$) included in $\Gamma$ 
by $n_0^{\pm}(i)(\Gamma)$ (resp. $n_h^{\pm}(\Gamma)$), 
and set $n_0(i)(\Gamma):=n_0^+(i)(\Gamma)-n_0^-(i)(\Gamma)$ 
(resp. $n_h(\Gamma):=n_h^+(\Gamma)-n_h^-(\Gamma)$) 
and $n_0^{\pm}(\Gamma):=\sum_{i=0}^{2g}n_0^{\pm}(i)(\Gamma)$.

\subsection{Charts and hyperelliptic Lefschetz fibrations} 

Combining Lemma \ref{monod} and Lemma \ref{exist1} with Theorem \ref{exist} and 
Theorem \ref{classif} for $\hat{\mathcal{C}}$--charts, 
we obtain classification theorems for hyperelliptic Lefschetz fibrations 
in terms of $\hat{\mathcal{C}}$--charts. 

\begin{prop}\label{correspondence} 
Suppose that $g$ is greater than one. 

{\rm (1)} Let $(f,\Phi)$ be a hyperelliptic Lefschetz fibration of genus $g$ over $B$ and 
$\rho$ the monodromy representation of $f$ with respect to $\Phi$. 
Then there exists a $\hat{\mathcal{C}}$--chart $\Gamma$ in $B$ 
such that the homomorphism $\rho_{\Gamma}$ 
determined by $\Gamma$ is equal to $\rho$. 

{\rm (2)} For every $\hat{\mathcal{C}}$--chart $\Gamma$ in $B$, 
there exists a hyperelliptic Lefschetz fibration 
$(f,\Phi)$ of genus $g$ over $B$ such that the monodromy representation of $f$ 
with respect to $\Phi$ 
is equal to the homomorphism $\rho_{\Gamma}$ determined by $\Gamma$. 
\end{prop}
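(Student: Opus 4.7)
The plan is to deduce both assertions from the general correspondence between $G$--monodromy representations and $\mathcal{C}$--charts (Theorem \ref{exist} and Theorem \ref{classif}) combined with the construction of Lefschetz fibrations from monodromy data (Lemma \ref{exist1}). The bridge that must be established in each direction is that meridional generators correspond precisely to Dehn twists along $I$--invariant curves.

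For (1), let $\rho\co\pi_1(B-\Delta,b_0)\rightarrow\mathcal{H}_g$ be the monodromy. First I would verify that $\rho$ belongs to $\mathcal{M}(B,\Delta,b_0;\hat{\mathcal{C}})$. For any meridional loop $\ell$ around a critical value $b$, the element $\rho([\ell])$ is the Dehn twist $T_c$ along the vanishing cycle $c$, right-handed or left-handed according to the type of the critical point. Because $T_c\in\mathcal{H}_g$, it commutes with $\iota$, whence $T_c=\iota T_c\iota^{-1}=T_{I(c)}$, so the isotopy class of $c$ is $I$--invariant; one may therefore take $c$ itself to be $I$--invariant. The classical "change of coordinates" in the hyperelliptic setting (obtained via the Birman--Hilden correspondence with arcs on the quotient $S^2$ with $2g+2$ marked points) shows that $\mathcal{H}_g$ acts transitively on isotopy classes of $I$--invariant non-separating simple closed curves and on $I$--invariant separating curves bounding a genus--$h$ subsurface. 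Hence $T_c$ is conjugate in $\mathcal{H}_g$ to $\zeta_i^{\pm 1}=\hat{\ell}_0(i)^{\pm 1}$ in the non-separating case and to $\sigma_h^{\pm 1}=\hat{\ell}_h^{\pm 1}$ in the type--$\mathrm{II}_h$ case, i.e.\ to an element of $\hat{\mathcal{S}}$. This places $\rho$ in $\mathcal{M}(B,\Delta,b_0;\hat{\mathcal{C}})$, and Theorem \ref{exist} then produces a $\hat{\mathcal{C}}$--chart $\Gamma$ with $\rho_\Gamma=\rho$.

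For (2), let $\Gamma$ be a $\hat{\mathcal{C}}$--chart in $B$ with set of black vertices $\Delta_\Gamma$, and consider the associated homomorphism $\rho_\Gamma\co\pi_1(B-\Delta_\Gamma,b_0)\rightarrow\mathcal{H}_g$. By the very definition of $\hat{\mathcal{C}}$--chart, the intersection word of a meridian loop around any black vertex is a cyclic permutation of some element of $\hat{\mathcal{S}}$, and every element of $\hat{\mathcal{S}}$ is a Dehn twist (the $\hat{\ell}_0(i)^{\pm 1}$ are Dehn twists along $c_i$, and $\hat{\ell}_h^{\pm 1}=(\zeta_1\cdots\zeta_{2h})^{\pm(4h+2)}$ coincides with the Dehn twist along the chain curve $s_h$ by the chain relation). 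Consequently $\rho_\Gamma$ sends each meridional generator coming from a Hurwitz arc system around $\Delta_\Gamma$ to a Dehn twist. Lemma \ref{exist1} then furnishes a hyperelliptic Lefschetz fibration $(f,\Phi)$ of genus $g$ over $B$ whose monodromy representation agrees with $\rho_\Gamma$.

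The step I expect to be most delicate is the conjugacy statement used in (1): identifying, within $\mathcal{H}_g$, the conjugacy class of an arbitrary Dehn twist $T_c$ for $c$ an $I$--invariant simple closed curve with one of the model elements $\hat{\ell}_0(i)^{\pm 1}$ or $\hat{\ell}_h^{\pm 1}$. The analogous fact in $\mathcal{M}_g$ (a Dehn twist depends only on the topological type of the curve up to the mapping class group action) is standard, but transferring it to $\mathcal{H}_g$ requires the Birman--Hilden bookkeeping that relates $I$--invariant curves on $\Sigma_g$ to arcs between marked points on $S^2$; the rest of the argument is then a direct invocation of the existence theorems for charts and for Lefschetz fibrations.
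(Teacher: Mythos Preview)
Your proposal is correct and follows the same route the paper indicates: the paper does not write out a proof but simply states that the proposition is obtained by combining Lemma~\ref{exist1} with Theorem~\ref{exist} (and Theorem~\ref{classif}) for $\hat{\mathcal{C}}$--charts. You supply exactly those invocations and, in addition, spell out the hypothesis-checking the paper leaves implicit, namely that $\rho\in\mathcal{M}(B,\Delta,b_0;\hat{\mathcal{C}})$ in part~(1) and that each element of $\hat{\mathcal{S}}$ (hence each meridional image of $\rho_\Gamma$) is a Dehn twist in part~(2); the Birman--Hilden argument you give for the conjugacy step is the standard way to justify the former.
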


We call such $\Gamma$ as in Proposition \ref{correspondence} (1) 
a $\hat{\mathcal{C}}$--chart {\it corresponding to $(f,\Phi)$}, 
and such $(f,\Phi)$ as in Proposition \ref{correspondence} (2) 
a hyperelliptic Lefschetz fibration {\it described by $\Gamma$}. 

\begin{prop}\label{isomorphism} 
Suppose that $g$ is greater than one. 
Let $(f,\Phi)$ and $(f',\Phi')$ be hyperelliptic Lefschetz fibrations of genus $g$, 
and $\Gamma$ and $\Gamma'$ $\hat{\mathcal{C}}$--charts in $B$ 
corresponding to $(f,\Phi)$ and $(f',\Phi')$, respectively. 

{\rm (1)} $(f,\Phi)$ is $\mathcal{H}$--isomorphic to $(f',\Phi')$ 
if and only if $\,\Gamma$ is transformed into $\Gamma'$ by a finite sequence of 
the following moves: 
{\rm (i)} chart moves of type W; 
{\rm (ii)} chart moves of transition; 
{\rm (iii)} chart moves of conjugacy type; 
and 
{\rm (iv)} sending $\Gamma$ to $\Gamma'$ by 
orientation preserving diffeomorphisms $h:B\rightarrow B$ which 
satisfies $h(b_0)=b'_0$. 

{\rm (2)} $(f,\Phi)$ is strictly $\mathcal{H}$--isomorphic to $(f',\Phi')$ 
if and only if $\Gamma$ is transformed into $\Gamma'$ by a finite sequence of 
the moves {\rm (i), (ii), (iii),} and {\rm (iv)} provided that 
$h$ is isotopic to the identity relative to a given base point $b_0$. 
\end{prop}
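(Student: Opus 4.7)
The plan is to deduce both parts of Proposition \ref{isomorphism} by combining Lemma \ref{monod} (which characterizes $\mathcal{H}$--isomorphism purely in terms of monodromy representations into $\mathcal{H}_g$) with the general chart classification Theorem \ref{classif}, applied with $G = \mathcal{H}_g$ and $\mathcal{C} = \hat{\mathcal{C}}$. The bridge between these two ingredients is Proposition \ref{correspondence}, which tells us that $\Gamma$ and $\Gamma'$ realize the monodromy representations $\rho$ and $\rho'$ of $(f,\Phi)$ and $(f',\Phi')$ exactly (i.e.\ $\rho_{\Gamma} = \rho$ and $\rho_{\Gamma'} = \rho'$). So the proof has essentially no new content beyond checking that these two packages line up.

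First I would verify that $\hat{\mathcal{C}} = (\hat{\mathcal{X}}, \hat{\mathcal{R}}, \hat{\mathcal{S}})$ really does present $\mathcal{H}_g$: by Theorem \ref{pres} the generators $\hat{\mathcal{X}}$ and relators $\hat{\mathcal{R}} = \{\hat{r}_1(i,j), \hat{r}_2(i), \hat{r}_3, \hat{r}_4, \hat{r}_5\}$ give a finite presentation of $\mathcal{H}_g$, and the set $\hat{\mathcal{S}}$ consists of the elements $\hat{\ell}_0(i)^{\pm 1}$ and $\hat{\ell}_h^{\pm 1}$, which (after conjugation) are exactly the Dehn twists that can appear as local monodromies around the critical values of a hyperelliptic Lefschetz fibration (type $\mathrm{I}^\pm$ and type $\mathrm{II}_h^\pm$ singular fibers). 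Consequently, for a hyperelliptic Lefschetz fibration $(f,\Phi)$ the monodromy representation $\rho$ lies in $\mathcal{M}(B, \Delta, b_0; \hat{\mathcal{C}})$, so Theorem \ref{exist} and Theorem \ref{classif} are applicable; this is exactly the content used in Proposition \ref{correspondence}.

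Next, for part (1), I would read Lemma \ref{monod}(1) as saying that $(f,\Phi) \,\mathcal{H}$--isomorphic to $(f',\Phi')$ is equivalent to $\rho$ being \emph{equivalent} to $\rho'$ as $\mathcal{H}_g$--monodromy representations, in the exact sense of the definition preceding Theorem \ref{classif} (an orientation preserving diffeomorphism $h \co B \to B$ with $h(\Delta) = \Delta'$, $h(b_0) = b_0'$, and $\rho'(h_{\#}(\gamma)) = \alpha^{-1}\rho(\gamma)\alpha$ for some $\alpha \in \mathcal{H}_g$). Since $\rho = \rho_{\Gamma}$ and $\rho' = \rho_{\Gamma'}$, Theorem \ref{classif}(1) then rephrases this equivalence as the statement that $\Gamma$ can be transformed into $\Gamma'$ by a finite sequence of the moves (i)--(iv). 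Chaining these two equivalences proves (1). Part (2) is obtained by the same argument, replacing Lemma \ref{monod}(1) and Theorem \ref{classif}(1) with their strict counterparts Lemma \ref{monod}(2) and Theorem \ref{classif}(2), so that the diffeomorphism $h$ is further constrained to be isotopic to the identity relative to $b_0$.

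The whole argument is essentially a bookkeeping exercise, so there is no real obstacle; the only subtle point to keep track of is the compatibility between the ``equivalence'' relation on monodromy representations used in Theorem \ref{classif} and the one dictated by Lemma \ref{monod}. Because Lemma \ref{monod} requires the conjugating element $\alpha$ to lie in $\mathcal{H}_g$ rather than in the ambient $\mathcal{M}_g$, it is essential that Theorem \ref{classif} is applied with $G = \mathcal{H}_g$ (not with $\mathcal{M}_g$ or some central extension thereof); this is automatic once we have fixed $\hat{\mathcal{C}}$ as the chart data. With that observation made, the equivalence in both directions of (1) and (2) follows immediately.
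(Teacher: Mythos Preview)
Your proposal is correct and matches the paper's own approach: the paper states (just before Proposition \ref{correspondence}) that these classification results are obtained by combining Lemma \ref{monod} and Lemma \ref{exist1} with Theorem \ref{exist} and Theorem \ref{classif} for $\hat{\mathcal{C}}$--charts, and then states Proposition \ref{isomorphism} without further proof. Your write-up simply makes this combination explicit, including the key observation that the ``equivalence'' notions in Lemma \ref{monod} and Theorem \ref{classif} coincide because $G = \mathcal{H}_g$.
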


We end this subsection with a definition of fiber sums of Lefschetz fibrations. 
Let $f\co M\rightarrow B$ and $f'\co M'\rightarrow B'$ be Lefschetz fibrations of genus $g$, 
and $\Delta$ and $\Delta'$ the sets of critical values of $f$ and $f'$, respectively. 
Take base points $b_0\in B-\Delta$ and $b_0'\in B'-\Delta'$ for $f$ and $f'$, 
orientation preserving diffeomorphisms $\Phi\co \Sigma_g\rightarrow f^{-1}(b_0)$ 
and $\Phi'\co\Sigma_g\rightarrow f'^{-1}(b'_0)$, 
and small disks $D_0\subset B-\Delta$ and $D'_0\subset B-\Delta'$ near $b_0$ and $b'_0$, 
respectively. 

\begin{defn}\label{sum} 
Let $\Psi\co \Sigma_g\rightarrow \Sigma_g$ be an orientation preserving diffeomorphism 
and $r\co \partial D_0\rightarrow \partial D'_0$ an orientation reversing diffeomorphism. 
The new manifold $M\#_F M'$ obtained by glueing 
$M-f^{-1}({\rm Int}\, D_0)$ and $M'-f'^{-1}({\rm Int}\, D'_0)$ by 
$(\Phi'\circ\Psi\circ\Phi^{-1})\times r$ 
admits a Lefschetz fibration 
$f\#_{\Psi}\, f'\co M\#_F M'\rightarrow B\# B'$ of genus $g$. 
We call $f\#_{\Psi}\, f'$ the {\it fiber sum} of $f$ and $f'$ with respect to $\Psi$. 
Although 
the diffeomorphism type of $M\#_F M'$ and the isomorphism class of $f\#_{\Psi}\, f'$ 
depend on a choice of the diffeomorphism $\Psi$ in general, 
we often abbreviate $f\#_{\Psi}\, f'$ as $f\#\, f'$. 
\end{defn}

Let $\rho\co\pi_1(B-\Delta,b_0)\rightarrow \mathcal{M}_g$ and 
$\rho'\co\pi_1(B'-\Delta',b'_0)\rightarrow \mathcal{M}_g$ be the 
monodromy representations of $f$ and $f'$ with respect to $\Phi$ and $\Phi'$, 
respectively. 
Consider the fiber sum $f\#_{\Psi}\, f'$ of $f$ and $f'$ with respect to $\Psi$ as in 
Definition \ref{sum} and the monodromy representation 
$\tilde{\rho}\co\pi_1(B\# B'-(\Delta\cup\Delta'),b_0)\rightarrow \mathcal{M}_g$ 
with respect to $\Phi$. 

\begin{defn} 
Suppose that $(f,\Phi)$ and $(f',\Phi')$ are hyperelliptic Lefschetz fibrations and 
the isotopy class of $\Psi$ belongs to $\mathcal{H}_g$. 
We call the pair $(f\#_{\Psi}\, f',\Phi)$ the {\it $\mathcal{H}$--fiber sum} of 
$(f,\Phi)$ and $(f',\Phi')$ with respect to $\Psi$. 
The $\mathcal{H}$--fiber sum $(f\#_{\Psi}\, f',\Phi)$ is 
a hyperelliptic Lefschetz fibration because it is easily seen that the image of $\tilde{\rho}$ 
is included in $\mathcal{H}_g$. 
\end{defn}

Let $\Gamma$ and $\Gamma'$ be $\hat{\mathcal{C}}$--charts 
corresponding to $(f,\Phi)$ and $(f',\Phi')$, 
and $D_0$ and $D'_0$ small disks near $b_0$ and $b_0'$ disjoint from 
$\Gamma$ and $\Gamma'$, respectively. 
Connecting $B-{\rm Int}\, D_0$ with $B'-{\rm Int}\, D'_0$ by a tube, 
we have a connected sum $B\# B'$ of $B$ and $B'$. 
Let $w$ be a word in $\hat{\mathcal{X}}\cup\hat{\mathcal{X}}^{-1}$ which represents 
the mapping class of $\Psi$ in $\mathcal{H}_g$. 
Let $\Gamma\#_w\Gamma'$ be the union of $\Gamma$, $\Gamma'$, and 
hoops on the tube representing $w$. 
Then the $\mathcal{H}$--fiber sum $(f\#_{\Psi}\, f',\Phi)$ is described by 
this new chart $\Gamma\#_w\Gamma'$ in $B\# B'$ with base point $b_0$. 
If the word $w$ is trivial, then the chart $\Gamma\#_w\Gamma'$ is denoted also 
by $\Gamma\oplus\Gamma'$, 
which is called a {\it product} of $\Gamma$ and $\Gamma'$.


\section{Counting Dirac braid relators}


In this section we define a $\mathbb{Z}_2$--valued invariant for $\mathcal{C}_0$--charts 
in a given surface and prove its invariance under several chart moves. 

We first give a precise definition of the invariant. 
Let $B$ be a connected closed oriented surface and $g$ an integer greater than one. 

\begin{defn}\label{w} 
For a $\mathcal{C}_0$--chart $\Gamma$ in $B$, 
we denote the number modulo $2$ of white vertices of types $r_4^{\pm 1}$ 
included in $\Gamma$ by $w(\Gamma)$. 
(Note that we named $r_4$ the Dirac braid relator in \S \ref{chartdes}.) 
\end{defn}

The value of $w$ is obviously invariant under chart moves of conjugacy type. 
If a $\mathcal{C}_0$--chart $\Gamma$ with base point $b_0$ is sent to 
another $\mathcal{C}_0$--chart $\Gamma'$ with base point $b'_0$ by 
an orientation preserving diffeomorphism $h:B\rightarrow B$ which satisfies $h(b_0)=b'_0$, 
we have $w(\Gamma)=w(\Gamma')$. 

\begin{prop}\label{typew}
The value of $w$ is invariant under chart moves of type W. 
\end{prop}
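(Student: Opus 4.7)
The plan is to verify the invariance of $w$ by a direct case analysis over the three basic chart moves of type W depicted in Figure \ref{movesA}. For a channel change (Figure \ref{movesA}(a)), the replacement inside $D$ only rearranges edges and introduces or removes no vertices at all, so the count of $r_4^{\pm 1}$-vertices is unchanged. Similarly, for a birth/death of a hoop (Figure \ref{movesA}(b)), a hoop is by definition a closed edge without vertices, so no white vertices are affected and $w$ is preserved.

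The remaining, and conceptually only, case is (c), the birth/death of a pair of white vertices of types $r$ and $r^{-1}$ for some $r\in\mathcal{R}_0$. If $r\notin\{r_4,r_4^{-1}\}$, neither of the two vertices being created or destroyed is of type $r_4^{\pm 1}$, so $w$ is trivially preserved. If $r=r_4$, then both of the two vertices involved are of type $r_4^{\pm 1}$, so the total count of such vertices shifts by exactly $\pm 2$; this is $0\pmod 2$, so $w$ is again preserved. Reducing modulo $2$ in Definition \ref{w} is precisely the feature that makes this case work, and is the reason $w$ takes values in $\mathbb{Z}_2$ rather than $\mathbb{Z}$.

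Since every chart move of type W is by definition a local replacement inside a disk $D\subset B-\{b_0\}$ containing no black vertices, and since by the general chart theory of Kamada \cite{Kamada2007} any such replacement decomposes into a finite sequence of the three basic moves above, the invariance of $w$ under arbitrary chart moves of type W follows from these three checks. I expect no significant technical obstacle in this proposition; the substantial work will appear in Proposition \ref{transition}, where invariance under chart moves of transition will require a considerably more delicate analysis of how conjugation of a black vertex by a word $w$ can recombine white vertices near the vertex.
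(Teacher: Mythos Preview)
Your argument has a genuine gap at the last step. The three moves in Figure~\ref{movesA} are introduced in the paper merely as \emph{examples} of chart moves of type~W, not as generators: by definition a type~W move is \emph{any} replacement of the subchart in a disk $D\subset B-\{b_0\}$ by another subchart with the same boundary, subject only to the condition that neither side contains a black vertex. Your proof rests entirely on the assertion that every such replacement factors through (a), (b), (c), and you attribute this to \cite{Kamada2007}; but that reference sets up the general chart formalism and proves Theorems~\ref{exist} and~\ref{classif} using type~W moves in the general sense --- it does not, as far as I can see, prove that the three local moves generate all of them. Unwound, your claim says that every $\mathcal{C}_0$--chart in $S^2$ with no black vertices can be reduced to the empty chart by (a), (b), (c); this is a statement about spherical van~Kampen diagrams over the presentation $(\mathcal{X}_0,\mathcal{R}_0)$, essentially that there are no nontrivial identities among the relations. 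For an arbitrary presentation this is false, and for $\mathcal{C}_0$ it would in particular force the \emph{signed} count $m_4(\Gamma)$ to vanish for every such $\Gamma$ (since (a), (b), (c) all preserve $m_4$), which is much stronger than the parity statement you are trying to prove and certainly requires an argument.

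The paper's proof avoids this issue entirely and supplies the missing group theory. It first observes that invariance under type~W moves is equivalent to $w(\Gamma)=0$ for every $\mathcal{C}_0$--chart $\Gamma$ in $S^2$ without black vertices. To prove this it uses Lemma~\ref{fill}, which exploits the fact that the Dirac braid is \emph{central} in $B_{2g+2}(S^2)$ to build, for each $k$, a filling of the commutator box $[\xi_k,(\xi_1\cdots\xi_{2g+1})^{2g+2}]$ using only $r_1,r_2,r_3$--type vertices. This lets one slide every $r_4^{\pm 1}$--vertex outside a region containing the rest of the chart, cancel $r_4/r_4^{-1}$ pairs with move~(c), and then --- since no $r_4$--vertices remain in the core --- lift the core to a $\tilde{\mathcal{C}}$--chart over $B_{2g+2}(S^2)$. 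The boundary word contributed by the $n$ surviving $r_4^\varepsilon$--vertices is $(x_1\cdots x_{2g+1})^{\varepsilon(2g+2)n}$, which must be trivial in $B_{2g+2}(S^2)$; since the Dirac braid has order exactly~$2$ there, $n$ is even. That order-$2$ fact is the actual reason the invariant lives in $\mathbb{Z}_2$, and your approach never touches it.
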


We need a lemma. 

\begin{lem}\label{fill}
For any $k=1,\ldots ,2g+1$ and 
a $\mathcal{C}_0$--chart depicted in Figure \ref{center}, 
there exists a filling of the box labeled $T_k$ which includes neither black vertices 
nor white vertices of types $r_4^{\pm 1}$. 
(Note that the sequence of edges labeled $1,\ldots ,2g+1$ in Figure \ref{center} 
appears $2g+2$ times on each side of the box.) 

\begin{figure}[ht!]
\labellist
\footnotesize \hair 2pt
\pinlabel $2g+1$ [r] at 0 128
\pinlabel $1$ [r] at 0 108
\pinlabel $2g+1$ [r] at 0 72
\pinlabel $1$ [r] at 0 54
\pinlabel $2g+1$ [r] at 0 43
\pinlabel $1$ [r] at 0 24
\pinlabel $k$ [l] at 48 150
\pinlabel $k$ [l] at 48 5
\pinlabel {\small $T_k$} [l] at 38 77
\pinlabel $2g+1$ [l] at 93 128
\pinlabel $1$ [l] at 93 108
\pinlabel $2g+1$ [l] at 93 72
\pinlabel $1$ [l] at 93 54
\pinlabel $2g+1$ [l] at 93 43
\pinlabel $1$ [l] at 93 24
\endlabellist
\centering
\includegraphics[scale=0.75]{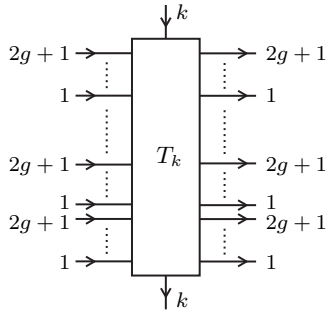}
\caption{Subchart without white vertices of types $r_4^{\pm 1}$}
\label{center}
\end{figure}

\end{lem}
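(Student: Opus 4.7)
The plan is to reduce the construction of the required filling to a central algebraic fact about the spherical braid group $B_{2g+2}(S^2)$. First I would read off the boundary word of the box $T_k$ in Figure~\ref{center}. Each vertical side contributes the word $(\xi_1 \xi_2 \cdots \xi_{2g+1})^{2g+2} = r_4$, while the top and bottom each contribute a single $\xi_k$ edge. A careful check of orientations shows that, up to cyclic permutation, the boundary word equals the commutator
\[
[\xi_k,\, r_4] \;=\; \xi_k\cdot r_4 \cdot \xi_k^{-1}\cdot r_4^{-1}.
\]
In particular, this word is trivial in $\mathcal{M}_{0,2g+2}$ (because $r_4=1$ there), which is the minimum requirement for any filling to exist.

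Next, by the standard chart-theoretic correspondence of Kamada~\cite{Kamada2007}, producing a filling of $T_k$ with no black vertices and whose white vertices are only of types $r_1^{\pm 1}$, $r_2^{\pm 1}$, $r_3^{\pm 1}$ is equivalent to exhibiting $[\xi_k,\, r_4]$ as an element of the normal closure $N$ of $\{r_1(i,j),\, r_2(i),\, r_3\}$ in the free group $F$ on $\xi_1,\ldots,\xi_{2g+1}$. Indeed, each factor $w\, r^{\pm 1} w^{-1}$ in such a product of conjugates can be realized as a ``lollipop'' inside $T_k$ (a white vertex of type $r^{\pm 1}$ connected to $\partial T_k$ by an arc whose intersection word with the rest of the chart is $w$), and these lollipops concatenate to realize the full product.

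The algebraic input is now supplied by the Fadell--Van Buskirk presentation: under the identification $\xi_i \leftrightarrow x_i$, the relators $r_1$, $r_2$, $r_3$ are exactly the defining relators $\tilde{r}_1$, $\tilde{r}_2$, $\tilde{r}_3$ of $B_{2g+2}(S^2)$, so $N$ is precisely the kernel of the surjection $F \twoheadrightarrow B_{2g+2}(S^2)$. By Gillette--Van Buskirk~\cite{GV1968}, the Dirac braid $\Delta_{2g+2} = (x_1\cdots x_{2g+1})^{2g+2}$ lies in the center of $B_{2g+2}(S^2)$; in particular it commutes with each $x_k$. Hence $[x_k,\Delta_{2g+2}]=1$ in $B_{2g+2}(S^2)$, which lifts to $[\xi_k,\, r_4]\in N$, and converting this algebraic expression back into a chart gives the desired filling of $T_k$.

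The main obstacle I foresee is purely bookkeeping: correctly accounting for the orientations and cyclic order of the edges along $\partial T_k$, so as to identify the boundary word with $[\xi_k,\, r_4]$ (rather than some variant that still evaluates to the identity but requires extra massaging). Once that identification is made, the existence of the filling reduces abstractly to the centrality of the Dirac braid in $B_{2g+2}(S^2)$ together with the standard lollipop realization of relator expressions by charts, so no explicit conjugate product needs to be written down.
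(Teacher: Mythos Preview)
Your proposal is correct and follows essentially the same approach as the paper: both arguments identify the boundary word of $T_k$ with the commutator $[\xi_k,r_4]$, invoke the centrality of the Dirac braid in $B_{2g+2}(S^2)$ (so that this commutator lies in the normal closure of $\tilde r_1,\tilde r_2,\tilde r_3$ in the free group), and then translate the resulting product of conjugates of relators into a chart filling using only white vertices of types $r_1^{\pm1},r_2^{\pm1},r_3^{\pm1}$. The paper phrases the last step as a sequence of word transformations realized vertex-by-vertex in a $\tilde{\mathcal C}$-chart and then relabels $x_i\mapsto\xi_i$, whereas you phrase it via lollipops and the kernel of $F\twoheadrightarrow B_{2g+2}(S^2)$; these are the same construction.
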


\begin{proof}
Since the element of $B_{2g+2}(S^2)$ represented by 
$(x_1x_2\cdots x_{2g+1})^{2g+2}$ is included in the center of $B_{2g+2}(S^2)$, 
it commutes with each of $x_1,\ldots ,x_{2g+1}$ in $B_{2g+2}(S^2)$. 
Thus the word $[x_k,(x_1x_2\cdots x_{2g+1})^{2g+2}]$ represents the identity element 
of $B_{2g+2}(S^2)$, 
and there exists a finite sequence of words in $\tilde{\mathcal{X}}\cup\tilde{\mathcal{X}}^{-1}$ 
starting from the word $[x_k,(x_1x_2\cdots x_{2g+1})^{2g+2}]$ to the empty word 
such that each word is related to the previous one by one of the following 
transformations: 
(i) insertion or deletion of a trivial relator $x^{\varepsilon}x^{-\varepsilon}$ 
for $x\in\tilde{\mathcal{X}}$ and $\varepsilon\in\{+1,-1\}$; 
(ii) insertion of $\tilde{r}^{\varepsilon}$ for 
$\tilde{r}\in\tilde{\mathcal{R}}$ and $\varepsilon\in\{+1,-1\}$. 
We first consider a $\tilde{\mathcal{C}}$--chart depicted in Figure \ref{center}. 
For each transformation (i) (resp. (ii)), we create an edge labeled $x$ 
(resp. a vertex of type $\tilde{r}^{\varepsilon}$). 
Repeating such creations, 
we can fill the box labeled $T_k$ with edges and white vertices of types 
$\tilde{r}_1(i,j)^{\pm 1}$, $\tilde{r}_2(i)^{\pm 1}$, and $\tilde{r}_3^{\pm 1}$. 
(A similar argument for charts without white vertices of type $\tilde{r}_3^{\pm 1}$ 
can be found in \cite[Lemma 15]{Kamada1992} and \cite[\S 18.2]{Kamada2002}.) 
Changing all labels $x_i$ of edges into $\xi_i$, we obtain a $\mathcal{C}_0$--chart 
which consists of edges and white vertices of types 
$r_1(i,j)^{\pm 1}$, $r_2(i)^{\pm 1}$, and $r_3^{\pm 1}$. 
\end{proof}

\begin{proof}[of Proposition \ref{typew}] 
It suffices to show that $w(\Gamma)=0$ for every $\mathcal{C}_0$--chart $\Gamma$ 
in $S^2$ without black vertices. 
Let $\Gamma$ be such a $\mathcal{C}_0$--chart in $S^2$. 
We consider a chart move of type W depicted in Figure \ref{movesD}. 
Suppose that the box labeled $T_k$ is filled with edges and white vertices of 
types $r_1(i,j)^{\pm 1}$, $r_2(i)^{\pm 1}$, and $r_3^{\pm 1}$ by Lemma \ref{fill}. 

\begin{figure}[ht!]
\labellist
\footnotesize \hair 2pt
\pinlabel $2g+1$ [r] at 0 138
\pinlabel $1$ [r] at 0 119
\pinlabel $2g+1$ [r] at 0 83
\pinlabel $1$ [r] at 0 64
\pinlabel $2g+1$ [r] at 0 54
\pinlabel $1$ [r] at 0 35
\pinlabel $r_4$ [r] at 46 88
\pinlabel $k$ [l] at 76 159
\pinlabel $2g+1$ [r] at 224 138
\pinlabel $1$ [r] at 224 119
\pinlabel $2g+1$ [r] at 224 83
\pinlabel $1$ [r] at 224 64
\pinlabel $2g+1$ [r] at 224 54
\pinlabel $1$ [r] at 224 35
\pinlabel $r_4$ [r] at 332 88
\pinlabel $k$ [l] at 273 159
\pinlabel $k$ [l] at 273 13
\pinlabel {\small $T_k$} [r] at 279 88
\endlabellist
\centering
\includegraphics[scale=0.75]{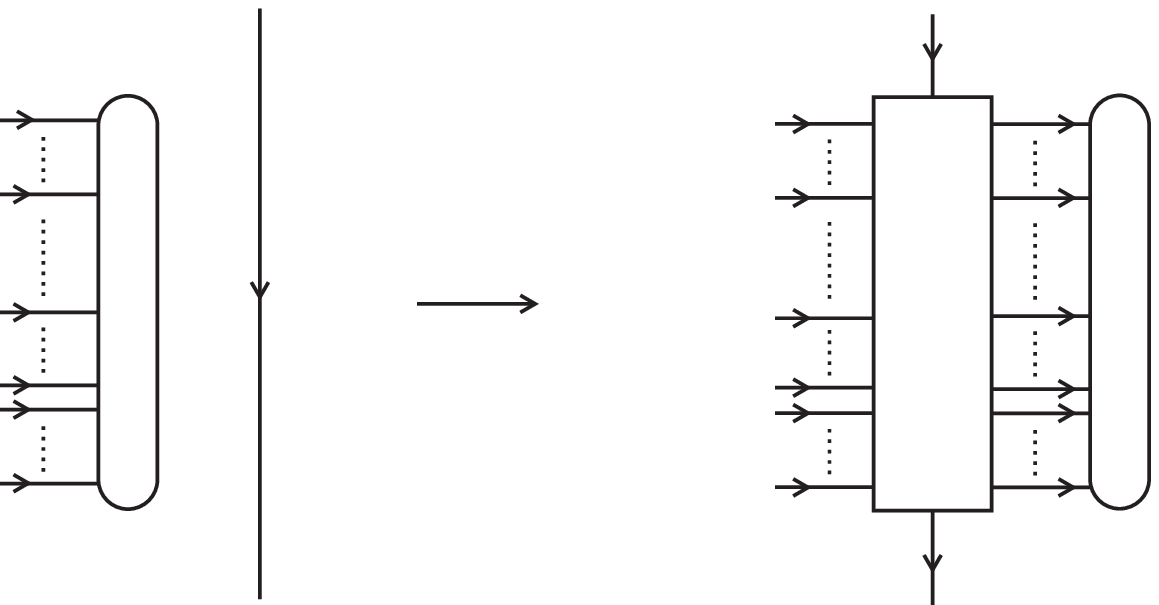}
\caption{Passing through an edge}
\label{movesD}
\end{figure}

Using the sequence of chart moves repeatedly, we can make 
a $\mathcal{C}_0$--chart $\Gamma_1$ 
depicted in Figure \ref{chartA} (a) from $\Gamma$, 
where the box labeled $\Theta_1$ is filled with edges and white vertices of 
types $r_1(i,j)^{\pm 1}$, $r_2(i)^{\pm 1}$, and $r_3^{\pm 1}$. 
The number of white vertices of type $r_4$ and that of white vertices of type $r_4^{-1}$ 
included in $\Gamma_1$ are the same as those for $\Gamma$ respectively. 

\begin{figure}[ht!]
\labellist
\footnotesize \hair 2pt
\pinlabel {\small $\Theta_1$} [r] at 83 53
\pinlabel $r_4^{\pm 1}$ [b] at 30 0
\pinlabel $r_4^{\pm 1}$ [b] at 114 0
\pinlabel (a) [t] at 72 0
\pinlabel {\small $\Theta_2$} [r] at 240 53
\pinlabel $r_4^{\varepsilon}$ [b] at 188 0
\pinlabel $r_4^{\varepsilon}$ [b] at 275 0
\pinlabel (b) [t] at 230 0
\pinlabel {\small $\Theta_2$} [r] at 400 53
\pinlabel $w$ [b] at 470 26
\pinlabel (c) [t] at 390 0
\endlabellist
\centering
\includegraphics[scale=0.75]{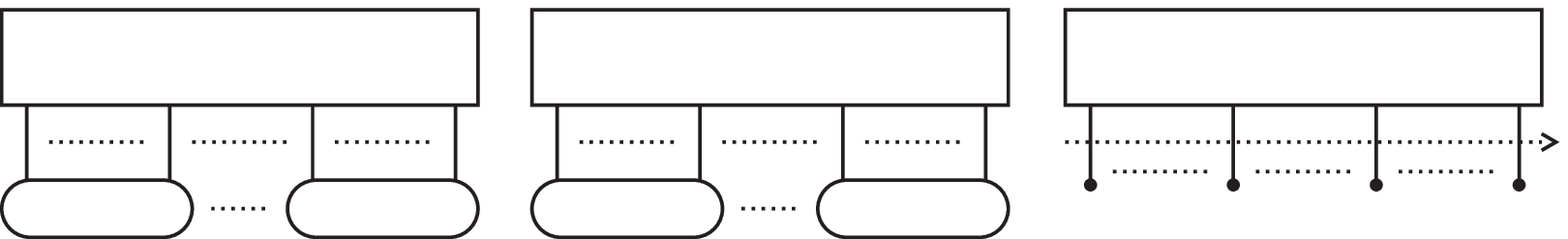}
\vspace{3mm}
\caption{Charts $\Gamma_1$, $\Gamma_2$, and $\Gamma_3$}
\label{chartA}
\end{figure}

We then apply deaths of pairs of white vertices of types $r_4$ and $r_4^{-1}$ as in 
Figure \ref{movesA} (c) and sequences of chart moves of type W as in Figure \ref{movesD} 
to $\Gamma_1$ repeatedly 
to obtain a $\mathcal{C}_0$--chart $\Gamma_2$ depicted in Figure \ref{chartA} (b), 
where the box labeled $\Theta_2$ is filled with edges and white vertices of 
types $r_1(i,j)^{\pm 1}$, $r_2(i)^{\pm 1}$, and $r_3^{\pm 1}$, 
and $\varepsilon$ is equal to either $+1$ or $-1$. 
Let $n$ be the number of white vertices of type $r_4^{\varepsilon}$ included in $\Gamma_2$. 
We replace all the white vertices of type $r_4^{\varepsilon}$ included in $\Gamma_2$ 
with black vertices of types $\ell_0(i)^{-\varepsilon}$ to obtain 
a $\mathcal{C}_0$--chart $\Gamma_3$ depicted in Figure \ref{chartA} (c). 
The intersection word $w$ of the dashed arrow with respect to $\Gamma_3$ 
in Figure \ref{chartA} (c) is equal to 
$(\xi_1\xi_2\cdots\xi_{2g+1})^{\varepsilon (2g+2)n}$. 
Changing all labels $\xi_i$ of edges of $\Gamma_3$ into $x_i$, 
we obtain a $\tilde{\mathcal{C}}$--chart $\tilde{\Gamma}_3$ 
because $\Gamma_3$ does not contain white vertices of types $r_4^{\pm 1}$. 
The intersection word $w$ of the dashed arrow with respect to $\tilde{\Gamma}_3$ 
in Figure \ref{chartA} (c) is equal to 
$(x_1x_2\cdots x_{2g+1})^{\varepsilon (2g+2)n}$, which represents the identity 
element of $B_{2g+2}(S^2)$. 
Since the element of $B_{2g+2}(S^2)$ represented by 
$(x_1x_2\cdots x_{2g+1})^{2g+2}$ is of order $2$ (see \cite[\S 9.1.4]{FM2011}), 
$n$ must be even. Therefore we have 
\[
w(\Gamma)=w(\Gamma_1)=w(\Gamma_2)=w(\Gamma_3)=0
\]
and this completes the proof. 
\end{proof}

\begin{prop}\label{transition}
Assume that $g$ is odd. 
Then the value of $w$ is invariant under chart moves of transition. 
\end{prop}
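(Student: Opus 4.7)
The plan is to lift the analysis to the sphere braid group $B_{2g+2}(S^2)$ and extract from any chart move of transition a single conjugation equation whose ``Dirac parity'' encodes the change in $w$. A transition move as in Figure \ref{movesB} locally replaces a black vertex of type $s\in\mathcal{S}_0$ by a black vertex of type $s'$ together with a box $T$ filled by edges and white vertices; the filling of $T$ records a derivation in the free group on $\mathcal{X}_0$ of the relation $s' = wsw^{-1}$ using conjugates of $r_1,r_2,r_3,r_4$. Since the region being replaced originally contains no white vertex of type $r_4^{\pm 1}$, the change in $w$ under the move equals, modulo $2$, the total number of type-$r_4^{\pm 1}$ white vertices inside $T$.

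Following the idea used in the proof of Proposition \ref{typew}, I would map the derivation into $B_{2g+2}(S^2)$ via $\xi_i\mapsto x_i$. Under this lift the relators $r_1,r_2,r_3$ become honest relations of $B_{2g+2}(S^2)$, while each $r_4^{\pm 1}$ becomes $\Delta_{2g+2}^{\pm 1}$, a nontrivial central element of order~$2$. Writing $\tilde{s},\tilde{s}',\tilde{w}$ for the obvious lifts to words in $\tilde{\mathcal{X}}$, the derivation encoded by $T$ specializes to
\begin{equation*}
\tilde{w}\,\tilde{s}\,\tilde{w}^{-1} \;=\; \Delta_{2g+2}^{\,\nu}\,\tilde{s}' \quad\text{in}\ B_{2g+2}(S^2),
\end{equation*}
where $\nu\in\{0,1\}$ is the mod-$2$ count of $r_4^{\pm 1}$ vertices in $T$. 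Thus the invariance of $w$ reduces to proving $\nu = 0$, i.e.\ to showing that whenever $s,s'\in\mathcal{S}_0$ and a word $w$ satisfy $s'=wsw^{-1}$ in $\mathcal{M}_{0,2g+2}$, the lifted identity $\tilde{w}\,\tilde{s}\,\tilde{w}^{-1} = \tilde{s}'$ already holds in $B_{2g+2}(S^2)$.

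I would attack this reduction by case analysis on the type of $s$. For $s=\ell_0(i)^{\pm 1}=\xi_i^{\pm 1}$, I would use that a conjugate of $x_i$ in $B_{2g+2}(S^2)$ is geometrically a half-twist along an embedded arc on the punctured sphere and is determined by the isotopy class of that arc; when this conjugate matches another standard generator $\xi_j^{\pm 1}$ in $\mathcal{M}_{0,2g+2}$ it must equal $x_j^{\pm 1}$ and not $\Delta_{2g+2}\,x_j^{\pm 1}$, yielding $\nu=0$. For $s=\ell_h^{\pm 1}$, the analogous claim for the full-twist element $\tilde{\ell}_h=(x_1\cdots x_{2h})^{4h+2}$ is the crux, and I would establish it by exploiting the relationship between this twist and the full twist on the complementary $2(g-h)+1$ strands, whose product with $\tilde{\ell}_h$ differs from $\Delta_{2g+2}$ by a factor whose parity is controlled by the parity of $g+1$; the hypothesis that $g$ is odd is precisely what makes the relevant parity vanish.

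The main obstacle is this last step: the parity argument for the $\ell_h$ case is where the odd-$g$ hypothesis is genuinely indispensable, and for even $g$ one can in fact exhibit chart moves of transition that change $w$. Carrying it out requires a careful tracking of how conjugation by $\tilde{w}$ interacts with the complementary-twist decomposition of the Dirac braid in $B_{2g+2}(S^2)$, and in the presence of \emph{exotic} conjugators $\tilde{w}$ whose image in $\mathcal{M}_{0,2g+2}$ lies in the centralizer of the vanishing cycle being twisted, the parity bookkeeping is the most delicate part of the proof.
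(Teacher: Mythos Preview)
Your reduction to the sphere braid group is clean and valid: the parity $\nu$ of $r_4^{\pm 1}$-vertices in any filling of the box $T$ is indeed determined by the single equation $\tilde w\,\tilde s\,\tilde w^{-1}=\Delta_{2g+2}^{\nu}\,\tilde s'$ in $B_{2g+2}(S^2)$, so the problem becomes showing this conjugation identity lifts from $\mathcal{M}_{0,2g+2}$. This is a genuinely different packaging from the paper's proof, which instead \emph{constructs} explicit fillings with even $r_4$-count via a ``mirror trick'': it first replaces the conjugator $w$ by a geometrically convenient word $w'$, sandwiches the core between two boxes $\Theta$ and $\Theta^*$ (the second being the mirror of the first, so their $r_4$-counts cancel), and fills the core using only $r_1,r_2$ (and $r_3$) vertices. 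Your approach is more conceptual; the paper's is more hands-on.

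For the $\ell_0$ case your arc argument is essentially right, though it rests on the fact that $\Delta_{2g+2}x_j$ is never conjugate to any $x_i$ in $B_{2g+2}(S^2)$ (equivalently, that half-twists in the sphere braid group are determined by their underlying arcs). This is true but deserves a line of justification; it does not follow from abelianization alone, since $\Delta_{2g+2}$ dies there.

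The genuine gap is in the $\ell_h$ case. Your proposed mechanism---a ``complementary-twist decomposition'' whose parity is governed by $g+1$---is not the right lever, and as stated it is too vague to complete. What actually makes the odd-$g$ hypothesis bite is the structure of the \emph{centralizer} of $\tau_h$ in $\mathcal{M}_{0,2g+2}$: when $g$ is odd the separating curve $b_h$ bounds disks with $2h+1\neq 2(g-h)+1$ marked points, so any $\varphi$ commuting with $\tau_h$ must preserve each side and hence factors through $\langle\delta_1,\dots,\delta_{2h}\rangle\times\langle\delta_{2h+2},\dots,\delta_{2g+1}\rangle$. Each of these generators lifts to an $x_i$ that already centralizes $\tilde\ell_h$ in $B_{2g+2}(S^2)$ (since $\tilde\ell_h$ is central in $\langle x_1,\dots,x_{2h}\rangle$ and commutes with $x_j$ for $j\geq 2h+2$), whence $\nu=0$. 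For even $g$ and $h=g/2$ there exist side-swapping conjugators, and it is exactly these that can produce $\nu=1$; your parity heuristic does not isolate this phenomenon. The paper's Lemma~\ref{t2} carries out precisely this centralizer argument (phrased as producing the word $w'$), and that is what you need to import to close the gap.
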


We divide chart moves of transition for $\mathcal{C}_0$--charts into two classes. 

\begin{defn} 
A chart move of transition as in Figure \ref{movesB} is called 
an ${\rm L}_0$--{\it move} (resp. ${\rm L}_+$--{\it move}) if $\mathcal{C}=\mathcal{C}_0$, 
$w\in\mathcal{X}_0\cup\mathcal{X}_0^{-1}$, and 
$s,s'\in\{\ell_0(i)^{\pm 1}\, |\, i=1,\ldots ,2g+1\}$ 
(resp. $s,s'\in\{\ell_h^{\pm 1}\, |\, h=1,\ldots ,[g/2]\}$). 
If $s=\xi_k^{\varepsilon}$, $s'=\xi_{\ell}^{\varepsilon}$ 
(resp. $s=s'=(\xi_1\xi_2\cdots\xi_{2h})^{\varepsilon(4h+2)}$), 
and $\varepsilon\in\{+1,-1\}$, 
then the label $T$ on the box in Figure \ref{movesB} is also denoted by 
$T_1(k,\ell; \varepsilon)$ (resp. $T_2(h;\varepsilon)$). 
Every chart move of transition for $\mathcal{C}_0$--charts is either an ${\rm L}_0$--move or 
an ${\rm L}_+$--move. 
\end{defn}

We first show a lemma for ${\rm L}_0$--moves. 

\begin{lem}\label{t1} 
For any $k,\ell=1,\ldots ,2g+1$ and $\varepsilon\in\{+1,-1\}$, 
there exists a filling of 
the box labeled $T=T_1(k,\ell;\varepsilon)$ in Figure \ref{movesB} 
without black vertices such that the number of white vertices of type $r_4^{\pm 1}$ is even, 
i.e., the filling consists of edges, white vertices of 
types $r_1(i,j)^{\pm 1}$, $r_2(i)^{\pm 1}$, $r_3^{\pm 1}$, 
and an even number of white vertices of types $r_4^{\pm 1}$. 
\end{lem}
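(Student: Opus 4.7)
The plan is to argue that once $w$ is restricted to a single letter $\xi_m^{\delta}$, as required by the definition of an ${\rm L}_0$-move, the conjugation equation $\xi_m^{\delta}\xi_k^{\varepsilon}\xi_m^{-\delta}=\xi_\ell^{\varepsilon}$ in $\mathcal{M}_{0,2g+2}$ becomes so restrictive that the box $T_1(k,\ell;\varepsilon)$ can always be filled using only commutator relators, so that no Dirac braid relator $r_4$ is ever needed.

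First I would project the required equality through the natural surjection $\mathcal{M}_{0,2g+2}\to S_{2g+2}$ that sends each $\xi_i$ to the adjacent transposition $(i,i+1)$. This yields
\[
(m,m+1)(k,k+1)(m,m+1)=(\ell,\ell+1)
\]
in $S_{2g+2}$. A direct case analysis on $|m-k|$ shows that the left-hand side equals $(k,k+1)$ when $m=k$ or $|m-k|\geq 2$, forcing $\ell=k$; while if $m=k\pm 1$, the left-hand side becomes the non-adjacent transposition $(k,k+2)$ or $(k-1,k+1)$, so no generator $\xi_\ell$ is its preimage. In the latter situation no ${\rm L}_0$-move of type $T_1(k,\ell;\varepsilon)$ exists at all and the statement of the lemma is vacuous. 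We may therefore assume $\ell=k$ with $m=k$ or $|m-k|\geq 2$.

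In both remaining cases $\xi_m^{\delta}$ and $\xi_k^{\varepsilon}$ commute in $\mathcal{M}_{0,2g+2}$: the relation is tautological when $m=k$, and is the single relator $r_1(\min(m,k),\max(m,k))$ when $|m-k|\geq 2$ (note $\max-\min\geq 2$, so this relator is indeed one of those listed in $\mathcal{R}_0$). Hence the boundary word of the box, which is $\xi_m^{\delta}\xi_k^{\varepsilon}\xi_m^{-\delta}\xi_k^{-\varepsilon}$, is realized either by the empty chart (when $m=k$: the four boundary edges, all labelled $\xi_k$, are simply paired up inside the box by two disjoint oriented arcs) or by a single white vertex of type $r_1(\min(m,k),\max(m,k))^{\pm 1}$ placed at the centre of the box and connected to the four boundary edges in the unique compatible way (when $|m-k|\geq 2$). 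Either filling contains no black vertex and exactly zero white vertices of type $r_4^{\pm 1}$, which is an even number.

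The key point of the argument is the purely combinatorial observation that a single-letter conjugation cannot carry one generator of $\mathcal{M}_{0,2g+2}$ to a distinct generator, so no Dirac braid relator ever intervenes in the ${\rm L}_0$-case and there is in fact no substantive obstacle here. The genuine difficulty in proving Proposition \ref{transition} should therefore be expected to lie entirely in the companion ${\rm L}_+$-lemma, which is presumably where the hypothesis that $g$ is odd plays its essential role.
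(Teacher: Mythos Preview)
Your argument rests on reading the phrase ``$w\in\mathcal{X}_0\cup\mathcal{X}_0^{-1}$'' in the definition of an $L_0$--move as restricting $w$ to a single letter. Unfortunately that reading does not match how the lemma is actually stated, proved, or used. The paper's own proof treats the element $\varphi$ represented by $w$ as a \emph{general} element of $\mathcal{M}_{0,2g+2}$, and it devotes an entire Case~2 to $k\neq\ell$ --- which, as you correctly observe, would be vacuous if $w$ were a single letter. Concretely, the word $w=\xi_k\xi_{k+1}$ already gives $w\xi_kw^{-1}=\xi_{k+1}$ by the braid relation, so $L_0$--moves with $\ell=k+1$ certainly exist; and the sentence ``every chart move of transition for $\mathcal{C}_0$--charts is either an $L_0$--move or an $L_+$--move'' would simply be false under the single-letter reading, since a move of transition with a longer $w$ cannot in general be factored through intermediate conjugates lying in $\mathcal{S}_0$. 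The notation in the definition is loose shorthand for ``a word in $\mathcal{X}_0\cup\mathcal{X}_0^{-1}$''.

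With $w$ an arbitrary word, the problem becomes substantially harder: given any $\varphi\in\mathcal{M}_{0,2g+2}$ with $\varphi\delta_k\varphi^{-1}=\delta_\ell$, one must produce a filling of the box whose $r_4^{\pm1}$ count is even. The paper's approach (for $k=\ell$) is to show geometrically that $\varphi$ fixes the arc $a_k$, hence lies in the subgroup generated by half-twists disjoint from $a_k$; this yields an alternative representative word $w'$ for $\varphi$ that commutes letter-by-letter with $\xi_k$ without ever invoking $r_4$. The box is then split into a central piece $T_1(k)$ (filled using only $r_1,r_2$) sandwiched between two mirror-image pieces $\Theta_1(k)$ and $\Theta_1(k)^*$ rewriting $w$ as $w'$; the mirror symmetry forces the total $r_4$ count to be even. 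Your symmetric-group projection and single-vertex fillings are perfectly correct for one-letter $w$, but they cover only the most degenerate case and do not touch the actual content of the lemma. Your closing remark that all the difficulty lies in the $L_+$--case is therefore off the mark: the $L_0$--case already requires the geometric input about stabilizers of arcs.
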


\begin{proof} 
We assume that $\varepsilon=+1$ for simplicity. 
Let $\varphi$ and $\delta_i$ be elements of $\mathcal{M}_{0,2g+2}$ 
represented by $w$ and $\xi_i$, respectively. 
Since the intersection word of the boundary of the box labeled $T=T_1(k,\ell;+1)$ 
with respect to the $\mathcal{C}_0$--chart in Figure \ref{movesB} 
is $wsw^{-1}s'^{-1}=w\xi_kw^{-1}\xi_{\ell}^{-1}$, we have a relation 
$\varphi\delta_k\varphi^{-1}\delta_{\ell}^{-1}=1$ in $\mathcal{M}_{0,2g+2}$. 

Case 1: Suppose that $k=\ell$. 
We consider a hyperelliptic involution $I$ on $\Sigma_g$ and 
think of $S^2$ as the quotient of $\Sigma_g$ by the action of $I$. 
The image of the simple closed curve $c_i$ in Figure \ref{curves1} 
under the double branched covering $\Sigma_g\rightarrow S^2$ 
is a simple arc $a_i$ on $S^2$ depicted in Figure \ref{arcs}, 
and a right-handed half twist $D_i$ about $a_i$ represents the mapping class $\delta_i$. 
We also consider an additional arc $a_{2g+2}$ on $S^2$ as in Figure \ref{arcs}, 
a right-handed half twist $D_{2g+2}$ about $a_{2g+2}$, 
and the mapping class $\delta_{2g+2}$ of $D_{2g+2}$. 
We think of the indices $i$ of $a_i$, $D_i$, and $\delta_i$ as integers modulo $2g+2$. 
For example, $a_{2g+3}=a_1$. 

\begin{figure}[ht!]
\labellist
\small \hair 2pt
\pinlabel $a_1$ [t] at 33 6
\pinlabel $a_2$ [t] at 66 6
\pinlabel $a_i$ [t] at 162 6
\pinlabel $a_{2g+1}$ [t] at 251 6
\pinlabel $a_{2g+2}$ [t] at 141 46
\endlabellist
\centering
\includegraphics[scale=0.75]{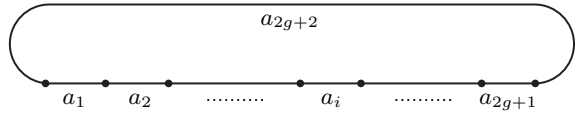}
\vspace{2mm}
\caption{Arcs on $S^2$}
\label{arcs}
\end{figure}

The relation $\varphi\delta_k\varphi^{-1}=\delta_k$ implies that 
$(a_k)F$ is isotopic to $a_k$ by Lemma 4.1 of \cite{KM2000}, 
where $F$ is an orientation preserving diffeomorphism on $S^2$
representing $\varphi$. 
Since $(a_k)D_k=a_k$ and $D_k$ reverses the orientation of $a_k$, 
we can assume that either $F$ or $FD_k$ fixes $a_k$ pointwise. 
Cutting $S^2$ along $a_k$, we obtain an orientation preserving diffeomorphism 
on a $2$--disk which fixes the boundary pointwise. 
The mapping class group $\mathcal{M}_{0,2g}^1$ of the $2$--disk with $2g$ marked points 
is generated by the half twists about arcs corresponding to $a_1,\ldots ,a_{2g+2}$ 
except $a_{k-1}$ and $a_{k+1}$ (see \cite[\S 9.1.3]{FM2011}). 
Hence $\varphi$ can be factorized 
as a product of $\delta_1^{\pm 1},\ldots ,\delta_{2g+2}^{\pm 1}$ except 
$\delta_{k-1}^{\pm 1}$ and $\delta_{k+1}^{\pm 1}$, 
and it can be represented by a word $w'$ in 
$\xi_1^{\pm 1},\ldots ,\xi_{2g+2}^{\pm 1}$ except 
$\xi_{k-1}^{\pm 1}$ and $\xi_{k+1}^{\pm 1}$, 
where we put 
$\xi_{2g+2}:=\xi_1\xi_2\cdots\xi_{2g}\xi_{2g+1}\xi_{2g}^{-1}\cdots\xi_2^{-1}\xi_1^{-1}$. 
It is easily checked that the relation 
$\delta_{2g+2}=\delta_1\delta_2\cdots\delta_{2g}\delta_{2g+1}
\delta_{2g}^{-1}\cdots\delta_2^{-1}\delta_1^{-1}$ holds in $\mathcal{M}_{0,2g+2}$. 

We divide the box labeled $T_1(k,k;+1)$ into three boxes labeled 
$T_1(k)$, $\Theta_1(k)$, and $\Theta_1(k)^*$ as in Figure \ref{chartB}. 

\begin{figure}[ht!]
\labellist
\footnotesize \hair 2pt
\pinlabel $k$ [r] at 0 52
\pinlabel $k$ [l] at 133 52
\pinlabel $w$ [l] at 116 101
\pinlabel $w$ [l] at 116 3
\pinlabel $T_1(k,k;+1)$ [r] at 98 51
\pinlabel $=$ [l] at 160 50
\pinlabel $k$ [r] at 202 52
\pinlabel $k$ [l] at 335 52
\pinlabel $w$ [l] at 318 101
\pinlabel $w$ [l] at 318 3
\pinlabel $w'$ [l] at 325 76
\pinlabel $w'$ [l] at 325 30
\pinlabel $\Theta_1(k)^*$ [r] at 292 85
\pinlabel $\Theta_1(k)$ [r] at 287 16
\pinlabel $T_1(k)$ [r] at 287 51
\endlabellist
\centering
\includegraphics[scale=0.8]{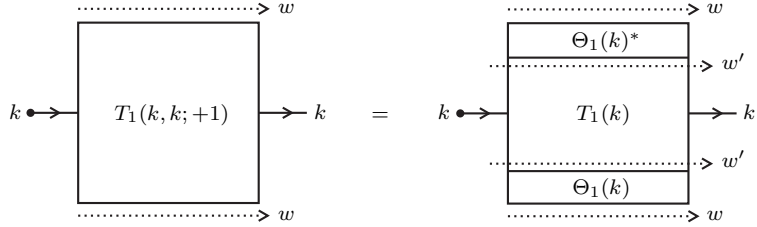}
\caption{Division of a box}
\label{chartB}
\end{figure}

The box labeled $\Theta_1(k)$ can be filled with edges and white vertices 
because both $w$ and $w'$ represent the same mapping class $\varphi$. 
The box labeled $\Theta_1(k)^*$ can be filled with the mirror image of the subchart 
filling the box labeled $\Theta_1(k)$ with edges orientation reversed. 
Since $w'$ is a word in $\xi_1^{\pm 1},\ldots ,\xi_{2g+1}^{\pm 1}, 
\xi_{2g+2}^{\pm 1} (=
\xi_1\xi_2\cdots\xi_{2g}\xi_{2g+1}\xi_{2g}^{-1}\cdots\xi_2^{-1}\xi_1^{-1})$ except 
$\xi_{k-1}^{\pm 1}$ and $\xi_{k+1}^{\pm 1}$, 
the box labeled $T_1(k)$ can be filled with copies of three kinds of 
subcharts depicted in Figure \ref{chartC}. 
Note that the subcharts depicted in Figure \ref{chartC} correspond to $\xi_k$, 
$\xi_i\, (i\ne k-1,k,k+1,2g+2)$, and $\xi_{2g+2}$, respectively. 
We also need subcharts corresponding to $\xi_k^{-1}$, 
$\xi_i^{-1}\, (i\ne k-1,k,k+1,2g+2)$, and $\xi_{2g+2}^{-1}$. 

\begin{figure}[ht!]
\labellist
\footnotesize \hair 2pt
\pinlabel $k$ [r] at 0 43
\pinlabel $k$ [l] at 36 43
\pinlabel $k$ [b] at 18 86
\pinlabel $k$ [t] at 18 0
\pinlabel $k$ [r] at 67 43
\pinlabel $k$ [l] at 102 43
\pinlabel $i$ [b] at 84 86
\pinlabel $i$ [t] at 84 0
\pinlabel $k$ [r] at 136 43
\pinlabel $k$ [l] at 342 43
\pinlabel $1$ [b] at 151 86
\pinlabel $2$ [b] at 166 86
\pinlabel $k$--$1$ [b] at 186 86
\pinlabel $k$ [b] at 200 86
\pinlabel $2g$ [b] at 225 86
\pinlabel $2g$ [b] at 253 86
\pinlabel $k$ [b] at 276 86
\pinlabel $k$--$1$ [b] at 289 86
\pinlabel $2$ [b] at 314 86
\pinlabel $1$ [b] at 328 86
\pinlabel $1$ [t] at 151 0
\pinlabel $2$ [t] at 166 0
\pinlabel $k$--$1$ [t] at 186 0
\pinlabel $k$ [t] at 200 0
\pinlabel $2g$+$1$ [t] at 240 0
\pinlabel $k$ [t] at 276 0
\pinlabel $k$--$1$ [t] at 289 0
\pinlabel $2$ [t] at 314 0
\pinlabel $1$ [t] at 328 0
\pinlabel $k$--$1$ [b] at 212 47
\pinlabel $k$--$1$ [b] at 264 47
\endlabellist
\centering
\includegraphics[scale=0.8]{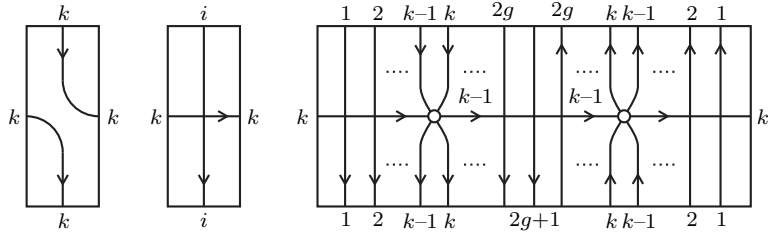}
\vspace{3mm}
\caption{Three kinds of subcharts}
\label{chartC}
\end{figure}

The box labeled $T_1(k)$ is filled with edges and 
white vertices of types $r_1(i,j)^{\pm 1}$ and $r_2(i)^{\pm 1}$. 
The number of white vertices of type $r_4^{\pm 1}$ included in the box labeled $\Theta_1(k)^*$ 
is equal to that for the box labeled $\Theta_1(k)$. 
Therefore the box labeled $T_1(k,k;+1)$ is filled with edges, white vertices of 
types $r_1(i,j)^{\pm 1}$, $r_2(i)^{\pm 1}$, $r_3^{\pm 1}$, 
and an even number of white vertices of types $r_4^{\pm 1}$. 

Case 2: Suppose that $k\ne\ell$. 
We assume that $k<\ell$ for simplicity. 
We put $v:=(\xi_{k+1}\xi_k)(\xi_{k+2}\xi_{k+1})\cdots (\xi_{\ell}\xi_{\ell-1})$. 
Since $v^{-1}\xi_kv$ and $\xi_{\ell}$ represent the same element of $\mathcal{M}_{0,2g+2}$, 
and $w\xi_kw^{-1}\xi_{\ell}^{-1}$ represents the identity element of $\mathcal{M}_{0,2g+2}$, 
$vw\xi_k(vw)^{-1}\xi_k^{-1}$ also represents the identity element of $\mathcal{M}_{0,2g+2}$. 
We fill the box labeled $T_1(k,\ell;+1)$ 
with edges, white vertices of type $r_2(i)^{\pm 1}$, 
and a small box labeled $T_1(k,k;+1)'$ as in Figure \ref{chartD}. 

\begin{figure}[ht!]
\labellist
\footnotesize \hair 2pt
\pinlabel $\ell$ [r] at 0 89
\pinlabel $k$ [l] at 243 89
\pinlabel $k$ [b] at 83 92
\pinlabel $v$ [b] at 158 111
\pinlabel $w$ [b] at 212 111
\pinlabel $v$ [t] at 158 66
\pinlabel $w$ [t] at 212 66
\pinlabel $T_1(k,k;+1)'$ [r] at 188 88
\endlabellist
\centering
\includegraphics[scale=0.8]{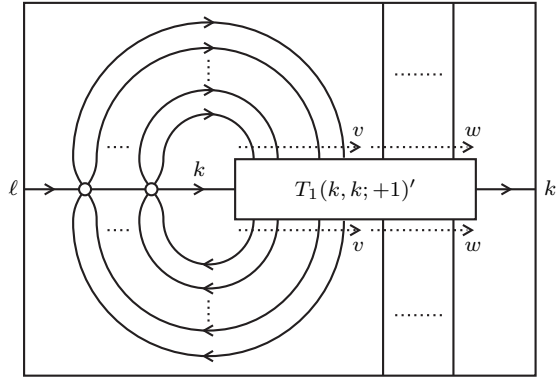}
\caption{Inside of the box labeled $T_1(k,\ell;+1)$}
\label{chartD}
\end{figure}

By the proof of Case 1, 
the small box labeled $T_1(k,k;+1)'$ is filled with edges, white vertices of 
types $r_1(i,j)^{\pm 1}$, $r_2(i)^{\pm 1}$, $r_3^{\pm 1}$, 
and an even number of white vertices of types $r_4^{\pm 1}$, 
so is the box labeled $T_1(k,\ell;+1)$. 
\end{proof}

We next show a lemma for ${\rm L}_+$--moves. 

\begin{lem}\label{t2} 
Assume that $g$ is odd. 
For any $h=1,\ldots ,[g/2]$ and $\varepsilon\in\{+1,-1\}$, 
there exists a filling of 
the box labeled $T=T_2(h;\varepsilon)$ in Figure \ref{movesB} 
without black vertices such that the number of white vertices of type $r_4^{\pm 1}$ is even, 
i.e., the filling consists of edges, white vertices of 
types $r_1(i,j)^{\pm 1}$, $r_2(i)^{\pm 1}$, $r_3^{\pm 1}$, 
and an even number of white vertices of types $r_4^{\pm 1}$. 
\end{lem}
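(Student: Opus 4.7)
The plan is to follow the template of the proof of Lemma \ref{t1} combined with the word-reduction technique of Lemma \ref{fill}: interpret $\ell_h$ as a central element in a disc-braid subgroup of $B_{2g+2}(S^2)$, and use that centrality to lift the required commutation from $\mathcal{M}_{0,2g+2}$ up to $B_{2g+2}(S^2)$. Writing $w=\xi_j^{\pm 1}$ and $\tilde\ell_h:=(x_1x_2\cdots x_{2h})^{4h+2}\in B_{2g+2}(S^2)$, I would use that the element $(x_1\cdots x_{2h})^{2h+1}$ is the full twist on $2h+1$ strands and therefore lies in the center of the natural image of $B_{2h+1}=\langle x_1,\dots,x_{2h}\rangle$ inside $B_{2g+2}(S^2)$; hence so does its square $\tilde\ell_h$. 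The goal is to verify that $x_j\tilde\ell_h^{\,\varepsilon}=\tilde\ell_h^{\,\varepsilon}x_j$ holds already in $B_{2g+2}(S^2)$, not merely modulo the Dirac braid.

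I would dispose of the three subcases in $j$ as follows. If $j\le 2h$ then $x_j\in\langle x_1,\dots,x_{2h}\rangle$, and centrality of $\tilde\ell_h$ in this subgroup gives the commutation immediately. If $j\ge 2h+2$ then $|i-j|\ge 2$ for every $i\le 2h$, so $x_j$ commutes with each factor of $\tilde\ell_h$ by $\tilde r_1(i,j)$. When $j=2h+1$ the half-twist $\xi_{2h+1}$ moves the separating curve enclosing the first $2h+1$ marked points off itself, so $\xi_{2h+1}\ell_h\xi_{2h+1}^{-1}\ne\ell_h$ already in $\mathcal{M}_{0,2g+2}$; hence this case is not a legitimate chart move of transition and needs no filling.

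Once braid-group commutation is established, I would mimic the proof of Lemma \ref{fill}: the word $[x_j^{\pm 1},\tilde\ell_h^{\,\varepsilon}]$ admits a finite sequence of insertions and deletions of trivial relators $x_i^{\pm 1}x_i^{\mp 1}$ and of relators $\tilde r_1(i,j)^{\pm 1}$, $\tilde r_2(i)^{\pm 1}$, $\tilde r_3^{\pm 1}$, reducing it to the empty word. This sequence fills the box $T_2(h;\varepsilon)$ with a $\tilde{\mathcal{C}}$-subchart whose white vertices have types only among $\tilde r_1^{\pm 1}$, $\tilde r_2^{\pm 1}$, $\tilde r_3^{\pm 1}$. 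Relabeling each edge $x_i$ to $\xi_i$ yields a $\mathcal{C}_0$-subchart with only $r_1^{\pm 1}$, $r_2^{\pm 1}$, $r_3^{\pm 1}$ white vertices and no vertex of type $r_4^{\pm 1}$, so the number of $r_4^{\pm 1}$ vertices is $0$, which is even.

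The main subtle point I anticipate is the role of the odd-$g$ hypothesis, which the outline above does not visibly invoke. My expectation is that it enters to rule out the equatorial case $h=g/2$, which occurs only when $g$ is even: there the separating curve bisects $S^2$ into two discs each carrying $g+1$ marked points, and the Dehn twist around it admits a second expression in the complementary generators $\xi_{g+2},\dots,\xi_{2g+1}$; the two expressions differ by a Dirac braid, which can force an odd number of $r_4^{\pm 1}$ vertices when one tries to reconcile them during the word-reduction. For odd $g$ we have $h\le(g-1)/2$, hence $2h+1<g+1$, so this asymmetry-induced defect never arises and the argument above yields a clean filling.
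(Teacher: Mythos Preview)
Your argument is clean for the case you treat, but the restriction to a single letter $w=\xi_j^{\pm1}$ is the gap. Although the paper's \emph{definition} of an $L_+$-move lists $w\in\mathcal{X}_0\cup\mathcal{X}_0^{-1}$, the box $T_2(h;\varepsilon)$ that the lemma asks you to fill arises from a chart move of transition with an \emph{arbitrary} word $w$ satisfying $w\,\ell_h^\varepsilon\,w^{-1}=\ell_h^\varepsilon$ in $\mathcal{M}_{0,2g+2}$; this is exactly what the paper's own proof handles, and it is what the application to Proposition~\ref{transition} (via Theorem~\ref{classif}) requires. Your three-case analysis in $j$ never confronts a general $\varphi=[w]$, and that is why you never see the odd-$g$ hypothesis: for single letters it is genuinely unnecessary.

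The paper's proof shows where odd $g$ enters. From $\varphi\tau_h\varphi^{-1}=\tau_h$ one gets that $\varphi$ fixes the separating curve $b_h$. When $g$ is odd the two complementary disks carry $2h+1$ and $2g-2h+1$ marked points, which are distinct, so $\varphi$ cannot swap sides; hence $\varphi$ lies in the subgroup generated by $\delta_j$ with $j\neq 2h+1$, and one can replace $w$ by a word $w'$ in those letters. The box is then split as $\Theta_2(h)\,|\,T_2(h)\,|\,\Theta_2(h)^*$: the middle box $T_2(h)$ is filled by exactly the letter-by-letter commutations you describe (your cases $j\le 2h$ and $j\ge 2h+2$), while the outer boxes convert between $w$ and $w'$ and contribute $r_4^{\pm1}$ vertices in mirrored pairs, forcing the total count to be even. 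Your braid-group lifting idea can be upgraded to this generality --- once you know $\varphi\in\langle\delta_j:j\neq 2h+1\rangle$, any lift of $\varphi$ to $B_{2g+2}(S^2)$ differs from a product of $x_j$'s ($j\neq 2h+1$) by at most the central Dirac braid, hence still centralizes $\tilde\ell_h$ --- but that upgrade \emph{is} the missing step, and it is precisely the step that consumes the odd-$g$ hypothesis. Your closing paragraph locates the obstruction correctly at the equatorial case $h=g/2$, but the mechanism is not two competing expressions for the Dehn twist; it is the existence of side-swapping centralizer elements that cannot be reached by single-letter transitions.
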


\begin{proof} 
We assume that $\varepsilon=+1$ for simplicity. 
Let $\varphi$ and $\tau_h$ be elements of $\mathcal{M}_{0,2g+2}$ 
represented by $w$ and $(\xi_1\xi_2\cdots\xi_{2h})^{4h+2}$, respectively. 
Since the intersection word of the boundary of the box labeled $T=T_2(h;+1)$ 
with respect to the $\mathcal{C}_0$--chart in Figure \ref{movesB} 
is $wsw^{-1}s'^{-1}=
w(\xi_1\xi_2\cdots\xi_{2h})^{4h+2}w^{-1}(\xi_1\xi_2\cdots\xi_{2h})^{-(4h+2)}$, 
we have a relation 
$\varphi\tau_h\varphi^{-1}\tau_h^{-1}=1$ in $\mathcal{M}_{0,2g+2}$. 

We consider a hyperelliptic involution $I$ on $\Sigma_g$ and 
think of $S^2$ as the quotient of $\Sigma_g$ by the action of $I$. 
The image of the simple closed curve $s_h$ in Figure \ref{curves1} 
under the double branched covering $\Sigma_g\rightarrow S^2$ 
is a simple closed curve $b_h$ on $S^2$ depicted in Figure \ref{scc}, 
and the square of a right-handed Dehn twist $T_h$ about $b_h$ 
represents the mapping class $\tau_h$. 
Note that $a_1,\ldots ,a_{2g+1}$ in Figure \ref{scc} 
are the same arcs as those depicted in Figure \ref{arcs}. 

\begin{figure}[ht!]
\labellist
\small \hair 2pt
\pinlabel $a_1$ [t] at 41 26
\pinlabel $a_{2h}$ [t] at 99 26
\pinlabel $a_{2h+2}$ [t] at 185 26
\pinlabel $a_{2g+1}$ [t] at 244 26
\pinlabel $b_h$ [l] at 137 52
\endlabellist
\centering
\includegraphics[scale=0.75]{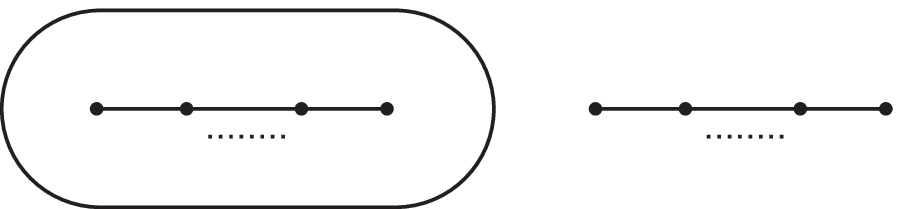}
\caption{Simple closed curve $b_h$ on $S^2$}
\label{scc}
\end{figure}

The relation $\varphi\tau_h\varphi^{-1}=\tau_h$ implies that 
$(b_h)F$ is isotopic to $b_h$ (see \cite[Fact 3.6]{FM2011} 
and \cite[Proposition 3.6]{PR2000}), 
where $F$ is an orientation preserving diffeomorphism on $S^2$
representing $\varphi$. 
Since $g$ is odd, two components of $S^2-b_h$ include different numbers of 
branch points. Thus we can assume that $F$ fixes $b_h$ pointwise. 
Cutting $S^2$ along $b_h$, we obtain orientation preserving diffeomorphisms 
on two $2$--disks which fix the boundary pointwise. 
The mapping class group $\mathcal{M}_{0,2h+1}^1$ (resp. $\mathcal{M}_{0,2g-2h+1}^1$) 
of the $2$--disk with $2h+1$ (resp. $2g-2h+1$) marked points 
is generated by the half twists about arcs corresponding to $a_1,\ldots ,a_{2h}$ 
(resp. $a_{2h+2},\ldots ,a_{2g+1}$) (see \cite[\S 9.1.3]{FM2011}). 
Hence $\varphi$ can be factorized 
as a product of $\delta_1^{\pm 1},\ldots ,\delta_{2h}^{\pm 1}$, $\delta_{2h+2}^{\pm 1},\ldots , 
\delta_{2g+1}^{\pm 1}$, and it can be represented by a word $w'$ in 
$\xi_1^{\pm 1},\ldots ,\xi_{2h}^{\pm 1}$, $\xi_{2h+2}^{\pm 1},\ldots ,\xi_{2g+1}^{\pm 1}$. 

We divide the box labeled $T_2(h;+1)$ into three boxes labeled 
$T_2(h)$, $\Theta_2(h)$, and $\Theta_2(h)^*$ as in Figure \ref{chartE}. 

\begin{figure}[ht!]
\labellist
\footnotesize \hair 2pt
\pinlabel $\ell_2(h)$ [r] at 0 52
\pinlabel $w$ [l] at 130 101
\pinlabel $w$ [l] at 130 3
\pinlabel $T_2(h;+1)$ [r] at 110 51
\pinlabel $=$ [l] at 157 50
\pinlabel $\ell_2(h)$ [r] at 202 52
\pinlabel $w$ [l] at 333 101
\pinlabel $w$ [l] at 333 3
\pinlabel $w'$ [l] at 340 76
\pinlabel $w'$ [l] at 340 30
\pinlabel $\Theta_2(h)^*$ [r] at 308 85
\pinlabel $\Theta_2(h)$ [r] at 303 16
\pinlabel $T_2(h)$ [r] at 303 51
\endlabellist
\centering
\includegraphics[scale=0.8]{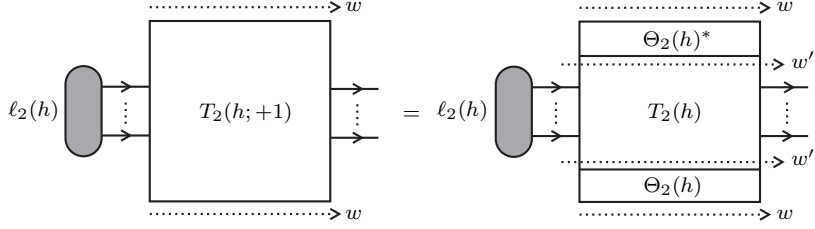}
\caption{Division of a box}
\label{chartE}
\end{figure}

The box labeled $\Theta_2(h)$ can be filled with edges and white vertices 
because both $w$ and $w'$ represent the same mapping class $\varphi$. 
The box labeled $\Theta_2(h)^*$ can be filled with the mirror image of the subchart 
filling the box labeled $\Theta_2(h)$ with edges orientation reversed. 
Since $w'$ is a word in 
$\xi_1^{\pm 1},\ldots ,\xi_{2h}^{\pm 1}$, $\xi_{2h+2}^{\pm 1},\ldots ,\xi_{2g+1}^{\pm 1}$, 
the box labeled $T_2(h)$ can be filled with copies of two kinds of 
subcharts depicted in Figure \ref{chartF}. 
Note that the subcharts depicted in Figure \ref{chartF} (a) and (b) correspond to 
$\xi_i\, (i=1,\ldots ,2h)$ and $\xi_j\, (j=2h+2,\ldots ,2g+1)$, respectively. 
The boxes labeled $\Omega_k\, (k=2,\ldots ,2h)$ and $\Omega_1$ in Figure \ref{chartF} (a) 
are filled with the subcharts depicted in Figure \ref{chartF} (c) and Figure \ref{chartG}, 
respectively. We also need subcharts corresponding to 
$\xi_i^{-1}\, (i=1,\ldots ,2h)$ and $\xi_j^{-1}\, (j=2h+2,\ldots ,2g+1)$. 

\begin{figure}[ht!]
\labellist
\footnotesize \hair 2pt
\pinlabel $\Omega_{i+1}$ [b] at 22 119
\pinlabel $\Omega_{2h}$ [b] at 22 78
\pinlabel $\Omega_1$ [b] at 22 60
\pinlabel $\Omega_2$ [b] at 22 44
\pinlabel $\Omega_i$ [b] at 22 0
\pinlabel (a) [t] at 22 -13
\pinlabel $2h$ [r] at 86 120
\pinlabel $1$ [r] at 86 99
\pinlabel $2h$ [r] at 86 64
\pinlabel $1$ [r] at 86 45
\pinlabel $2h$ [r] at 86 36
\pinlabel $1$ [r] at 86 16
\pinlabel $j$ [b] at 109 137
\pinlabel $j$ [t] at 109 0
\pinlabel $2h$ [l] at 130 120
\pinlabel $1$ [l] at 130 99
\pinlabel $2h$ [l] at 130 64
\pinlabel $1$ [l] at 130 45
\pinlabel $2h$ [l] at 130 36
\pinlabel $1$ [l] at 130 16
\pinlabel (b) [t] at 109 -13
\pinlabel $\Omega_k$ [b] at 200 61
\pinlabel $=$ [b] at 234 63
\pinlabel $2h$ [r] at 267 108
\pinlabel $k$+$1$ [r] at 267 87
\pinlabel $k$ [r] at 267 74
\pinlabel $k$--$1$ [r] at 267 60
\pinlabel $k$--$2$ [r] at 267 48
\pinlabel $1$ [r] at 267 22
\pinlabel $k$--$1$ [b] at 311 125
\pinlabel $k$ [t] at 311 9
\pinlabel $2h$ [l] at 354 108
\pinlabel $k$+$1$ [l] at 354 87
\pinlabel $k$ [l] at 354 74
\pinlabel $k$--$1$ [l] at 354 60
\pinlabel $k$--$2$ [l] at 354 48
\pinlabel $1$ [l] at 354 22
\pinlabel (c) [t] at 234 -13
\endlabellist
\centering
\includegraphics[scale=0.8]{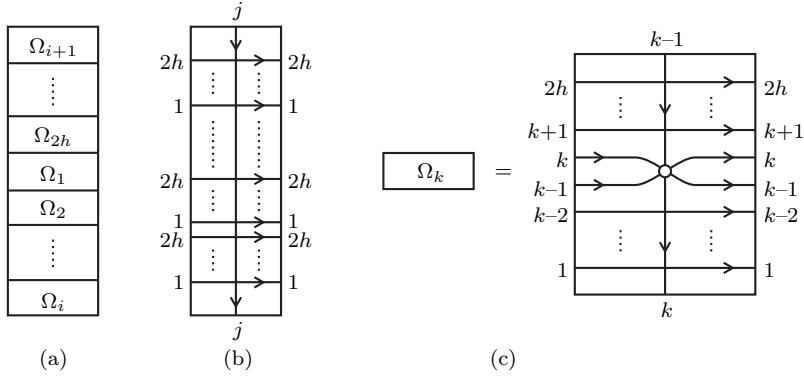}
\vspace{8mm}
\caption{Two kinds of subcharts}
\label{chartF}
\end{figure}

\begin{figure}[ht!]
\labellist
\footnotesize \hair 2pt
\pinlabel $\Omega_1$ [b] at 24 95
\pinlabel $=$ [b] at 65 98
\pinlabel $2h$ [r] at 110 182
\pinlabel $2h$--$1$ [r] at 110 170
\pinlabel $2h$--$2$ [r] at 110 159
\pinlabel $2h$--$3$ [r] at 110 147
\pinlabel $3$ [r] at 110 125
\pinlabel $2$ [r] at 110 114
\pinlabel $1$ [r] at 110 102
\pinlabel $2h$ [r] at 110 90
\pinlabel $2h$--$1$ [r] at 110 78
\pinlabel $2h$--$2$ [r] at 110 67
\pinlabel $4$ [r] at 110 45
\pinlabel $3$ [r] at 110 34
\pinlabel $2$ [r] at 110 23
\pinlabel $1$ [r] at 110 11
\pinlabel $2h$ [b] at 203 194
\pinlabel {\tiny $2h$--$2$} [l] at 205 131
\pinlabel $3$ [l] at 205 63
\pinlabel $2$ [l] at 205 40
\pinlabel $1$ [t] at 203 0
\pinlabel $2h$ [l] at 296 182
\pinlabel $2h$--$1$ [l] at 296 170
\pinlabel $2h$--$2$ [l] at 296 159
\pinlabel $2h$--$3$ [l] at 296 147
\pinlabel $3$ [l] at 296 125
\pinlabel $2$ [l] at 296 114
\pinlabel $1$ [l] at 296 102
\pinlabel $2h$ [l] at 296 90
\pinlabel $2h$--$1$ [l] at 296 78
\pinlabel $2h$--$2$ [l] at 296 67
\pinlabel $4$ [l] at 296 45
\pinlabel $3$ [l] at 296 34
\pinlabel $2$ [l] at 296 23
\pinlabel $1$ [l] at 296 11
\endlabellist
\centering
\includegraphics[scale=0.8]{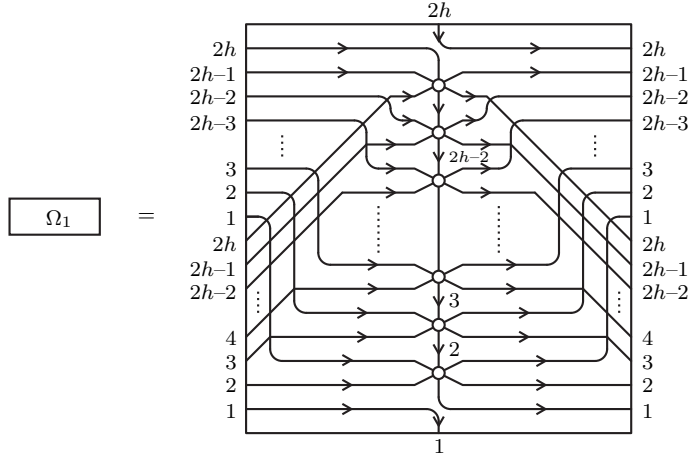}
\bigskip
\caption{Inside of the box labeled $\Omega_1$}
\label{chartG}
\end{figure}

The box labeled $T_2(h)$ is filled with edges and 
white vertices of types $r_1(i,j)^{\pm 1}$ and $r_2(i)^{\pm 1}$. 
The number of white vertices of type $r_4^{\pm 1}$ included in the box labeled $\Theta_2(h)^*$ 
is equal to that for the box labeled $\Theta_2(h)$. 
Therefore the box labeled $T_2(h;+1)$ is filled with edges, white vertices of 
types $r_1(i,j)^{\pm 1}$, $r_2(i)^{\pm 1}$, $r_3^{\pm 1}$, 
and an even number of white vertices of types $r_4^{\pm 1}$. 
\end{proof}

We now prove the invariance of $w$ under chart moves of transition. 

\begin{proof}[of Proposition \ref{transition}] 
Let $\Gamma$ be a $\mathcal{C}_0$--chart $\Gamma$ in $B$ 
and $\Gamma'$ a $\mathcal{C}_0$--chart in $B$ obtained from $\Gamma$ 
by a chart move of transition. 
Suppose that the chart move of transition is an ${\rm L}_0$--move. 
Two $\mathcal{C}_0$--charts $\Gamma$ and $\Gamma'$ are related by a 
chart move as in Figure \ref{movesB} and the box is labeled $T=T_1(k,\ell)$ for some 
$k,\ell=1,\ldots ,2g+1$. 
Let $\Gamma_1$ be a $\mathcal{C}_0$--chart in $B$ obtained from $\Gamma'$ 
by replacing the subchart inside the box labeled $T_1(k,\ell)$ with 
a subchart satisfying the condition given in Lemma \ref{t1}. 
By virtue of Proposition \ref{typew} and Lemma \ref{t1}, 
we have $w(\Gamma)=w(\Gamma_1)=w(\Gamma')$. 
It follows from a similar argument together with Proposition \ref{typew} and 
Lemma \ref{t2} that $w(\Gamma)=w(\Gamma')$ if $g$ is odd and 
$\Gamma'$ is obtained from $\Gamma$ by an ${\rm L}_+$--move. 
Thus we have proved the proposition. 
\end{proof}


\section{Stable classification}\label{section5}


In this section we define a $\mathbb{Z}_2$--valued invariant of 
hyperelliptic Lefschetz fibrations of odd genus and 
show a stable classification theorem for hyperelliptic Lefschetz fibrations. 

We first give a definition of the invariant. 
Let $B$ be a connected closed oriented surface and $g$ an integer greater than one. 

\begin{defn}\label{w2} 
Let $(f,\Phi)$ be a hyperelliptic Lefschetz fibration of genus $g$ over $B$ and 
$\rho:\pi_1(B-\Delta,b_0)\rightarrow\mathcal{H}_g$ the monodromy representation 
with respect to $\Phi$. 
Let $\Gamma$ be a $\mathcal{C}_0$--chart in $B$ 
such that the homomorphism determined by $\Gamma$ is equal to 
$\rho_0:=\pi\circ \rho:\pi_1(B-\Delta,b_0)\rightarrow\mathcal{M}_{0,2g+2}$. 
Suppose that $g$ is odd. 
We set $w(f,\Phi):=w(\Gamma)$, where $w(\Gamma)$ is an element of $\mathbb{Z}_2$ 
defined in Definition \ref{w}. 
\end{defn}

\begin{rem}
If we have a $\hat{\mathcal{C}}$--chart $\hat{\Gamma}$ corresponding to $(f,\Phi)$, 
a $\mathcal{C}_0$--chart $\Gamma$ above is obtained from $\hat{\Gamma}$ 
by changing the labels $\zeta_i$ of all edges into $\xi_i$ and replacing all white vertices 
of types $\hat{r}_3^{\pm 1}$ and $\hat{r}_5^{\pm 1}$ with $\mathcal{C}_0$--subcharts 
as in Figure \ref{replaceA} and Figure \ref{replaceB}. 
Therefore $w(f,\Phi)$ is equal to 
the number modulo $2$ of white vertices of types $\hat{r}_4^{\pm 1}$ 
included in $\hat{\Gamma}$. 
\end{rem}

\begin{figure}[ht!]
\labellist
\footnotesize \hair 2pt
\pinlabel $1$ [r] at 0 119
\pinlabel $2g+1$ [r] at 0 98
\pinlabel $2g+1$ [r] at 0 88
\pinlabel $1$ [r] at 0 71
\pinlabel $1$ [r] at 0 61
\pinlabel $2g+1$ [r] at 0 42
\pinlabel $2g+1$ [r] at 0 32
\pinlabel $1$ [r] at 0 14
\pinlabel $\hat{r}_3$ [r] at 46 66
\pinlabel $1$ [r] at 190 126
\pinlabel $2g+1$ [r] at 190 105
\pinlabel $2g+1$ [r] at 190 95
\pinlabel $1$ [r] at 190 77
\pinlabel $1$ [r] at 190 57
\pinlabel $2g+1$ [r] at 190 36
\pinlabel $2g+1$ [r] at 190 26
\pinlabel $1$ [r] at 190 9
\pinlabel $r_3$ [r] at 236 101
\pinlabel $r_3$ [r] at 236 32
\endlabellist
\centering
\includegraphics[scale=0.75]{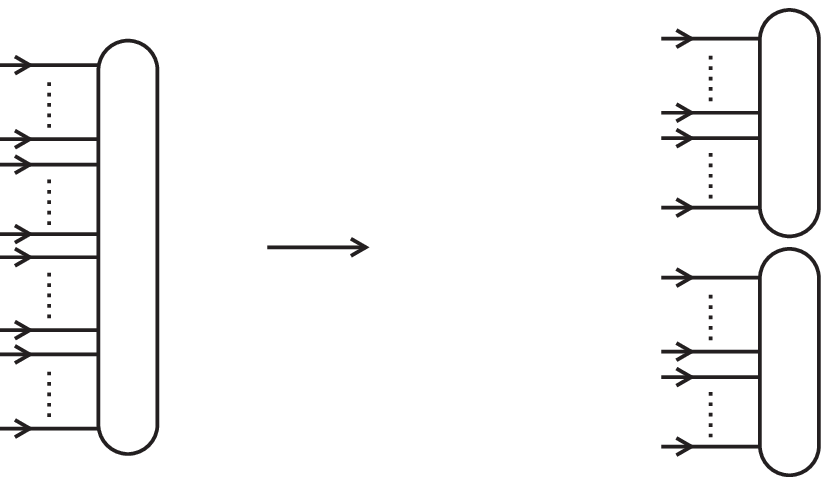}
\caption{Replacing a vertex of type $\hat{r}_3$}
\label{replaceA}
\end{figure}

\begin{figure}[ht!]
\labellist
\footnotesize \hair 2pt
\pinlabel $1$ [r] at 0 86
\pinlabel $2g+1$ [r] at 0 65
\pinlabel $2g+1$ [r] at 0 55
\pinlabel $1$ [r] at 0 37
\pinlabel $1$ [l] at 74 86
\pinlabel $2g+1$ [l] at 74 65
\pinlabel $2g+1$ [l] at 74 55
\pinlabel $1$ [l] at 74 37
\pinlabel $i$ [l] at 40 113
\pinlabel $i$ [l] at 40 7
\pinlabel $\hat{r}_5$ [r] at 45 60
\pinlabel $1$ [r] at 194 86
\pinlabel $2g+1$ [r] at 194 65
\pinlabel $2g+1$ [r] at 194 55
\pinlabel $1$ [r] at 194 37
\pinlabel $1$ [l] at 309 86
\pinlabel $2g+1$ [l] at 309 65
\pinlabel $2g+1$ [l] at 309 55
\pinlabel $1$ [l] at 309 37
\pinlabel $i$ [l] at 255 113
\pinlabel $i$ [l] at 255 7
\pinlabel $r_3$ [r] at 239 60
\pinlabel $r_3$ [r] at 278 60
\endlabellist
\centering
\includegraphics[scale=0.75]{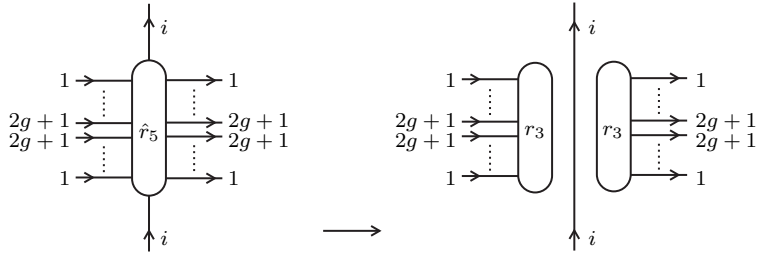}
\caption{Replacing a vertex of type $\hat{r}_5$}
\label{replaceB}
\end{figure}

Let $(f,\Phi)$ and $(f',\Phi')$ be hyperelliptic Lefschetz fibrations of genus $g$ over $B$. 

\begin{prop}\label{inv}
If $g$ is odd and $(f,\Phi)$ is $\mathcal{H}$--isomorphic to $(f',\Phi')$, 
then we have $w(f,\Phi)=w(f',\Phi')$. 
\end{prop}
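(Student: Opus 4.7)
The plan is to reduce Proposition~\ref{inv} to invariance of the chart quantity $w(\Gamma)$ under chart moves, using Theorem~\ref{classif} to convert an $\mathcal{H}$-isomorphism between the two fibrations into a chain of chart moves between their underlying $\mathcal{C}_0$-charts.

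First I would settle that $w(f,\Phi)$ is actually well defined, independent of the $\mathcal{C}_0$-chart $\Gamma$ determining $\rho_0:=\pi\circ\rho$. Given two such charts $\Gamma_1,\Gamma_2$ in $B$ with the same base point $b_0$ and the same set of black vertices $\Delta$, they determine strictly equivalent monodromy representations (indeed, the very same $\rho_0$), so Theorem~\ref{classif}(2) transforms one into the other by a finite sequence of chart moves of type W, of transition, of conjugacy type, and diffeomorphisms of $B$ isotopic to the identity rel $b_0$. Proposition~\ref{typew} and Proposition~\ref{transition} (using the hypothesis that $g$ is odd), together with the evident invariance under conjugacy moves and base-point-preserving diffeomorphisms, give $w(\Gamma_1)=w(\Gamma_2)$.

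Next, assuming $(f,\Phi)$ is $\mathcal{H}$-isomorphic to $(f',\Phi')$, Lemma~\ref{monod}(1) supplies an orientation preserving diffeomorphism $h\co B\rightarrow B$ and an $\alpha\in\mathcal{H}_g$ satisfying $h(\Delta)=\Delta'$, $h(b_0)=b'_0$, and $\rho'(h_\#(\gamma))=\alpha^{-1}\rho(\gamma)\alpha$ for every $\gamma\in\pi_1(B-\Delta,b_0)$. Post-composing this identity with the projection $\pi\co\mathcal{H}_g\rightarrow\mathcal{M}_{0,2g+2}$ yields $\rho'_0(h_\#(\gamma))=\pi(\alpha)^{-1}\rho_0(\gamma)\pi(\alpha)$, so the $\mathcal{M}_{0,2g+2}$-monodromy representations $\rho_0$ and $\rho'_0$ are equivalent in the sense used in Theorem~\ref{classif}.

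Finally, choose $\mathcal{C}_0$-charts $\Gamma$ and $\Gamma'$ with $\rho_\Gamma=\rho_0$ and $\rho_{\Gamma'}=\rho'_0$, which exist by Theorem~\ref{exist}. By Theorem~\ref{classif}(1), $\Gamma$ can be transformed into $\Gamma'$ by a finite sequence of chart moves of type W, of transition, of conjugacy type, and an orientation preserving diffeomorphism of $B$ sending $b_0$ to $b'_0$. Propositions~\ref{typew} and~\ref{transition} (here is where the parity hypothesis on $g$ is essential), plus the obvious invariance of $w$ under the remaining moves, show that $w$ is preserved throughout the sequence, so $w(f,\Phi)=w(\Gamma)=w(\Gamma')=w(f',\Phi')$. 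No step really poses an obstacle, since all the substance has been packaged into Propositions~\ref{typew} and~\ref{transition}; the only point demanding care is the well-definedness, which is itself a special case of the main argument.
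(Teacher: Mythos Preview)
Your proof is correct and follows essentially the same strategy as the paper's one-line argument: translate the $\mathcal{H}$-isomorphism into a sequence of chart moves and then invoke Propositions~\ref{typew} and~\ref{transition} (together with the obvious invariance under conjugacy moves and diffeomorphisms). The only difference is cosmetic: the paper routes through $\hat{\mathcal{C}}$-charts using Propositions~\ref{correspondence} and~\ref{isomorphism}, while you project via $\pi$ first and work directly with $\mathcal{C}_0$-charts using Lemma~\ref{monod} and Theorem~\ref{classif}; your path is slightly more direct and has the added virtue of making the well-definedness of $w(f,\Phi)$ explicit.
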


\begin{proof} 
The statement follows from Propositions \ref{correspondence}, 
\ref{isomorphism}, \ref{typew}, and \ref{transition}. 
\end{proof}

Thus $w(f,\Phi)$ turns out to be an invariant of the $\mathcal{H}$--isomorphism class 
of $(f,\Phi)$. 

We next prove a stable classification theorem for hyperelliptic Lefschetz fibrations, 
which improves the stabilization theorem shown in \cite{EK2013}. 
Let $(f,\Phi)$ and $(f',\Phi')$ be hyperelliptic Lefschetz fibrations of genus $g$ over $B$. 
Let $\Gamma_0$ be a $\hat{\mathcal{C}}$--chart in $S^2$ depicted in Figure \ref{chartH} 
and $(f_0,\Phi_0)$ a hyperelliptic Lefschetz fibration described by $\Gamma_0$. 

\begin{figure}[ht!]
\labellist
\footnotesize \hair 2pt
\pinlabel $1$ [r] at 0 114
\pinlabel $2g+1$ [r] at 0 92
\pinlabel $2g+1$ [r] at 0 82
\pinlabel $1$ [r] at 0 65
\pinlabel $1$ [r] at 0 56
\pinlabel $2g+1$ [r] at 0 37
\pinlabel $2g+1$ [r] at 0 27
\pinlabel $1$ [r] at 0 8
\endlabellist
\centering
\includegraphics[scale=0.75]{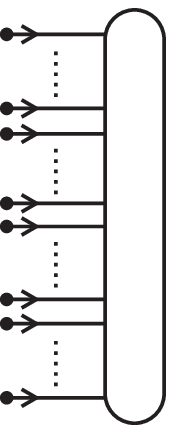}
\caption{$\hat{\mathcal{C}}$--chart $\Gamma_0$ in $S^2$}
\label{chartH}
\end{figure}

A Hurwitz system of $(f_0,\Phi_0)$ is given by 
$(\zeta_1,\zeta_2,\ldots,\zeta_{2g+1},\zeta_{2g+1},\ldots,\zeta_2,\zeta_1)^2$. 
The total space of $f_0$ is diffeomorphic to 
$\mathbb{CP}^2\# (4g+5)\overline{\mathbb{CP}}^2$, 
which is a natural generalization of the rational elliptic surface $E(1)$ 
(see Gompf and Stipsicz \cite[\S 8.4]{GS1999} and Ito \cite{Ito2002}). 

\begin{defn}\label{stiso} 
We say that $(f,\Phi)$ is {\it stably isomorphic} to $(f',\Phi')$ 
if there exists a non-negative integer $N$ such that an $\mathcal{H}$--fiber sum 
$(f\# Nf_0,\Phi)$ is $\mathcal{H}$--isomorphic 
to an $\mathcal{H}$--fiber sum $(f'\# Nf_0, \Phi')$. 
\end{defn}

\begin{rem} 
If $N$ is positive, then the $\mathcal{H}$--isomorphism class of an $\mathcal{H}$--fiber sum 
$(f\# Nf_0,\Phi)$ does not depend on choices of orientation preserving diffeomorphisms 
of $\Sigma_g$ used for the construction of $(f\# Nf_0,\Phi)$ 
(see Proof of Theorem \ref{stable}). 
\end{rem}

We give a complete classification of the stable isomorphism classes of 
hyperelliptic Lefschetz fibrations of genus $g$ over $B$. 

\begin{thm}\label{stable}
Let $(f,\Phi)$ and $(f',\Phi')$ be 
hyperelliptic Lefschetz fibrations of genus $g$ over $B$. 
Then $(f,\Phi)$ is stably isomorphic to $(f',\Phi')$ if and only if 
the following conditions hold: 
{\rm (i)} $n_0^{\pm}(f)=n_0^{\pm}(f');$ 
{\rm (ii)} $n_h^{\pm}(f)=n_h^{\pm}(f')$ for every $h=1,\ldots ,[g/2];$ 
{\rm (iii)} $w(f,\Phi)=w(f',\Phi')$ if $g$ is odd. 
\end{thm}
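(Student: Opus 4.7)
The plan is to prove the two directions of Theorem \ref{stable} separately, with the \emph{only if} direction following from invariance and additivity of both families of invariants, and the \emph{if} direction being the substantive part that relies on Proposition \ref{isomorphism} and a chart-calculus reduction.

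For the \emph{only if} direction, suppose that $(f \# N f_0, \Phi)$ is $\mathcal{H}$-isomorphic to $(f' \# N f_0, \Phi')$. The singular fiber counts of each type are $\mathcal{H}$-isomorphism invariants and are additive under $\mathcal{H}$-fiber sum; since $N f_0$ contributes equally on both sides, subtracting yields conditions (i) and (ii). For condition (iii) in odd genus, Proposition \ref{inv} gives $\mathcal{H}$-isomorphism invariance of $w$, and the fiber-sum description $\Gamma \#_w \Gamma_0^{\oplus N}$ from Section 3 makes $w$ additive under $\mathcal{H}$-fiber sum, since the hoops on the connecting tube carry no white vertices of type $\hat{r}_4^{\pm 1}$. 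Inspection of Figure \ref{chartH} shows $w(f_0, \Phi_0) = 0$, so $w(f \# N f_0, \Phi) = w(f, \Phi)$ and similarly on the primed side, forcing (iii).

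For the \emph{if} direction, choose $\hat{\mathcal{C}}$-charts $\Gamma$ and $\Gamma'$ corresponding to $(f, \Phi)$ and $(f', \Phi')$. By Proposition \ref{isomorphism} it suffices to find $N \geq 0$ such that $\Gamma \#_w \Gamma_0^{\oplus N}$ and $\Gamma' \#_w \Gamma_0^{\oplus N}$ are related by a finite sequence of chart moves of type W, of transition, and of conjugacy type, together with orientation-preserving diffeomorphisms of $B$. The strategy is to reduce each fiber-summed chart, for sufficiently large $N$, to a standard form consisting of the prescribed multiset of black vertices plus a parity-recording collection of white vertices of type $\hat{r}_4^{\pm 1}$. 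Conditions (i) and (ii) equate the two black-vertex multisets, and condition (iii) in odd genus equates the $\hat{r}_4^{\pm 1}$ parities; in even genus no analogous condition is needed because Proposition \ref{transition} fails there and chart moves of transition can freely toggle the $\hat{r}_4^{\pm 1}$ parity. Once both charts are in standard form with matching data, births and deaths of pairs of $(\hat{r}_4, \hat{r}_4^{-1})$ vertices as in Figure \ref{movesA}(c) identify them. The $N$ copies of $\Gamma_0$ supply $\hat{r}_3$ vertices needed to realize in $\hat{\mathcal{C}}$-chart moves any $\mathcal{M}_{0,2g+2}$-relation that lifts to $\mathcal{H}_g$ only modulo the central kernel $\langle \iota \rangle$ of $\pi \colon \mathcal{H}_g \to \mathcal{M}_{0,2g+2}$.

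The main obstacle is the existence of the standard-form reduction. Concretely, one must show that after adding enough copies of $\Gamma_0$, any $\hat{\mathcal{C}}$-chart with a given multiset of black vertices is chart-equivalent to a canonical arrangement of those black vertices with a controlled count of $\hat{r}_4^{\pm 1}$ white vertices depending only on $w$. This is a chart-calculus argument parallel to the general Lefschetz-fibration stabilization of \cite{EHKT2014}, but carried out inside the hyperelliptic chart setting and requiring careful tracking of the Dirac braid relator $\hat{r}_4$ — precisely the tracking that necessitated Propositions \ref{typew} and \ref{transition}.
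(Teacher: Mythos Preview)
Your only-if direction matches the paper's. The if-direction plan has the right overall shape --- reduce both fiber-summed charts to a normal form and compare --- but there are two genuine gaps in the mechanism you propose.

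First, your handling of even genus is not justified. You assert that when $g$ is even, chart moves of transition can ``freely toggle the $\hat r_4^{\pm 1}$ parity,'' but nothing in the paper establishes this; the failure of Proposition~\ref{transition} for even $g$ only says the invariance proof breaks down, not that toggling is actually achievable. The paper proceeds by a different mechanism entirely: an \emph{edge-counting identity}. Summing signed edge-incidences over all vertices of a $\hat{\mathcal{C}}$-chart $\Gamma$ yields
\[
4(2g+1)m_3(\Gamma)+2(g+1)(2g+1)m_4(\Gamma)-\sum_i n_0(i)(\Gamma)-4\sum_h h(2h+1)n_h(\Gamma)=0,
\]
and hence, under conditions (i) and (ii),
\[
2m_3(\Gamma)+(g+1)m_4(\Gamma)=2m_3(\Gamma')+(g+1)m_4(\Gamma').
\]
When $g$ is even this forces $m_4(\Gamma)\equiv m_4(\Gamma')\pmod 2$ automatically; when $g$ is odd one invokes condition (iii). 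This relation has no counterpart in your outline.

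Second, your proposed normal form --- black vertices plus a parity-recording collection of $\hat r_4^{\pm1}$ white vertices --- is not reachable as stated. After eliminating $\hat r_1,\hat r_2,\hat r_5$ vertices the chart still carries both $\hat r_3^{\pm1}$ and $\hat r_4^{\pm1}$ white vertices (in the paper's notation, copies of $R_3,R_3',R_4,R_4'$), and you need a way to trade between them. The paper's key move is that $2R_4$ transforms into $(g+1)R_3$ via chart moves (\cite[Lemma~4.1]{Endo2008}); together with the displayed identity this lets one convert the even difference $m_4(\Gamma)-m_4(\Gamma')=2\ell$ into $R_3$-copies and then match all pieces exactly (including the count $k$ of surviving $\Gamma_0$ copies, which is pinned down by $n_0^+$). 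Without this conversion and without the edge-counting identity linking $m_3$ to $m_4$, your normal forms have no reason to coincide even when all stated invariants agree.
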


\begin{proof} 
We first prove the `if' part. 
Assume that $(f,\Phi)$ and $(f',\Phi')$ satisfy the conditions (i), (ii), and (iii). 
Let $\Gamma$ and $\Gamma'$ be $\hat{\mathcal{C}}$--charts in $B$ 
corresponding to $(f,\Phi)$ and $(f',\Phi')$, respectively. 
Since every edge has two adjacent vertices, 
the sum of the signed numbers of adjacent edges for all vertices of $\Gamma$ 
is equal to zero: 
\[
4(2g+1)m_3(\Gamma)+2(g+1)(2g+1)m_4(\Gamma)-\sum_{i=1}^{2g+1}n_0(i)(\Gamma)
-4\sum_{h=1}^{[g/2]}h(2h+1) n_h(\Gamma)=0. 
\]
A similar equality for $\Gamma'$ also holds. 
Interpreting the conditions (i) and (ii) as conditions on $\Gamma$ and $\Gamma'$, 
we have $\sum_{i=1}^{2g+1}n_0(i)(\Gamma)=\sum_{i=1}^{2g+1}n_0(i)(\Gamma')$ 
and $n_h(\Gamma)=n_h(\Gamma')$ for $h=1,\ldots ,[g/2]$. 
Thus we obtain 
\begin{equation}
2m_3(\Gamma)+(g+1)m_4(\Gamma)=2m_3(\Gamma')+(g+1)m_4(\Gamma'). \tag{$*$}
\end{equation}

Let $N$ be an integer larger than both of the number of edges of $\Gamma$ 
and that of $\Gamma'$. 
Choose a base point $b_0\in B-(\Gamma\cup\Gamma')$. 
The $\mathcal{H}$--fiber sum $(f\# Nf_0,\Phi)$ is described by a chart 
$(\cdots ((\Gamma\#_{w_1}\Gamma_0)\#_{w_2}\Gamma_0)\cdots )\#_{w_N}\Gamma_0$ 
for some words $w_1,\ldots ,w_N$ in $\hat{\mathcal{X}}\cup\hat{\mathcal{X}}^{-1}$. 
Since hoops surrounding $\Gamma_0$ can be removed by use of the edges of $\Gamma_0$ 
as in Figure \ref{removeA}, 
the chart is transformed into a product $\Gamma\oplus N\Gamma_0$ 
by channel changes. Similarly, 
the $\mathcal{H}$--fiber sum $f'\# Nf_0$ is described by a product 
$\Gamma'\oplus N\Gamma_0$. 

\begin{figure}[ht!]
\labellist
\footnotesize \hair 2pt
\pinlabel $i$ [b] at 41 71
\pinlabel $1$ [b] at 23 52
\pinlabel $i$ [b] at 43 52
\pinlabel $2g\negthinspace +\negthinspace 1$ [b] at 61 52
\pinlabel $T_0$ at 43 11
\pinlabel $i$ [b] at 188 72
\pinlabel $1$ [b] at 142 52
\pinlabel $2g\negthinspace +\negthinspace 1$ [b] at 179 52
\pinlabel $T_0$ at 161 11
\pinlabel $1$ [b] at 257 52
\pinlabel $i$ [b] at 275 52
\pinlabel $2g\negthinspace +\negthinspace 1$ [b] at 293 52
\pinlabel $T_0$ at 276 11
\endlabellist
\centering
\includegraphics[scale=0.75]{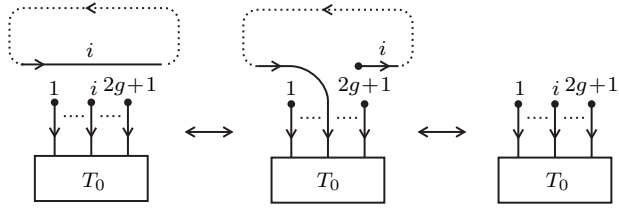}
\caption{Removing a hoop}
\label{removeA}
\end{figure}

We choose and fix $2g+1$ edges of $\Gamma_0$ which are 
labeled with $1,2,\ldots ,2g+1$ and adjacent to black vertices. 
We apply chart moves only to these edges in the following. 
Since $\Gamma_0$ can pass through any edge of $\Gamma$ as shown in Figure \ref{pass}, 
we can move $\Gamma_0$ to any region of $B-\Gamma$ by channel changes. 
\begin{figure}[ht!]
\labellist
\footnotesize \hair 2pt
\pinlabel $i$ [b] at 35 69
\pinlabel $1$ [b] at 18 51
\pinlabel $i$ [b] at 36 52
\pinlabel $2g\negthinspace +\negthinspace 1$ [b] at 55 51
\pinlabel $T_0$ at 35 11
\pinlabel {(a)} [t] at 37 0
\pinlabel $i$ [b] at 106 72
\pinlabel $i$ [b] at 160 72
\pinlabel $1$ [b] at 115 52
\pinlabel $2g\negthinspace +\negthinspace 1$ [b] at 151 52
\pinlabel $T_0$ at 134 11
\pinlabel {(b)} [t] at 134 0
\pinlabel $i$ [t] at 200 66
\pinlabel $i$ [t] at 255 66
\pinlabel $2g\negthinspace +\negthinspace 1$ [t] at 208 86
\pinlabel $1$ [t] at 246 86
\pinlabel $T_0$ at 228 126
\pinlabel {(c)} [t] at 228 0
\pinlabel $i$ [t] at 322 66
\pinlabel $2g\negthinspace +\negthinspace 1$ [t] at 303 86
\pinlabel $i$ [t] at 321 85
\pinlabel $1$ [t] at 341 86
\pinlabel $T_0$ at 322 126
\pinlabel {(d)} [t] at 321 0
\endlabellist
\centering
\includegraphics[scale=0.75]{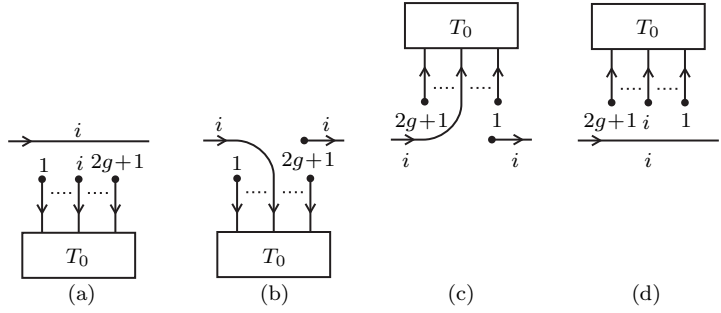}
\vspace{3mm}
\caption{Passing through an edge}
\label{pass}
\end{figure}
For each edge of $\Gamma$, we move a copy of $\Gamma_0$ to a region adjacent to 
the edge and apply a channel change to the edge and $\Gamma_0$ 
as in Figure \ref{pass} (a) and (b). 
Applying chart moves of transition to each component of the chart 
as in Figure \ref{movesE}, 
we remove white vertices of type $\hat{r}_1(i,j)^{\pm 1}, \hat{r}_2(i)^{\pm 1},\hat{r}_5^{\pm 1}$ 
to obtain a union of copies of 
$L_0(i), L_h, L'_h, R_3, R'_3, R_4, R'_4, \Gamma_0$ shown in Figures 
\ref{chartI}, \ref{chartJ}, 
where we use a simplification of diagrams as in Figure \ref{symbol}. 
Although $R_3$ is the same chart as $\Gamma_0$, 
we distinguish $R_3$ from $\Gamma_0$s 
which do not come from vertices of type $r_3$ in $\Gamma$. 

\begin{figure}[ht!]
\labellist
\footnotesize \hair 2pt
\pinlabel $j$ [br] at 2 153
\pinlabel $i$ [tr] at 0 123
\pinlabel $i$ [bl] at 33 153
\pinlabel $j$ [tl] at 33 125
\pinlabel $j$ [br] at 72 153
\pinlabel $i$ [bl] at 103 153
\pinlabel $j$ [tl] at 103 125
\pinlabel $j$ [br] at 140 156
\pinlabel $i$ [r] at 133 138
\pinlabel $j$ [tr] at 140 118
\pinlabel $i$ [bl] at 177 156
\pinlabel $j$ [l] at 184 138
\pinlabel $i$ [tl] at 177 120
\pinlabel $j$ [br] at 242 156
\pinlabel $i$ [r] at 235 138
\pinlabel $i$ [bl] at 278 156
\pinlabel $j$ [l] at 286 138
\pinlabel $i$ [tl] at 279 120
\pinlabel $1$ [r] at 30 77
\pinlabel $2g+1$ [r] at 30 56
\pinlabel $2g+1$ [r] at 30 47
\pinlabel $1$ [r] at 30 28
\pinlabel $i$ [l] at 72 96
\pinlabel $1$ [l] at 105 77
\pinlabel $2g+1$ [l] at 105 56
\pinlabel $2g+1$ [l] at 105 47
\pinlabel $1$ [l] at 105 28
\pinlabel $i$ [l] at 71 9
\pinlabel $1$ [r] at 175 77
\pinlabel $2g+1$ [r] at 175 56
\pinlabel $2g+1$ [r] at 175 47
\pinlabel $1$ [r] at 175 28
\pinlabel $i$ [l] at 217 93
\pinlabel $1$ [l] at 251 77
\pinlabel $2g+1$ [l] at 251 56
\pinlabel $2g+1$ [l] at 251 47
\pinlabel $1$ [l] at 251 28
\endlabellist
\centering
\includegraphics[scale=0.75]{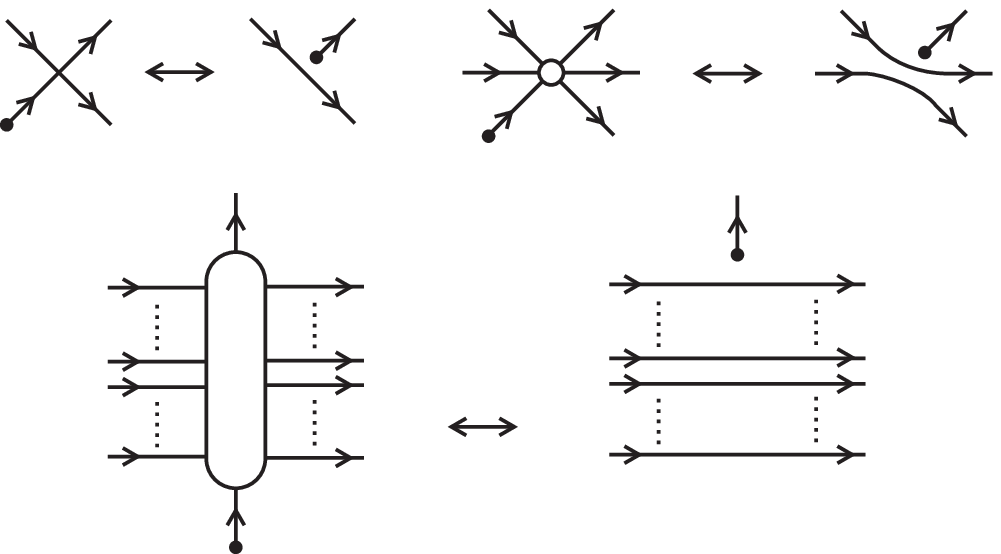}
\caption{Chart moves of transition}
\label{movesE}
\end{figure}

\begin{figure}[ht!]
\labellist
\footnotesize \hair 2pt
\pinlabel $i$ [b] at 24 64
\pinlabel $1$ [r] at 173 113
\pinlabel $2h$ [r] at 173 92
\pinlabel $1$ [r] at 173 82
\pinlabel $2h$ [r] at 173 65
\pinlabel $1$ [r] at 173 30
\pinlabel $2h$ [r] at 173 8
\pinlabel $1$ [l] at 304 113
\pinlabel $2h$ [l] at 304 92
\pinlabel $1$ [l] at 304 82
\pinlabel $2h$ [l] at 304 65
\pinlabel $1$ [l] at 304 30
\pinlabel $2h$ [l] at 304 8
\endlabellist
\centering
\includegraphics[scale=0.75]{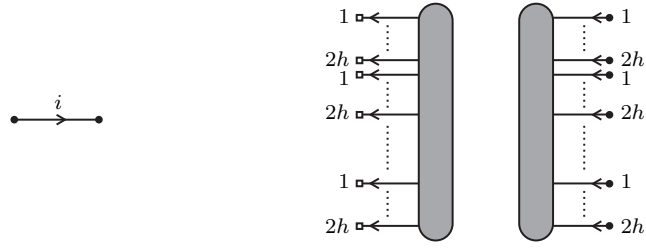}
\caption{Charts $L_0(i)$, $L_h$, $L'_h$}
\label{chartI}
\end{figure}

\begin{figure}[ht!]
\labellist
\footnotesize \hair 2pt
\pinlabel $1$ [r] at 0 113
\pinlabel $2g+1$ [r] at 0 92
\pinlabel $2g+1$ [r] at 0 82
\pinlabel $1$ [r] at 0 65
\pinlabel $1$ [r] at 0 55
\pinlabel $2g+1$ [r] at 0 37
\pinlabel $2g+1$ [r] at 0 27
\pinlabel $1$ [r] at 0 7
\pinlabel $1$ [l] at 126 113
\pinlabel $2g+1$ [l] at 126 92
\pinlabel $2g+1$ [l] at 126 82
\pinlabel $1$ [l] at 126 65
\pinlabel $1$ [l] at 126 55
\pinlabel $2g+1$ [l] at 126 37
\pinlabel $2g+1$ [l] at 126 27
\pinlabel $1$ [l] at 126 7
\pinlabel $2g+1$ [r] at 203 113
\pinlabel $1$ [r] at 203 90
\pinlabel $2g+1$ [r] at 203 56
\pinlabel $1$ [r] at 203 37
\pinlabel $2g+1$ [r] at 203 27
\pinlabel $1$ [r] at 203 8
\pinlabel $2g+1$ [l] at 327 113
\pinlabel $1$ [l] at 327 90
\pinlabel $2g+1$ [l] at 327 56
\pinlabel $1$ [l] at 327 37
\pinlabel $2g+1$ [l] at 327 27
\pinlabel $1$ [l] at 327 8
\endlabellist
\centering
\includegraphics[scale=0.75]{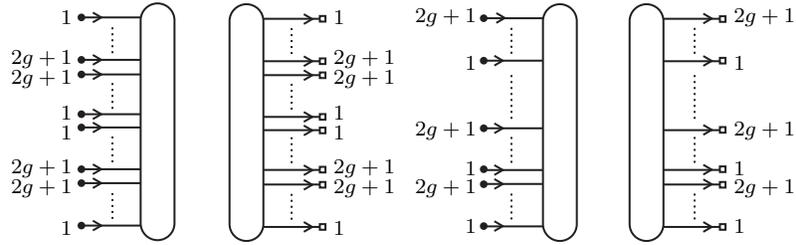}
\caption{Charts $R_3$, $R'_3$, $R_4$, $R'_4$}
\label{chartJ}
\end{figure}

\begin{figure}[ht!]
\labellist
\footnotesize \hair 2pt
\pinlabel $i$ [b] at 3 45
\pinlabel $0$ [b] at 89 57
\pinlabel $i$ [b] at 107 71
\pinlabel $2g$ [b] at 126 57
\pinlabel $T_0$ at 108 14
\pinlabel $=$ at 40 28
\endlabellist
\centering
\includegraphics[scale=0.75]{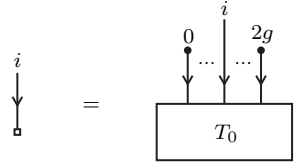}
\caption{Simplification of diagram}
\label{symbol}
\end{figure}

If there is a pair of $R_3$ and $R'_3$, 
we remove them by a death of a pair of white vertices 
to obtain $4(2g+1)$ copies of $\Gamma_0$. 
Similarly, we remove a pair of $R_4$ and $R'_4$ 
to obtain $2(g+1)(2g+1)$ copies of $\Gamma_0$. 
Since there is at least one $\Gamma_0$, any copy of $L_0(i)$ can be transformed into 
$L_0(1)$ as in Figure \ref{label}. 

\begin{figure}[ht!]
\labellist
\footnotesize \hair 2pt
\pinlabel $j$ [l] at 28 152
\pinlabel $0$ [b] at 83 186
\pinlabel $i$ [b] at 84 201
\pinlabel $2g$ [b] at 120 186
\pinlabel $T_0$ at 101 143
\pinlabel $j$ [l] at 199 152
\pinlabel $i$ [l] at 212 164
\pinlabel $0$ [b] at 255 186
\pinlabel $j$ [b] at 255 201
\pinlabel $2g$ [b] at 292 186
\pinlabel $T_0$ at 274 143
\pinlabel $j$ [l] at 370 152
\pinlabel $i$ [l] at 382 150
\pinlabel $j$ [l] at 390 142
\pinlabel $0$ [b] at 426 186
\pinlabel $j$ [b] at 444 186
\pinlabel $2g$ [b] at 463 186
\pinlabel $T_0$ at 444 143
\pinlabel $i$ [b] at 26 91
\pinlabel $j$ [t] at 26 46
\pinlabel $i$ [b] at 26 23
\pinlabel $j$ [t] at 26 13
\pinlabel $0$ [b] at 82 73
\pinlabel $j$ [b] at 100 73
\pinlabel $2g$ [b] at 119 73
\pinlabel $T_0$ at 100 29
\pinlabel $i$ [l] at 202 82
\pinlabel $j$ [tl] at 216 8
\pinlabel $i$ [b] at 201 10
\pinlabel $0$ [b] at 254 73
\pinlabel $j$ [b] at 273 73
\pinlabel $2g$ [b] at 291 73
\pinlabel $T_0$ at 273 29
\pinlabel $i$ [l] at 372 44
\pinlabel $0$ [b] at 425 73
\pinlabel $j$ [b] at 444 73
\pinlabel $2g$ [b] at 463 73
\pinlabel $T_0$ at 443 29
\endlabellist
\centering
\includegraphics[scale=0.7]{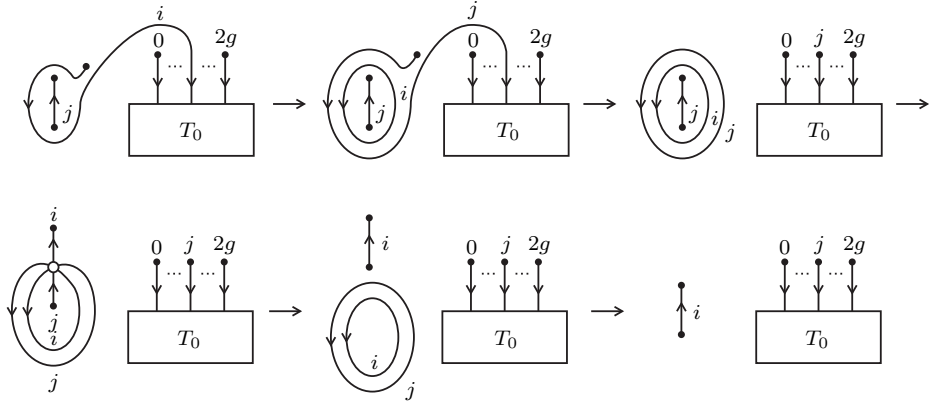}
\caption{Changing a label ($j=i+1$)}
\label{label}
\end{figure}

Thus we have a union $\Gamma_1$ of 
$n_0^-(\Gamma)$ copies of $L_0(1)$, 
$n_h^+(\Gamma)$ copies of $L_h$, 
$n_h^-(\Gamma)$ copies of $L'_h$, 
$|m_3(\Gamma)|$ copies of $R_3$ (or $R'_3$), 
$|m_4(\Gamma)|$ copies of $R_4$ (or $R'_4$), 
and $k$ copies of $\Gamma_0$ for some $k$. 
A similar argument implies that $\Gamma'\oplus N\Gamma_0$ is transformed into 
a union $\Gamma'_1$ of 
$n_0^-(\Gamma')$ copies of $L_0(1)$, 
$n_h^+(\Gamma')$ copies of $L_h$, 
$n_h^-(\Gamma')$ copies of $L'_h$, 
$|m_3(\Gamma')|$ copies of $R_3$ (or $R'_3$), 
$|m_4(\Gamma')|$ copies of $R_4$ (or $R'_4$), 
and $k'$ copies of $\Gamma_0$ for some $k'$ 
by chart moves of type W and chart moves of transition. 
By virtue of the conditions (i) and (ii) together with 
$n_0^+(\Gamma\oplus N\Gamma_0)=n_0^+(\Gamma_1)$, 
$n_0^+(\Gamma'\oplus N\Gamma_0)=n_0^+(\Gamma'_1)$, 
and the equality $(*)$, 
we conclude that $k=k'$ because of $n_0^+(\Gamma_0)\ne 0$. 
Hence $\Gamma_1$ and $\Gamma'_1$ have the same numbers of copies of 
$L_0(1)$, $L_h$, $L'_h$, and $\Gamma_0$, whereas they may have different numbers of 
copies of $R_3$ (or $R'_3$) and $R_4$ (or $R'_4$). 

We can suppose that $m_4(\Gamma)\geq m_4(\Gamma')$ without loss of generality. 
The number $m_4(\Gamma)-m_4(\Gamma')$ is even 
because of the equality $(*)$ for even $g$ and because of the condition (iii) for odd $g$. 
We put $2\ell:=m_4(\Gamma)-m_4(\Gamma')$. 
Taking $N$ large enough, we can assume that $k>4\ell (g+1)(2g+1)$. 
Applying births of $2\ell$ pairs of white vertices of type $\hat{r}_4^{\pm 1}$ 
to $4\ell (g+1)(2g+1)$ copies of $\Gamma_0$ included in $\Gamma_1$, 
we obtain $2\ell$ pairs of $R_4$ and $R'_4$. 
Since $2R_4$ is transformed into $(g+1)R_3$ by a sequence of chart moves of type W 
and chart moves of transition (see \cite[Lemma 4.1]{Endo2008}), 
we obtain $(g+1)\ell R_3$ from $2\ell R_4$. 
We thus have a new chart $\Gamma_2$. 
Removing pairs of $R_3$ and $R'_3$ and those of $R_4$ and $R'_4$ 
included in $\Gamma_2$ if necessary, 
$\Gamma_2$ and $\Gamma'_1$ have the same numbers of copies of 
$L_0(1)$, $L_h$, $L'_h$, $R_3$ (or $R'_3$), $R_4$ (or $R'_4$), and $\Gamma_0$ 
because of the equality $(*)$. 
Then $\Gamma_2$ is transformed into $\Gamma'_1$ by an ambient isotopy of $B$ 
relative to $b_0$, 
which means that $\Gamma\oplus N\Gamma_0$ is transformed into 
$\Gamma'\oplus N\Gamma_0$ by chart moves of type W, chart moves of transition, 
and ambient isotopies of $B$ relative to $b_0$. 
Therefore $f\# Nf_0$ is (strictly) $\mathcal{H}$--isomorphic to $f'\# Nf_0$ by Proposition 
\ref{isomorphism}. 

We next prove the `only if' part. 
Take a non-negative integer $N$ so that 
$(f\# Nf_0,\Phi)$ is $\mathcal{H}$--isomorphic to $(f'\# Nf_0,\Phi')$. 
Since an $\mathcal{H}$--isomorphism preserves numbers 
and types of vanishing cycles and the invariant $w$ (see Proposition \ref{inv}), 
we have $n_0^{\pm}(f\# Nf_0)=n_0^{\pm}(f'\# Nf_0)$, 
$n_h^{\pm}(f\# Nf_0)=n_h^{\pm}(f'\# Nf_0)$ for every $h=1,\ldots ,[g/2]$, 
and $w(f\# Nf_0,\Phi)=w(f'\# Nf_0,\Phi')$. 
The conditions (i), (ii), (iii) follows from additivity of $n_0^{\pm}, n_h^{\pm}, w$ 
under $\mathcal{H}$--fiber sum. 
\end{proof}

\begin{rem} 
Theorem \ref{stable} is also valid for any hyperelliptic Lefschetz fibration $(f_0,\Phi_0)$ 
which satisfies the following conditions: 
(i) $f_0$ is chiral and irreducible; 
(ii) the base space of $f_0$ is $S^2$; 
(iii) a Hurwitz system of $f_0$ with respect to $\Phi_0$ includes at least one 
$\zeta_i$ for every $i=1,\ldots ,2g+1$ 
(cf. Auroux \cite{Auroux2005} and \cite[Definition 4.1]{EHKT2014}). 
\end{rem}

\begin{rem}\label{as}
Auroux and Smith \cite[\S 2.1]{AS2008} remarked that 
the existence of a certain $\mathbb{Z}_2$-valued 
invariant and a stabilization theorem for hyperelliptic Lefschetz fibrations 
follow from a result of Kharlamov and Kulikov \cite{KK2003} about 
braid monodromy factorizations. 
Although they did not give any precise definition of the invariant, 
we expect that their invariant would be the same as the invariant $w$ and their theorem 
would be similar to (a part of) Theorem \ref{stable}. 
\end{rem}


\section{Examples and remarks}


In this section we exhibit examples of stabilizations of hyperelliptic Lefschetz fibrations 
and examples of pairs of hyperelliptic Lefschetz fibrations which are not stably equivalent, 
and make a few remarks. 
We do not specify hyperelliptic structures of Lefschetz fibrations in the following 
because they are obvious from given Hurwitz systems. 

We first show an example of non-isomorphic pair of irreducible chiral 
hyperelliptic Lefschetz fibrations which become isomorphic after one stabilization. 

\begin{exmp}
Let $g$ be an integer greater than one. 
We consider the following Hurwitz systems. 
\[
C_{\rm II} := (\zeta_1, \zeta_2, \ldots , \zeta_{2g})^{4g+2}, \quad 
I^{2g} := (\zeta_1,\zeta_2,\ldots,\zeta_{2g+1},\zeta_{2g+1},\ldots,\zeta_2,\zeta_1)^{2g}. 
\]
Let $f_{C_{\rm II}}:M_{C_{\rm II}}\rightarrow S^2$ and 
$f_{I^{2g}}:M_{I^{2g}}\rightarrow S^2$ be the hyperelliptic Lefschetz fibrations of genus $g$ 
determined by $C_{\rm II}$ and $I^{2g}$, respectively. 
$f_{I^{2g}}$ is nothing but the (untwisted) $\mathcal{H}$--fiber sum of $g$ copies 
of $f_0$ in Section \ref{section5}. 
Both $f_{C_{\rm II}}$ and $f_{I^{2g}}$ are irreducible, chiral 
and have $4g(2g+1)$ singular fibers. 
They are not $\mathcal{H}$--isomorphic 
because the images of their monodromy 
representations are different (see \cite[Example 4.14]{Endo2000}). 
Moreover the manifolds $M_{C_{\rm II}}$ and $M_{I^{2g}}$ are 
homeomorphic but not diffeomorphic by Freedman's theorem and a theorem of 
Usher \cite{Usher2006} (see \cite[Remark 4.9]{Endo2008}). 
In particular $f_{C_{\rm II}}$ and $f_{I^{2g}}$ are not isomorphic. 
On the other hand, the (untwisted) $\mathcal{H}$--fiber sums 
$f_{C_{\rm II}}\# f_0$ and $f_{I^{2g}}\# f_0$ are $\mathcal{H}$--isomorphic 
because a Hurwitz system of $f_{C_{\rm II}}\# f_0$ is transformed into 
that of $f_{I^{2g}}\# f_0$ by elementary transformations 
and simultaneous conjugations (see \cite[Lemma 4.1]{Endo2008}). 
\end{exmp}

We next show an example of non-isomorphic pair of chiral 
hyperelliptic Lefschetz fibrations with singular fiber of type ${\rm II}_{g/2}$ 
which become isomorphic after one stabilization. 

\begin{exmp}
Let $g$ be an even integer greater than one. 
We consider the following Hurwitz systems. 
\begin{align*}
PJ & := ((\zeta_1\zeta_2\cdots\zeta_{2g})^{2g+2}, 
(\zeta'_{g+1},\ldots ,\zeta'_3,\zeta'_2), \ldots ,(\zeta'_{2g},\ldots ,\zeta'_{g+2},\zeta'_{g+1}), \\
& \quad (\zeta_{g+1},\ldots ,\zeta_3,\zeta_2), \ldots ,(\zeta_{2g},\ldots ,\zeta_{g+2},\zeta_{g+1}), 
(\zeta_1,\zeta_2,\ldots ,\zeta_{2g})^{2g+1}), \\
{\rm where}\; & \; \zeta'_i:=(\zeta_g^{-1}\cdots\zeta_2^{-1}\zeta_1^{-1})^{g+1}\zeta_i
(\zeta_1\zeta_2\cdots \zeta_g)^{g+1}, \; {\rm and} \\
RI^{g-1} & := ((\zeta_g\cdots\zeta_2\zeta_1)^{2g+2}, 
\zeta_1^{-1}\zeta_2^{-1}\cdots\zeta_g^{-1}\zeta_{g+1}\zeta_g\cdots\zeta_2\zeta_1, \\
& \quad \zeta_2^{-1}\zeta_3^{-1}\cdots\zeta_{g+1}^{-1}\zeta_{g+2}
\zeta_{g+1}\cdots\zeta_3\zeta_2, \ldots ,
\zeta_{g+1}^{-1}\zeta_{g+2}^{-1}\cdots\zeta_{2g}^{-1}\zeta_{2g+1}
\zeta_{2g}\cdots\zeta_{g+2}\zeta_{g+1}, \\
& \quad 
(\zeta_1,\zeta_2,\ldots,\zeta_{2g+1},\zeta_{2g+1},\ldots,\zeta_2,\zeta_1)^{g-1} ). 
\end{align*}
Let $f_{PJ}:M_{PJ}\rightarrow S^2$ and 
$f_{RI^{g-1}}:M_{RI^{g-1}}\rightarrow S^2$ be the hyperelliptic Lefschetz fibrations of genus $g$ 
determined by $PJ$ and $RI^{g-1}$, respectively. 
Both $PJ$ and $RI^{g-1}$ are chiral and have $6g^2+2g+1$ singular fibers. 
One singular fiber is of type ${\rm II}_{g/2}$ and the others are of type ${\rm I}$. 
The manifolds $M_{PJ}$ and $M_{RI^{g-1}}$ are 
homeomorphic but not diffeomorphic (see \cite[Theorem 4.8]{Endo2008} for $g\geq 4$ 
and Sato \cite[Answer to Question 5.1]{Sato2013} for $g=2$). 
In particular $f_{PJ}$ and $f_{RI^{g-1}}$ are not isomorphic. 
On the other hand, the (untwisted) $\mathcal{H}$--fiber sums 
$f_{PJ}\# f_0$ and $f_{RI^{g-1}}\# f_0$ are $\mathcal{H}$--isomorphic 
because a Hurwitz system of $f_{PJ}\# f_0$ 
is transformed into that of $f_{RI^{g-1}}\# f_0$ by elementary transformations 
and simultaneous conjugations (see \cite[Theorem 4.10]{Endo2008}). 
\end{exmp}

We then show an example of pair of irreducible chiral 
hyperelliptic Lefschetz fibrations with the same number of singular fibers 
which are not stably isomorphic. 

\begin{exmp}\label{not1}
Let $g$ be an odd integer greater than one. 
We consider the following Hurwitz systems. 
\[
C_{\rm I} := (\zeta_1, \zeta_2, \ldots , \zeta_{2g+1})^{2g+2}, \quad 
I^{g+1} := (\zeta_1,\zeta_2,\ldots,\zeta_{2g+1},\zeta_{2g+1},\ldots,\zeta_2,\zeta_1)^{g+1}. 
\]
Let $f_{C_{\rm I}}:M_{C_{\rm I}}\rightarrow S^2$ and 
$f_{I^{g+1}}:M_{I^{g+1}}\rightarrow S^2$ be the hyperelliptic Lefschetz fibrations of genus $g$ 
determined by $C_{\rm I}$ and $I^{g+1}$, respectively. 
$f_{I^{g+1}}$ is nothing but the (untwisted) $\mathcal{H}$--fiber sum of $(g+1)/2$ copies 
of $f_0$ in Section \ref{section5}. 
Both $f_{C_{\rm I}}$ and $f_{I^{g+1}}$ are irreducible, chiral and have $2(g+1)(2g+1)$ 
singular fibers. 
$\hat{\mathcal{C}}$--charts corresponding to $f_{C_{\rm I}}$ and $f_{I^{g+1}}$ are 
$R_4$ and $((g+1)/2)R_3$ ($=((g+1)/2)\Gamma_0$), respectively 
(see Figure \ref{chartJ} and Figure \ref{chartH}). 
The values of the invariant $w$ for $f_{C_{\rm I}}$ and $f_{I^{g+1}}$ are computed as follows 
(see Definition \ref{w} and Definition \ref{w2}). 
\[
w(f_{C_{\rm I}})=w(R_4)=1, \quad 
w(f_{I^{g+1}})=w\left( \left(\frac{g+1}{2}\right)R_3\right)=\frac{g+1}{2}w(R_3)=0. 
\]
Therefore $f_{C_{\rm I}}$ and $f_{I^{g+1}}$ are not stably isomorphic 
by Theorem \ref{stable}. Since the images of monodromy representations of 
the (untwisted) $\mathcal{H}$--fiber sums $f_{C_{\rm I}}\# Nf_0$ and $f_{I^{g+1}}\# Nf_0$ 
coincide with $\mathcal{H}_g$ for any positive integer $N$, 
they are not even isomorphic by virtue of Proposition \ref{image}. 

Both manifolds $M_{C_{\rm I}}$ and $M_{I^{g+1}}$ are simply-connected and 
have the Euler characteristic $2(2g^2+g+3)$ and signature $-2(g+1)^2$. 
If $g\equiv 3\, ({\rm mod}\, 4)$, then $M_{I^{g+1}}$ is spin while $M_{C_{\rm I}}$ is not. 
Hence they are not homeomorphic. 
If $g\equiv 1\, ({\rm mod}\, 4)$, then both $M_{C_{\rm I}}$ and $M_{I^{g+1}}$ are not spin, 
and they are homeomorphic by Freedman's theorem. 
Since $M_{C_{\rm I}}$ has a $(-1)$--section and $M_{I^{g+1}}$ is a non-trivial fiber sum, 
they are not diffeomorphic by a theorem of Usher \cite{Usher2006}. 
The fact that $f_{C_{\rm I}}$ and $f_{I^{g+1}}$ are not isomorphic also follows from this 
observation, or from a result of Smith \cite{Smith2001'} and Stipsicz \cite{Stipsicz2001'}. 
\end{exmp}

We lastly show an example of pair of chiral 
hyperelliptic Lefschetz fibrations with the same numbers of singular fibers of each type 
which have a singular fiber of type ${\rm II}_{(g-1)/2}$ and are not stably isomorphic. 

\begin{exmp}\label{not2}
Let $g$ be an odd integer greater than one. 
We consider the following Hurwitz systems. 
\begin{align*}
Q & := ((\zeta_1,\zeta_2,\ldots ,\zeta_{2g+1})^{g+2}, 
(\zeta_{g-1}\cdots\zeta_2\zeta_1)^{2g}, (\zeta_{g-1},\ldots ,\zeta_2,\zeta_1)^2, \\
& \quad \zeta_1^{-1}\zeta_2^{-1}\cdots\zeta_{g-1}^{-1}\zeta_g\zeta_{g-1}\cdots\zeta_2\zeta_1, 
\zeta_2^{-1}\zeta_3^{-1}\cdots\zeta_g^{-1}\zeta_{g+1}\zeta_g\cdots \zeta_3\zeta_2, \\
& \quad \ldots ,\zeta_{g+2}^{-1}\zeta_{g+3}^{-1}\cdots \zeta_{2g}^{-1}\zeta_{2g+1}
\zeta_{2g}\cdots \zeta_{g+3}\zeta_{g+2}), \\
R & := (\zeta_1,\zeta_2,\ldots ,\zeta_{2g+1}, 
(\zeta_{g-1}\cdots\zeta_2\zeta_1)^{2g}, (\zeta_{g-1},\ldots ,\zeta_2,\zeta_1)^2, \\
& \quad \zeta_1^{-1}\zeta_2^{-1}\cdots\zeta_{g-1}^{-1}\zeta_g\zeta_{g-1}\cdots\zeta_2\zeta_1, 
\zeta_2^{-1}\zeta_3^{-1}\cdots\zeta_g^{-1}\zeta_{g+1}\zeta_g\cdots \zeta_3\zeta_2, \\
& \quad \ldots ,\zeta_{g+2}^{-1}\zeta_{g+3}^{-1}\cdots \zeta_{2g}^{-1}\zeta_{2g+1}
\zeta_{2g}\cdots \zeta_{g+3}\zeta_{g+2}, 
(\zeta_{2g+1},\ldots ,\zeta_2,\zeta_1)^{g+1}). 
\end{align*}
Let $f_Q:M_Q\rightarrow S^2$ and 
$f_R:M_R\rightarrow S^2$ be the hyperelliptic Lefschetz fibrations of genus $g$ 
determined by $Q$ and $R$, respectively. 
Both $f_Q$ and $f_R$ are chiral and have $2g^2+8g+3$ singular fibers. 
One singular fiber is of type ${\rm II}_{(g-1)/2}$ and the others are of type ${\rm I}$. 
From the constructions of $Q$ and $R$ shown in \cite[Lemma 3.4, Theorem 3.5]{Endo2008} 
together with correspondence between moves for Hurwitz systems and those for 
$\hat{\mathcal{C}}$--charts (see \cite[Lemma 20]{EK2013}), 
we can see that a $\hat{\mathcal{C}}$--chart corresponding to 
$f_Q$ has one white vertex of type $\hat{r}_4$ while 
that for $f_R$ has no white vertices of type $\hat{r}_4^{\pm 1}$. 
Thus we conclude that $w(f_Q)=1$ and $w(f_R)=0$, 
and $f_Q$ and $f_R$ are not stably isomorphic 
by Theorem \ref{stable}. Since the images of monodromy representations of the (untwisted) 
$\mathcal{H}$--fiber sums $f_Q\# Nf_0$ and $f_R\# Nf_0$ coincide with $\mathcal{H}_g$ 
for any positive integer $N$, 
they are not even isomorphic by virtue of Proposition \ref{image}. 

Both manifolds $M_Q$ and $M_R$ are simply-connected, non-spin and 
have the Euler characteristic $2g^2+4g+7$ and signature $-(g+1)^2$. 
Hence they are homeomorphic by Freedman's theorem. 
It does not seem to be known whether they are diffeomorphic. 
\end{exmp}

We end this section by making two remarks. 

\begin{rem}
For a pair of stably isomorphic hyperelliptic Lefschetz fibrations, 
it is not easy to determine the minimal number of copies of $f_0$ which 
make them $\mathcal{H}$--isomorphic. 
In particular, it does not seem to be known whether there exists a pair of 
stably isomorphic hyperelliptic Lefschetz fibrations 
which do not become $\mathcal{H}$--isomorphic after taking an $\mathcal{H}$--fiber sum 
with one copy of $f_0$. 
On the other hand, the following observation shows that 
there are many examples of non-$\mathcal{H}$-isomorphic 
hyperelliptic Lefschetz fibrations with the same base, the same fiber, 
and the same numbers of singular fibers of each type 
which become $\mathcal{H}$-isomorphic after one stabilization. 

Let $g$ be an integer greater than one and 
$B_1,\ldots ,B_r$ connected closed oriented surfaces. 
We consider a hyperelliptic Lefschetz fibration $f_i\co M_i\rightarrow B_i$ of genus $g$ 
for each $i\in\{1,\ldots ,r\}$, 
and (possibly different) $\mathcal{H}$-fiber sums $f$ and $f'$ of $f_1,\ldots ,f_r$. 
It is shown by the same argument as the proof of \cite[Proposition 4.10]{EHKT2014} 
that $\mathcal{H}$-fiber sums $f\# f_0$ and $f'\# f_0$ are isomorphic to each other. 
For example, various $\mathcal{H}$-fiber sums of (generalizations of) Matsumoto's fibration 
studied by Ozbagci and Stipsicz \cite{OS2000} and Korkmaz \cite{Korkmaz2001} 
become $\mathcal{H}$-isomorphic after one stabilization. 
\end{rem}

\begin{rem}
We consider the following Hurwitz systems in $\mathcal{H}_3$. 
\begin{align*}
U & := ((\zeta_1\zeta_2)^6, 
\zeta'_3,\zeta'_2,\zeta'_1,\zeta'_4,\zeta'_3,\zeta'_2,\zeta'_5,\zeta'_4,\zeta'_3,
\zeta'_6,\zeta'_5,\zeta'_4,\zeta'_7,\zeta'_6,\zeta'_5, \\
& \quad 
\zeta_3,\zeta_2,\zeta_1,\zeta_4,\zeta_3,\zeta_2,\zeta_5,\zeta_4,\zeta_3,
\zeta_6,\zeta_5,\zeta_4,\zeta_7,\zeta_6,\zeta_5,
\zeta_1,\ldots ,\zeta_7,\zeta_1,\ldots ,\zeta_7), \\
V & := ((\zeta_1\zeta_2)^6, 
\zeta'_3,\zeta'_2,\zeta'_1,\zeta'_4,\zeta'_3,\zeta'_2,\zeta'_5,\zeta'_4,\zeta'_3,
\zeta'_6,\zeta'_5,\zeta'_4,\zeta'_7,\zeta'_6,\zeta'_5, \\
& \quad 
\zeta_3,\zeta_2,\zeta_1,\zeta_4,\zeta_3,\zeta_2,\zeta_5,\zeta_4,\zeta_3,
\zeta_6,\zeta_5,\zeta_4,\zeta_7,\zeta_6,\zeta_5,
\zeta''_1,\ldots ,\zeta''_7, \zeta_1,\ldots ,\zeta_7), 
\end{align*}
where $\zeta'_i:=(\zeta_2^{-1}\zeta_1^{-1})^3\zeta_i(\zeta_1\zeta_2)^3$ and 
$\zeta''_i:=(\zeta_7^{-1}\cdots\zeta_1^{-1})^3\zeta_i(\zeta_1\cdots\zeta_7)^3$. 
Let $f_U:M_U\rightarrow S^2$ and 
$f_V:M_V\rightarrow S^2$ be the hyperelliptic Lefschetz fibrations of genus $3$ 
determined by $U$ and $V$, respectively. 
Both $f_U$ and $f_V$ are chiral and have $45$ singular fibers. 
One singular fiber is of type ${\rm II}_1$ and the others are of type ${\rm I}$. 
Both manifolds $M_U$ and $M_V$ are simply-connected, non-spin and 
have the Euler characteristic $37$ and signature $-25$. 
Hence they are homeomorphic by Freedman's theorem. 
$\hat{\mathcal{C}}$--charts corresponding to $f_U$ and $f_V$ are 
$\Gamma_U$ and $\Gamma_V$ depicted in Figure \ref{chartM} and Figure \ref{chartN}, 
respectively (see also Figure \ref{chartL}). 

\begin{figure}[ht!]
\labellist
\footnotesize \hair 2pt
\pinlabel $1$ [r] at 9 120
\pinlabel $2$ [r] at 19 120
\pinlabel $1$ [r] at 27 120
\pinlabel $2$ [r] at 36 120
\pinlabel $1$ [r] at 45 120
\pinlabel $2$ [r] at 53 120
\pinlabel $3$ [b] at 60 105
\pinlabel $2$ [b] at 69 105
\pinlabel $1$ [b] at 78 105
\pinlabel $7$ [b] at 111 105
\pinlabel $6$ [b] at 120 105
\pinlabel $5$ [b] at 129 105
\pinlabel $1$ [r] at 154 120
\pinlabel $2$ [r] at 163 120
\pinlabel $1$ [r] at 172 120
\pinlabel $2$ [r] at 180 120
\pinlabel $1$ [r] at 189 120
\pinlabel $2$ [r] at 197 120
\pinlabel $3$ [b] at 205 105
\pinlabel $2$ [b] at 213 105
\pinlabel $1$ [b] at 221 105
\pinlabel $7$ [b] at 256 105
\pinlabel $6$ [b] at 265 105
\pinlabel $5$ [b] at 273 105
\pinlabel $\Omega$ [b] at 70 53
\pinlabel $\Omega$ [b] at 212 53
\pinlabel $1$ [r] at 9 23
\pinlabel $7$ [r] at 43 23
\pinlabel $1$ [r] at 53 23
\pinlabel $7$ [r] at 86 23
\pinlabel $1$ [r] at 95 23
\pinlabel $7$ [r] at 128 23
\pinlabel $1$ [r] at 154 23
\pinlabel $7$ [r] at 188 23
\pinlabel $1$ [r] at 197 23
\pinlabel $7$ [r] at 230 23
\pinlabel $1$ [r] at 240 23
\pinlabel $7$ [r] at 273 23
\pinlabel $1$ [b] at 290 45
\pinlabel $7$ [b] at 323 45
\pinlabel $1$ [b] at 332 45
\pinlabel $7$ [b] at 366 45
\endlabellist
\centering
\includegraphics[scale=0.75]{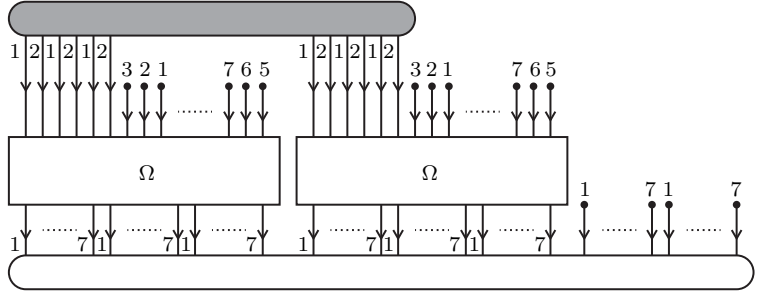}
\caption{Chart $\Gamma_U$}
\label{chartM}
\end{figure}

\begin{figure}[ht!]
\labellist
\footnotesize \hair 2pt
\pinlabel $1$ [r] at 9 120
\pinlabel $2$ [r] at 19 120
\pinlabel $1$ [r] at 28 120
\pinlabel $2$ [r] at 36 120
\pinlabel $1$ [r] at 45 120
\pinlabel $2$ [r] at 53 120
\pinlabel $3$ [b] at 60 105
\pinlabel $2$ [b] at 69 105
\pinlabel $1$ [b] at 78 105
\pinlabel $7$ [b] at 111 105
\pinlabel $6$ [b] at 120 105
\pinlabel $5$ [b] at 129 105
\pinlabel $1$ [r] at 197 120
\pinlabel $2$ [r] at 206 120
\pinlabel $1$ [r] at 214 120
\pinlabel $2$ [r] at 223 120
\pinlabel $1$ [r] at 232 120
\pinlabel $2$ [r] at 240 120
\pinlabel $3$ [b] at 248 105
\pinlabel $2$ [b] at 256 105
\pinlabel $1$ [b] at 264 105
\pinlabel $7$ [b] at 299 105
\pinlabel $6$ [b] at 308 105
\pinlabel $5$ [b] at 316 105
\pinlabel $\Omega$ [b] at 70 53
\pinlabel $\Omega$ [b] at 255 53
\pinlabel $1$ [b] at 145 45
\pinlabel $7$ [b] at 179 45
\pinlabel $1$ [r] at 9 23
\pinlabel $7$ [r] at 43 23
\pinlabel $1$ [r] at 53 23
\pinlabel $7$ [r] at 86 23
\pinlabel $1$ [r] at 95 23
\pinlabel $7$ [r] at 128 23
\pinlabel $1$ [r] at 197 23
\pinlabel $7$ [r] at 231 23
\pinlabel $1$ [r] at 240 23
\pinlabel $7$ [r] at 273 23
\pinlabel $1$ [r] at 283 23
\pinlabel $7$ [r] at 316 23
\pinlabel $1$ [b] at 332 45
\pinlabel $7$ [b] at 366 45
\endlabellist
\centering
\includegraphics[scale=0.75]{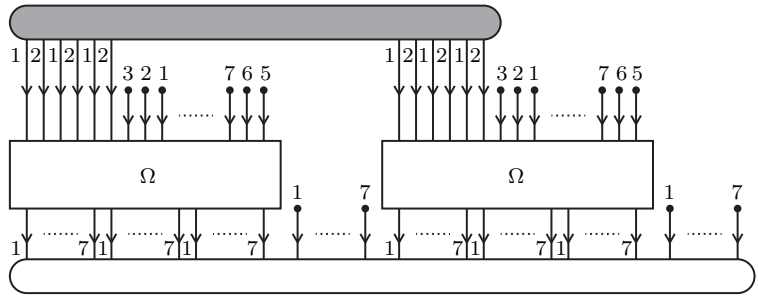}
\caption{Chart $\Gamma_V$}
\label{chartN}
\end{figure}

\begin{figure}[ht!]
\labellist
\footnotesize \hair 2pt
\pinlabel $\Omega$ [b] at 68 163
\pinlabel $=$ [b] at 160 165
\pinlabel $1$ [b] at 193 341
\pinlabel $2$ [b] at 205 341
\pinlabel $1$ [b] at 216 341
\pinlabel $2$ [b] at 227 341
\pinlabel $1$ [b] at 238 341
\pinlabel $2$ [b] at 250 341
\pinlabel $3$ [b] at 261 341
\pinlabel $2$ [b] at 272 341
\pinlabel $1$ [b] at 283 341
\pinlabel $4$ [b] at 294 341
\pinlabel $3$ [b] at 306 341
\pinlabel $2$ [b] at 317 341
\pinlabel $5$ [b] at 329 341
\pinlabel $4$ [b] at 340 341
\pinlabel $3$ [b] at 352 341
\pinlabel $6$ [b] at 363 341
\pinlabel $5$ [b] at 374 341
\pinlabel $4$ [b] at 385 341
\pinlabel $7$ [b] at 397 341
\pinlabel $6$ [b] at 409 341
\pinlabel $5$ [b] at 420 341
\pinlabel $1$ [t] at 193 0
\pinlabel $2$ [t] at 205 0
\pinlabel $3$ [t] at 216 0
\pinlabel $4$ [t] at 227 0
\pinlabel $5$ [t] at 238 0
\pinlabel $6$ [t] at 250 0
\pinlabel $7$ [t] at 261 0
\pinlabel $1$ [t] at 272 0
\pinlabel $2$ [t] at 283 0
\pinlabel $3$ [t] at 294 0
\pinlabel $4$ [t] at 306 0
\pinlabel $5$ [t] at 317 0
\pinlabel $6$ [t] at 329 0
\pinlabel $7$ [t] at 340 0
\pinlabel $1$ [t] at 352 0
\pinlabel $2$ [t] at 363 0
\pinlabel $3$ [t] at 374 0
\pinlabel $4$ [t] at 385 0
\pinlabel $5$ [t] at 397 0
\pinlabel $6$ [t] at 409 0
\pinlabel $7$ [t] at 420 0
\pinlabel $2$ [r] at 260 302
\pinlabel $3$ [r] at 294 268
\pinlabel $4$ [r] at 328 233
\pinlabel $5$ [r] at 362 199
\pinlabel $6$ [r] at 396 166
\pinlabel $3$ [r] at 238 195
\pinlabel $2$ [r] at 249 195
\pinlabel $1$ [r] at 261 195
\pinlabel $4$ [r] at 272 195
\pinlabel $3$ [r] at 284 184
\pinlabel $2$ [r] at 295 184
\pinlabel $5$ [r] at 306 184
\pinlabel $4$ [r] at 318 173
\pinlabel $3$ [r] at 329 173
\pinlabel $6$ [r] at 340 173
\pinlabel $5$ [r] at 352 162
\pinlabel $4$ [r] at 363 162
\pinlabel $7$ [r] at 375 162
\pinlabel $3$ [tr] at 286 291
\pinlabel $4$ [tr] at 319 258
\pinlabel $5$ [tr] at 354 224
\pinlabel $6$ [tr] at 388 190
\pinlabel $2$ [tr] at 286 257
\pinlabel $3$ [tr] at 320 223
\pinlabel $4$ [tr] at 354 190
\pinlabel $5$ [tr] at 388 156
\pinlabel $3$ [tr] at 250 157
\pinlabel $4$ [tr] at 273 134
\pinlabel $5$ [tr] at 295 113
\pinlabel $6$ [tr] at 319 89
\pinlabel $6$ [tl] at 337 88
\endlabellist
\centering
\includegraphics[scale=0.75]{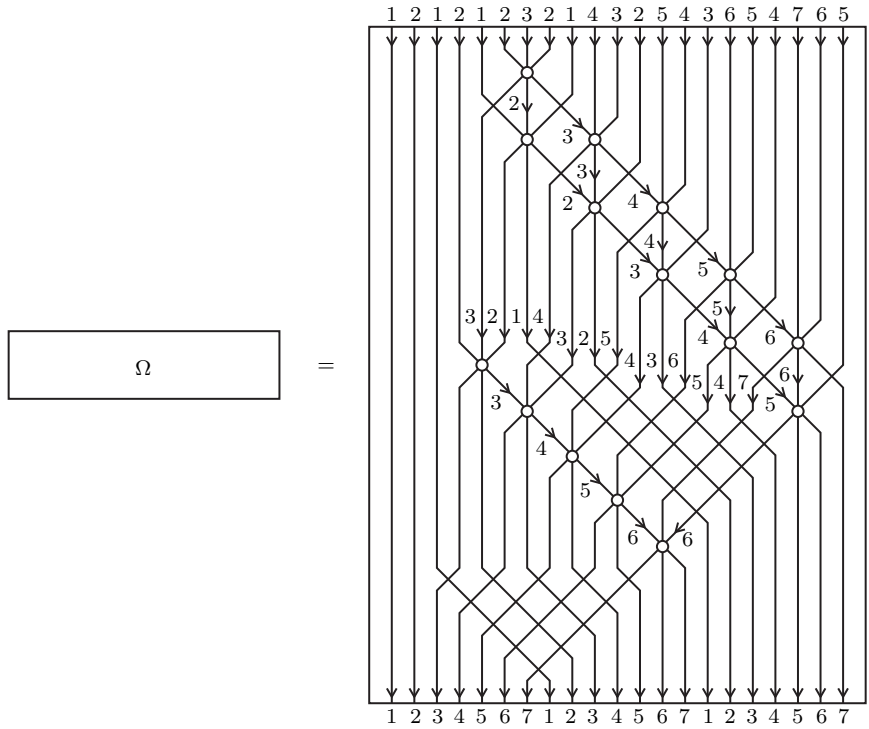}
\bigskip
\caption{Inside of the box labeled $\Omega$}
\label{chartL}
\end{figure}

Since both $\Gamma_U$ and $\Gamma_V$ have a white vertex of type $\hat{r}_4$, 
we conclude that $w(f_U)=w(f_V)=1$, 
and that $f_U$ and $f_V$ are stably isomorphic by Theorem \ref{stable}. 
In fact it is not difficult to see that the (untwisted) $\mathcal{H}$--fiber sums 
$f_U\# f_0$ and $f_V\# f_0$ are 
$\mathcal{H}$--isomorphic (and hence isomorphic by Proposition \ref{image}) 
by applying chart moves to $\Gamma_U\oplus \Gamma_0$ and 
$\Gamma_V\oplus \Gamma_0$. 
It is not clear whether $f_U$ and $f_V$ are $\mathcal{H}$--isomorphic, 
and whether $M_U$ and $M_V$ are diffeomorphic. 
It would be worth noting that $M_U$ and $M_V$ can not be distinguished 
by Usher's theorem \cite{Usher2006} 
because both $f_U$ and $f_V$ have a $(-1)$--section. 
Two Hurwitz systems $U$ and $V$ are related by a `partial twisting' operation, 
namely, $V$ is obtained from $U$ by replacing $(\zeta_1,\ldots ,\zeta_7)$ with 
$(\zeta''_1,\ldots ,\zeta''_7)$. 
Such pairs of Hurwitz systems often yield pairs of $4$--manifolds with subtle difference 
(see works of Auroux \cite{Auroux2006} and Yasui \cite{Yasui2014}). 
\end{rem}


\subsection*{Acknowledgements}\label{ackref}


The first author would like to thank Kokoro Tanaka and Isao Hasegawa 
for helpful discussions on charts and central extensions. 
The authors would like to thank the referee for helpful suggestions and comments. 
The first author was partially supported by JSPS KAKENHI Grant Numbers 
21540079, 25400082, 16K05142. 
The second author was partially supported by JSPS KAKENHI Grant Numbers 
21340015, 26287013.

\end{document}